\documentclass[a4paper,11pt]{amsart}

\usepackage{hyperref}
\hypersetup{backref,colorlinks=true,allcolors=black}
\usepackage[foot]{amsaddr}
\usepackage{mathrsfs}
\usepackage{enumerate}
\usepackage{dsfont}
\usepackage{mathtools}
\usepackage{amssymb}
\usepackage{xcolor}
\usepackage{bm}
\theoremstyle{plain}
\newtheorem{theorem}{Theorem}[section]
\newtheorem{proposition}[theorem]{Proposition}
\newtheorem{lemma}[theorem]{Lemma}
\newtheorem{corollary}[theorem]{Corollary}
\theoremstyle{definition}
\newtheorem{definition}[theorem]{Definition}
\theoremstyle{remark}
\newtheorem{remark}[theorem]{Remark}
\newtheorem{assumption}[theorem]{Assumption}

\newtheorem*{acknow*}{Acknowledgments}

\newcommand{\ignore}[1]{}
\newcommand{\R}{\mathbb{R}}
\newcommand{\N}{\mathbb{N}}

\newcommand{\norm}[1]{\left\lVert #1 \right\rVert}
\newcommand{\abs}[1]{\left\vert #1 \right\rvert}

\newcommand{\E}{\mathbb{E}}
\newcommand{\Var}{\operatorname{Var}}
\newcommand{\Law}{\mathcal{L}}
\newcommand{\Ptwo}{\mathcal{P}_2\big(\R^d\big)}

\allowdisplaybreaks[3]

\author{Solesne Bourguin}
\email[Solesne Bourguin]{bourguin@math.bu.edu}
\thanks{S.B. was partially supported by the Simons Foundation Award
  635136.}
\author{Konstantinos Spiliopoulos}
\address{Boston University, Department of Mathematics and Statistics\\ 665 Commonwealth Ave, Boston, MA 02215, USA}
\email[Konstantinos Spiliopoulos]{kspiliop@math.bu.edu}
\thanks{K.S. was partially supported by NSF DMS-2311500}

\title[Uniform-in-time quantitative
  fluctuations of large scale IPS]{Uniform-in-time quantitative fluctuations of large scale
interacting particle systems}
\begin{document}
\begin{abstract} We study fluctuations of mean-field interacting particle systems around
their McKean--Vlasov limit. Our main result provides a uniform-in-time
quantitative central limit theorem for the fluctuation process, with
convergence rate of order $N^{-1/2}$ to the corresponding Gaussian
limit in the Wasserstein metric. The proof relies on two main
ingredients. First, we establish a uniform-in-time weak expansion for
specific functionals of the empirical measure around their limiting
behavior. This yields, in particular, uniform-in-time control of the
convergence of the prelimit variance to its limiting counterpart. We
also derive a backward PDE representation of the limiting variance,
which is of independent interest. Second, we use Malliavin calculus
tools and, in particular, a second-order Poincar\'e inequality that
bounds the Wasserstein distance between the fluctuation process and its
Gaussian limit in terms of the first- and second-order Malliavin
derivatives of the particle flow. The quantitative convergence rates
then follow from a delicate analysis of these derivatives, yielding
the sharp
estimates required for uniform-in-time control. 
\end{abstract}	
\bibliographystyle{amsalpha}

	\maketitle

        \section{Introduction}\label{S:Introduction}

The objective of this paper is to obtain uniform-in-time quantitative
control of fluctuations for mean-field interacting particle systems
around their McKean--Vlasov limit. More specifically, the aim is to
understand not only the limiting behavior of the empirical measure as
the number of particles tends to infinity, but also the size and
structure of the fluctuations around this limit, and to do so in a way
that remains stable over long time horizons. More precisely, fix $T > 0$, let
$\left( B^1, \dots, B^N \right)$ be independent $\R^m$-valued Brownian
motions, and let $\left( X_0^{i,N} \right)_{i=1}^N$ be i.i.d.\
$\R^d$-valued random variables with common law $\nu$, independent of
the Brownian motions. The mean-field interacting particle system under
consideration is given by
\begin{equation}
\label{eq:IPS}
dX_t^{i,N}
=
b\left( X_t^{i,N}, \mu_t^N \right) dt
+
\sigma\left( X_t^{i,N}, \mu_t^N \right) dB_t^i,
\qquad
X_0^{i,N} \sim \nu,
\quad
i = 1, \dots, N,
\end{equation}
where
\begin{equation*}
\mu_t^N
=
\frac{1}{N}\sum_{k=1}^N \delta_{X_t^{k,N}}
\end{equation*}
is the empirical measure associated with the particle configuration at
time $t$. The asymptotic behavior of systems of the form \eqref{eq:IPS} has been
studied extensively, both for its intrinsic mathematical interest and
for its many applications. On finite time intervals, that is, when
$T < \infty$, law-of-large-numbers results describe the limiting
behavior of the empirical measure process
$\left\{ \mu_t^N, t \in [0, T] \right\}_{N \in \mathbb{N}}$. A far from
exhaustive list of classical references on propagation of chaos for
systems of the form \eqref{eq:IPS} is
\cite{Oel84,G88,Szn91}. These works show that the sequence
$\left\{ \mu_t^N, t \in [0, T] \right\}_{N \in \mathbb{N}}$ converges,
in the space of probability measures on
$C\left( [0, T]; \R^d \right)$, to a limiting measure $\mu_t$,
characterized as the law of the McKean--Vlasov process $X_t$ solving
\begin{equation}
\label{eq:IPSlimit}
dX_t
=
b\left( X_t, \mu_t \right) dt
+
\sigma\left( X_t, \mu_t \right) dB_t, \qquad \mu_{t}=\Law\left( X_t \right), \qquad
X_0 \sim \nu.
\end{equation}
Systems of the form \eqref{eq:IPS} arise in a wide range of contexts,
including engineering, finance, machine learning, economics, and
biology -- see, for example,
\cite{BezemekSpiliopoulos1,CDreview,DGPS23,IMS22,SirSpi20a,Spi15}.

One important refinement of the law-of-large-numbers theory is
uniform-in-time propagation of chaos. In that setting, one asks whether
the convergence of the law of the particle system \eqref{eq:IPS} to the
law of the McKean--Vlasov limit \eqref{eq:IPSlimit} holds not only on a
fixed interval $\left[ 0, T \right]$, but on the whole half-line
$\left[ 0, \infty \right)$. This question is especially natural in
dissipative regimes, where the limiting McKean--Vlasov dynamics is
expected to remain stable at large times. A number of works have
successfully addressed this problem -- see
\cite{Malrieu2001,DEGZ18,LackerLeFlem2023,SchSou24}.

Complementary to this literature is the fluctuation analysis of such
systems. While the law of large numbers describes the first-order
behavior of the empirical measure, it does not capture the stochastic
error that remains at the next order. One is therefore naturally led to
study the asymptotic behavior, as $N \to \infty$, of the signed
fluctuation measure
$\sqrt{N}\left( \mu_t^N - \mu_t \right)$. On compact time intervals,
fluctuation central limit theorems go back, in particular, to the work
of Dawson \cite{Dawson83}, and were further developed for
McKean--Vlasov systems in the classical work \cite{FM1997}. Subsequent
related studies have treated a variety of application domains -- see, for
example, \cite{DGPS23,HIMS24,SirSpi20b,Spi15}.

The long-time fluctuation problem contains an additional subtlety. Two
limiting procedures are involved: one may first let $N \to \infty$ at
fixed time and then study the behavior of the limiting fluctuation
process as $t \to \infty$, or one may first analyze the long-time
behavior of the finite particle system and only afterwards let
$N \to \infty$. In general, these two procedures need not be
compatible. This is already visible in Dawson's work \cite{Dawson83}
where it is proved that away from the critical regime, the usual central limit scaling leads to
Gaussian fluctuations, whereas at the critical point the fluctuations
are non-Gaussian and occur on a different scale. Related obstructions
also appear in the recent work \cite{DGPS23}, where the absence of
phase transitions, nondegeneracy of functional inequalities,
uniform-in-time propagation of chaos, and Gaussianity of equilibrium
fluctuations are shown to be closely related. Thus, one should not necessarily
expect uniform-in-time Gaussian fluctuation bounds in regimes where the
McKean--Vlasov dynamics admits several competing long-time states, or
where the linearized dynamics around equilibrium fails to have a
spectral gap.

The present paper is carried out precisely in such a stable regime where the limiting fluctuations are expected to be Gaussian. For
a smooth test function $\varphi \in C_b^7\left( \R^d \right)$, we study
the fluctuations of the empirical average
$\left\langle \mu_t^N, \varphi \right\rangle$ around its
McKean--Vlasov limit $\left\langle \mu_t, \varphi \right\rangle$, and
we seek quantitative bounds that hold uniformly in time. To this end,
we introduce the fluctuation process
\begin{equation*}
G_t^N\left( \varphi \right)
=
\sqrt{N}
\left(
\left\langle \mu_t^N, \varphi \right\rangle
-
\left\langle \mu_t, \varphi \right\rangle
\right).
\end{equation*}
Our main result gives concrete conditions under which the convergence of
$G_t^N\left( \varphi \right)$ to its Gaussian limit can be quantified
uniformly in time, with an explicit rate in the Wasserstein metric.

The assumptions of the paper for the general model (\ref{eq:IPS}) exclude the critical or
multi-phase behavior described above. More precisely, the dissipativity
and smallness conditions in Assumption~\ref{ass:coeff-level-new} imply
exponential decay of the derivatives of the decoupled McKean--Vlasov
flow, and this decay is isolated later in the more intrinsic flow-level
Assumption~\ref{ass:flow-level-new-abstract}. This is the analytic
mechanism that makes the large-particle and long-time limits commute in
our setting: the linearized McKean--Vlasov dynamics remains stable, the
weak expansion coefficients are uniformly controlled, and the
Malliavin-derivative estimates decay in time. The uniform Gaussian
approximation proved below should therefore be understood as a result in
a stable single-phase regime.

At a high level, the contribution of this paper is two-fold. First, we
quantitatively characterize the convergence of fluctuations for the
particle system \eqref{eq:IPS}, with precise convergence rates that hold
uniformly in time in the Wasserstein metric. Second, in order to
establish this result, we exploit a deep connection with Malliavin
calculus
\cite{nualart_malliavin_2006}, in a form
that is sufficiently robust to be useful more generally in settings
where the limiting fluctuations are of Gaussian nature.

At a more concrete mathematical level, the proof of the uniform-in-time
bounds rests on a number of carefully derived approximations and
estimates. The first main ingredient is a uniform-in-time weak expansion
for the specific functionals of the empirical measure that arise in the
fluctuation analysis. For this, we build on the fixed-time weak
expansion results of \cite{CST2022}, and show how to control the
relevant terms uniformly in time. This yields, in particular,
uniform-in-time convergence of the prelimit variance to its limiting
counterpart, and also leads to a backward PDE representation of the
limiting variance; see Sections \ref{sec:uniform-time} and
\ref{limiting-variance-section}. The second main ingredient is a
quantitative Gaussian approximation based on Malliavin calculus. More
precisely, we use a second-order Poincar\'e inequality
\cite{nourdin_second_2009,Vidotto2020}, which
bounds the Wasserstein distance to the Gaussian limit in terms of the
first- and second-order Malliavin derivatives of the particle system.
The key point is to establish estimates on these derivatives that decay
exponentially in time. This is done in the technical Section \ref{sec:malliavin}, and it is precisely this decay that
ultimately yields the desired uniform-in-time control.

An important motivating class of examples is the classical one-body and
two-body potential model with constant diffusion coefficient,
\begin{equation}\label{eq:convex-potential-model}
b\left( x,\mu \right)
=
-\nabla U\left( x \right)
-
\int_{\R^d} \nabla W\left( x-y \right)\mu\left( dy \right),
\qquad
\sigma\left( x,\mu \right) = \Sigma.
\end{equation}
This model plays a central role in
the literature on uniform-in-time propagation of chaos; see, for
example, \cite{Dawson83,DEGZ18,DGPS23,LackerLeFlem2023,Malrieu2001}. In the case of
\eqref{eq:convex-potential-model}, one typically assumes that the confining potential
$U$ is convex and that $W$ is convex and even. In particular, this means that the drift
typically has linear growth and therefore falls outside the bounded-coefficient framework in which the
general part of the present paper is developed. 

This limitation is not specific to the present work, but reflects a
broader feature of the subject. Results formulated for general
coefficients $b$ and $\sigma$ are typically proved under boundedness
assumptions, or under closely related uniform regularity conditions,
whereas analyses devoted to a specific model typically exploit the
additional structure of that model in an essential way in order to go
beyond the general bounded framework. Thus, one cannot in general
expect a theorem proved in full generality to apply automatically to a specific model simply by specializing the coefficients.

Nevertheless, as the arguments of the present paper make clear, the main ingredients
needed to treat the model \eqref{eq:convex-potential-model} are 
a second-order weak expansion adapted to the relevant empirical-measure
functionals, and long-time decay estimates for the derivatives of the
flow. The unbounded convex-potential regime is left for future research, because
its particular structure requires weighted regularity and weak-expansion estimates that
are best treated in a model-specific analysis rather than folded into a
general theorem. Investigation of this question is under way.

The rest of the paper is organized as follows. In
Section~\ref{S:MainResults}, we present the precise formulation of the
problem, the assumptions, and the statement of the main result,
Theorem~\ref{thm:mainCLT}. Section~\ref{S:ProofMainTheorem} gives the
structural proof of Theorem~\ref{thm:mainCLT}, assuming the main
technical ingredients proved later in the paper.  The technical ingredients are then established in Sections
\ref{sec:uniform-time}, \ref{limiting-variance-section}, and
\ref{sec:malliavin}. In Section~\ref{sec:uniform-time},
we prove the uniform-in-time weak expansion of the empirical measure
around its limit in Proposition
\ref{prop:uniform-time-weak-expansion-abs-assump}. In
Section~\ref{limiting-variance-section}, we analyze the variance of the
limiting fluctuations and derive its backward PDE representation, which
we believe to be of independent interest. In
Section~\ref{sec:malliavin}, we prove the uniform-in-time
bounds on the first- and second-order Malliavin derivatives stated in
Propositions~\ref{prop:firstbounds-uniform} and
\ref{prop:secondbounds-uniform}.

\section{Formulation, Assumptions and Main Results}\label{S:MainResults}

We now formulate the assumptions and state the main result. We first fix the
Wasserstein notation used throughout the paper. For probability measures
$\mu,\eta \in \mathcal P_2\left(\R^d\right)$, we denote by
$W_2\left(\mu,\eta\right)$ the usual $2$-Wasserstein distance on
$\mathcal P_2\left(\R^d\right)$. If $Y$ and $Z$ are real-valued integrable
random variables, we write
\begin{equation*}
W_1\left(Y,Z\right)
=
W_1\left(\mathcal L\left(Y\right),\mathcal L\left(Z\right)\right)
=
\sup_{\operatorname{Lip}\left(h\right)\le 1}
\left|
\E\left[h\left(Y\right)\right]
-
\E\left[h\left(Z\right)\right]
\right|.
\end{equation*}
Thus, in the statement of the main theorem, $W_1$ always denotes the
$1$-Wasserstein distance between the laws of the corresponding real-valued
random variables.

Fix $\varphi \in C_b^7\left( \R^d \right)$. We study the fluctuations of the
empirical average $\left\langle \mu_t^N,\varphi \right\rangle$ around its
McKean--Vlasov limit $\left\langle \mu_t,\varphi \right\rangle$. The corresponding
fluctuation process is defined to be
\begin{equation*}
G_t^N\left( \varphi \right)
=
\sqrt{N}
\left(
\left\langle \mu_t^N,\varphi \right\rangle
-
\left\langle \mu_t,\varphi \right\rangle
\right).
\end{equation*}
Under the assumptions below, these fluctuations are asymptotically Gaussian. More
precisely, we identify a limiting variance $\sigma_t^2\left(\varphi\right)$ and prove
that $G_t^N\left(\varphi\right)$ converges to
$\mathcal N\left(0,\sigma_t^2\left(\varphi\right)\right)$ in $W_1$, with rate
$N^{-1/2}$ uniformly over $t\ge 0$.

\subsection{Measure derivatives and regularity classes}
We recall the notation for the measure derivatives and regularity classes used
throughout the paper. All derivatives with respect to the measure variable are
understood in the sense of Lions. If
$f \colon \R^d \times \Ptwo \to E$, where $E$ is a finite-dimensional Euclidean
space, we write
\begin{equation*}
\partial_\mu^n f\left(x,\mu,v_1,\dots,v_n\right)
\end{equation*}
for its $n$-th Lions derivative with respect to the measure variable, evaluated at
$v_1,\dots,v_n \in \R^d$. We also use derivatives with respect to the spatial
variables $v_1,\dots,v_n$ appearing in the Lions derivatives. Thus, for
$n,\ell \in \N_0$ and $\beta=\left(\beta_1,\dots,\beta_n\right)\in\N_0^n$, we set
\begin{equation*}
D^{\left(n,\ell,\beta\right)}
f\left(x,\mu,v_1,\dots,v_n\right)
=
\partial_{v_n}^{\beta_n}\cdots
\partial_{v_1}^{\beta_1}
\partial_x^\ell
\partial_\mu^n
f\left(x,\mu,v_1,\dots,v_n\right),
\end{equation*}
whenever this derivative is well-defined. Its order is $\left|\left(n,\ell,\beta\right)\right|
=
n+\ell+\beta_1+\cdots+\beta_n$. If $\Phi \colon \Ptwo \to \R$ depends only on the measure variable, we use the
analogous notation
\begin{equation*}
D^{\left(n,\beta\right)}
\Phi\left(\mu,v_1,\dots,v_n\right)
=
\partial_{v_n}^{\beta_n}\cdots
\partial_{v_1}^{\beta_1}
\partial_\mu^n
\Phi\left(\mu,v_1,\dots,v_n\right),
\end{equation*}
with $\left|\left(n,\beta\right)\right|
=
n+\beta_1+\cdots+\beta_n$. For $k\in\N$, we say that
$f\in C_{b,\mathrm{Lip}}^{k,k}\left(\R^d\times\Ptwo;E\right)$ if, for every
multi-index $\left(n,\ell,\beta\right)$ such that
$\left|\left(n,\ell,\beta\right)\right|\le k$, the derivative
$D^{\left(n,\ell,\beta\right)}f$ exists, is bounded, and is globally Lipschitz in
all its variables. More precisely, there exists a constant $C>0$ such that, for all
$x,x',v_1,v_1',\dots,v_n,v_n'\in\R^d$ and all $\mu,\mu'\in\Ptwo$,
\begin{align*}
&\norm{
D^{\left(n,\ell,\beta\right)}
f\left(x,\mu,v_1,\dots,v_n\right)
-
D^{\left(n,\ell,\beta\right)}
f\left(x',\mu',v_1',\dots,v_n'\right)
  }\\
  &\qquad\qquad\qquad\qquad\qquad\qquad\qquad\qquad\qquad
  \le
C
\left(
\abs{x-x'}
+
W_2\left(\mu,\mu'\right)
+
\sum_{r=1}^n \abs{v_r-v_r'}
\right),
\end{align*}
and
\begin{equation*}
\sup_{x,\mu,v_1,\dots,v_n}
\left\|
D^{\left(n,\ell,\beta\right)}
f\left(x,\mu,v_1,\dots,v_n\right)
\right\|
\le C.
\end{equation*}
When the target space $E$ is clear from the context, we simply write
$C_{b,\mathrm{Lip}}^{k,k}\left(\R^d\times\Ptwo\right)$.

We shall also use the regularity classes $M^k$ appearing in \cite{CST2022}. For
$k\in\N$, we say that $f\in M^k\left(\R^d\times\Ptwo;E\right)$ if the derivatives
$D^{\left(n,\ell,\beta\right)}f$ exist for all
$\left|\left(n,\ell,\beta\right)\right|\le k$ and satisfy the boundedness and
global Lipschitz estimates above. We denote by
$\norm{f}_{M^k\left(\R^d\times\Ptwo\right)}$ the smallest admissible constant in
these bounds. If $\Phi\colon\Ptwo\to\R$, we say that
$\Phi\in M^k\left(\Ptwo\right)$ if, viewing $\Phi$ as a function on
$\R^d\times\Ptwo$ independent of the first variable, the corresponding derivatives
$D^{\left(n,\beta\right)}\Phi$ of order at most $k$ are bounded and globally
Lipschitz. The corresponding norm is denoted by
$\norm{\Phi}_{M^k\left(\Ptwo\right)}$.
\begin{remark}
  The two notations are kept for clarity: $C_{b,\mathrm{Lip}}^{k,k}$ is used for
coefficient-level assumptions, following the convention of \cite{CrisanMcMurray2018},
whereas $M^k$ is used for the measure functionals appearing in the weak expansion
argument of \cite{CST2022}.
\end{remark}

\subsection{Assumptions}

We are now ready to state the assumptions under which the uniform-in-time
quantitative central limit theorem holds.
\begin{assumption}[Global Lipschitz well-posedness]
\label{ass:smooth}
The initial law satisfies $\nu \in \Ptwo$. Moreover, the coefficients
\begin{equation*}
b \colon \R^d \times \mathcal P_2\left( \R^d \right) \to \R^d\quad\mbox{and}\quad
\sigma \colon \R^d \times \mathcal P_2\left( \R^d \right) \to \R^{d \times m}
\end{equation*}
are globally Lipschitz, that is, there exists a constant $ L > 0 $ such that, for all
$ x,y \in \R^d $ and all $ \mu,\eta \in \mathcal P_2\left( \R^d \right) $,
\begin{equation*}
\abs{b\left( x,\mu \right) - b\left( y,\eta \right)}
+
\norm{\sigma\left( x,\mu \right) - \sigma\left( y,\eta \right)}
\le
L \left( \abs{x-y} + W_2\left( \mu,\eta \right) \right).
\end{equation*}
\end{assumption}

\begin{remark}
Under Assumption~\ref{ass:smooth}, for every $ N \ge 1 $, the interacting particle
system \eqref{eq:IPS} is well posed on $ \left[ 0,T \right] $. Moreover, the
McKean--Vlasov SDE
\begin{equation}
  \label{eq:MV}
dX_s
=
b\left( X_s,\mu_s \right) ds
+
\sigma\left( X_s,\mu_s \right) dB_s,
\qquad
\mu_s = \Law\left( X_s \right),
\qquad
X_0 \sim \nu,
\end{equation}
admits a unique strong solution on $ \left[ 0,T \right] $, and the flow
$ \left( \mu_s \right)_{s \in \left[ 0,T \right]} $ is deterministic. In what follows, we will write $a\left(x,\mu\right)=\sigma\left(x,\mu\right)\sigma\left(x,\mu\right)^\top\in\R^{d\times d}$.
\end{remark}

\begin{assumption}[Coefficient regularity]
\label{ass:second-malliavin-uniform}
Assume the following.
\begin{enumerate}
\item[(i)] The first-order derivatives $\partial_x b$, $\partial_x
  \sigma$, $\partial_\mu b$ and $\partial_\mu \sigma$ exist, are jointly continuous in all variables, and are uniformly bounded on
$ \R^d \times \Ptwo $.

\item[(ii)] The second-order derivatives $\partial_{xx}^2 b$,
$\partial_\mu\!\left[ \partial_x b \right]$,
$\partial_x\!\left[ \partial_\mu b \right]$,
$\partial_v\!\left[ \partial_\mu b \right]$,
$\partial_{\mu\mu}^2 b$, and the analogous derivatives of $\sigma$ exist, are jointly continuous in all
variables, and are uniformly bounded. We denote
\begin{align*}
M_2
&=
\sup
\Bigg(
\norm{\partial_{xx}^2 b}
+
\norm{\partial_\mu\!\left[ \partial_x b \right]}
+
\norm{\partial_x\!\left[ \partial_\mu b \right]}
+
\norm{\partial_v\!\left[ \partial_\mu b \right]}
+
\norm{\partial_{\mu\mu}^2 b}
  \Bigg)\\
  &\quad
    +
\sup
\Bigg(
\norm{\partial_{xx}^2 \sigma}
+
\norm{\partial_\mu\!\left[ \partial_x \sigma \right]}
+
\norm{\partial_x\!\left[ \partial_\mu \sigma \right]}
+
\norm{\partial_v\!\left[ \partial_\mu \sigma \right]}
+
\norm{\partial_{\mu\mu}^2 \sigma}
\Bigg)
<
\infty.
\end{align*}
\end{enumerate}
\end{assumption}

For $\mu \in \Ptwo$, let $\left(\mu_t^\mu\right)_{t\ge 0}$ denote the
McKean--Vlasov law flow started from $\mu$. For $x\in \R^d$, we define the
decoupled flow associated with this law flow by
\begin{equation}
  \label{eq:def-decoupl-mcvlasov-flow}
X_t^{x,\mu}
=
x
+
\int_0^t b\left( X_s^{x,\mu},\mu_s^\mu \right) ds
+
\int_0^t \sigma\left( X_s^{x,\mu},\mu_s^\mu \right) dB_s .
\end{equation}

\begin{assumption}[Flow-level regularity]
\label{ass:flow-smothness}
Let $\left( X_t^{x,\mu} \right)_{t \ge 0}$ be the decoupled flow
defined in \eqref{eq:def-decoupl-mcvlasov-flow}. We assume that
for all multi-indices $\gamma$, $\beta$ and every integer $n \ge 0$
such that $|\gamma| + |\beta| + n \le 7$, the mixed derivative $\partial_x^\gamma \partial_v^\beta \partial_\mu^n
X_t^{x,\mu}\left( v_1,\dots,v_n \right)$ exists for every $t \ge 0$, admits a jointly continuous version in
$\left( t,x,\mu,v_1,\dots,v_n \right)$, and belongs to
$ L^p\left( \Omega \right) $ for every $p \ge 1$.
\end{assumption}

The preceding assumptions provide the regularity needed to differentiate the
McKean--Vlasov flow and to apply the weak expansion and Malliavin calculus
arguments. For the estimates to be uniform in time, however, regularity alone is not
enough. We also need a long-time stability mechanism ensuring that perturbations of
the initial condition and of the initial law are damped as time evolves. The following
dissipativity assumption is a coefficient-level condition designed to provide this
stability: it yields exponential decay of the relevant derivatives of the decoupled
McKean--Vlasov flow, which is the key input for the uniform-in-time estimates proved
later.
\begin{assumption}[Dissipative regime]
\label{ass:coeff-level-new}
Assume that
$b,\sigma \in C_{b,\mathrm{Lip}}^{7,7}\left( \R^d \times \Ptwo
\right)$. Define the set $\mathcal P = \left\{2,4,6,8,10,12,14
\right\}$ and assume the following.
\begin{enumerate}
\item[(i)] For every $ p \in \mathcal P $, there exists $ \kappa_p > 0 $ such that, for all
$ x,y \in \R^d $ and all $ \mu \in \Ptwo $,
\begin{equation}
\label{eq:uniform-time-monotonicity-p}
2 \left\langle x-y, b\left( x,\mu \right) - b\left( y,\mu \right) \right\rangle
+
\left( p-1 \right)\norm{\sigma\left( x,\mu \right) - \sigma\left( y,\mu \right)}^2
\le
-\kappa_p \abs{x-y}^2.
\end{equation}

\item[(ii)] There exists $ \gamma \in \left( 0,1 \right] $ such that
\begin{equation}
\label{eq:small-measure-dependence}
\sup_{x,\mu,y}
\left(
\abs{\partial_\mu b\left( x,\mu \right)\left( y \right)}
+
\norm{\partial_\mu \sigma\left( x,\mu \right)\left( y \right)}
\right)
\le \gamma.
\end{equation}

\item[(iii)] Writing $M_\sigma = \sup_{x,\mu} \norm{\partial_x
    \sigma\left( x,\mu \right)}$, assume that
\begin{equation}
\label{eq:def-omega-uniform-time}
\omega
=
\min_{p \in \mathcal P}
\left(
\frac{\kappa_p}{2}
-
2^{\frac{p-1}{p}} \gamma
-
\left( p-1 \right) 2^{\frac{p-1}{p}} M_\sigma \gamma
-
\frac{p-1}{2} 2^{\frac{2\left( p-1 \right)}{p}} \gamma^2
\right)
> 0.
\end{equation}
\end{enumerate}
\end{assumption}

The preceding dissipativity assumption gives the exponential stability of the
McKean--Vlasov flow at the level of the limiting dynamics. For the Malliavin
calculus argument, we also need this stability to propagate to the finite particle
system, uniformly in $N$. The next assumption is a quantitative smallness condition
which allows us to close the estimates for the first and second-order Malliavin
derivatives of the particle flow. It will be used in
Propositions~\ref{prop:firstbounds-uniform}
and~\ref{prop:secondbounds-uniform}.
\begin{assumption}[Particle-flow Malliavin stability]
\label{ass:first-malliavin-uniform}
The diffusion coefficient $\sigma$ is bounded. Let $\gamma$ and $M_\sigma$ be the
constants appearing in Assumption~\ref{ass:coeff-level-new}. For
$p\in\left\{4,8\right\}$, let $\kappa_p>0$ be the corresponding constant from
Assumption~\ref{ass:coeff-level-new}(i), and assume
\begin{equation*}
2^{1/p}\Lambda_{p,\kappa_p,M_\sigma}\gamma
\left(
\frac{8}{p\kappa_p}
\right)^{1/p} < 1,
\end{equation*}
where
\begin{equation*}
\Lambda_{p,\kappa_p,M_\sigma}
=
\left(
\left( \frac{12 \left( p-1 \right)}{\kappa_p} \right)^{p-1}
\left(
1 + \left( \left( p-1 \right) M_\sigma \right)^p
\right)
+
\Xi_{p,\kappa_p}
\right)^{1/p},
\end{equation*}
and
\begin{equation*}
\Xi_{p,\kappa_p}
=
\begin{cases}
\displaystyle
\left( p-1 \right)
\left(
\frac{6 \left( p-1 \right)\left( p-2 \right)}{\kappa_p}
\right)^{\left( p-2 \right)/2}
& \mbox{if } p > 2\\[1.2ex]
1
& \mbox{if } p = 2
\end{cases}.
\end{equation*}
\end{assumption}

\subsection{Main result}

The main result of the paper is Theorem~\ref{thm:mainCLT}, which gives a
uniform-in-time quantitative control of the convergence of the fluctuations. Before
stating it, we define the limiting variance
$\sigma_t^2\left(\varphi\right)$. This variance is expressed in terms of the
coefficients appearing in the asymptotic expansions developed later in the paper.
In Subsection
\ref{S:BackwardPDErepresentationVar} we also present an alternative
backward PDE representation of $\sigma_t^2\left(\varphi\right)$, which
is of independent interest.
\begin{definition}[Limiting fluctuation variance]
\label{def:uniform-time-limiting-variance-abs-assump}
Let $\varphi \in C_b^7\left( \R^d \right)$. We define the limiting fluctuation
variance by
\begin{equation*}
\sigma_t^2\left( \varphi \right)
=
\alpha_2\left( t \right)
-
2 \left\langle \mu_t,\varphi \right\rangle \alpha_1\left( t \right),
\qquad t \ge 0,
\end{equation*}
where $\alpha_1$ and $\alpha_2$ are the functions introduced in
Corollary~\ref{cor:uniform-time-alpha-abs-assump}. The explicit formulas for
$\alpha_1$ and $\alpha_2$ are postponed to Section~\ref{sec:uniform-time}, since
their definition requires several intermediate quantities from the weak expansion
argument.
\end{definition}

Proposition~\ref{P:BackwardRepPDEVariance} in
Subsection~\ref{S:BackwardPDErepresentationVar} shows that the quantity
$\sigma_t^2\left(\varphi\right)$ defined above is indeed nonnegative for every
$\varphi \in C_b^7\left( \R^d \right)$ and every $t\ge 0$. In the main theorem,
we impose the additional nondegeneracy condition that this variance is bounded
away from zero uniformly in time. Under this assumption, the next result gives a
uniform-in-time quantitative control of the convergence of the fluctuations in the $W_1$ metric.

\begin{theorem}[Uniform-in-time quantitative CLT for the fluctuations]
\label{thm:mainCLT}
Assume that Assumptions~\ref{ass:smooth},~\ref{ass:second-malliavin-uniform},~\ref{ass:flow-smothness}, ~\ref{ass:coeff-level-new}, and~\ref{ass:first-malliavin-uniform} hold.
Let $\varphi\in C_b^7\left(\R^d\right)$ and let
$\sigma_t^2\left(\varphi\right)$ be the limiting fluctuation variance from
Definition~\ref{def:uniform-time-limiting-variance-abs-assump}. Assume that
there exists a constant $\underline \sigma > 0$ such that
\begin{equation}
\label{eq:uniform-positive-variance}
\inf_{t \ge 0}\sigma_t^2\left(\varphi\right)\ge \underline \sigma^2.
\end{equation}
Then, there exists a constant $C>0$ such that for every $N\ge 1$,
\begin{equation}
\label{eq:mainCLT}
\sup_{t \ge 0}
W_1\left(
G_t^N\left(\varphi\right),
\mathcal N\left(0,\sigma_t^2\left(\varphi\right)\right)
\right)
\le
\frac{C}{\sqrt{N}}.
\end{equation}
\end{theorem}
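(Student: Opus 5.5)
The plan is to split the distance to the Gaussian limit into a centering/variance part and a Gaussian-approximation part, and to make both uniform in $t$. Write $F_t^N = G_t^N(\varphi)$, $m_t^N=\E[F_t^N]$, $(v_t^N)^2 = \Var(F_t^N)$. By the triangle inequality,
\begin{align*}
W_1\left(F_t^N,\mathcal N\left(0,\sigma_t^2(\varphi)\right)\right)
&\le
W_1\left(F_t^N-m_t^N,\mathcal N\left(0,(v_t^N)^2\right)\right)\\
&\quad+|m_t^N|
+W_1\left(\mathcal N\left(0,(v_t^N)^2\right),\mathcal N\left(0,\sigma_t^2(\varphi)\right)\right).
\end{align*}
The last term is bounded by $|v_t^N-\sigma_t(\varphi)|\le |(v_t^N)^2-\sigma_t^2(\varphi)|/\underline\sigma$. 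This is where the nondegeneracy condition \eqref{eq:uniform-positive-variance} enters.

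\textbf{Mean and variance.} I would apply the uniform-in-time weak expansion, Proposition~\ref{prop:uniform-time-weak-expansion-abs-assump}, to the functionals $\mu\mapsto\langle\mu,\varphi\rangle$ and $\mu\mapsto\langle\mu,\varphi\rangle^2$. Both lie in $M^7(\Ptwo)$ because $\varphi\in C_b^7$. This gives
\begin{equation*}
\E\langle\mu_t^N,\varphi\rangle=\langle\mu_t,\varphi\rangle+\frac{\alpha_1(t)}{N}+O(N^{-2})
\end{equation*}
and the analogous expansion of the second moment with coefficient $\alpha_2(t)$, with remainders uniform in $t$ (Corollary~\ref{cor:uniform-time-alpha-abs-assump}). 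Then $|m_t^N|\le C/\sqrt N$. Expanding
\begin{equation*}
(v_t^N)^2 = N\left(\E\langle\mu_t^N,\varphi\rangle^2-(\E\langle\mu_t^N,\varphi\rangle)^2\right)
\end{equation*}
makes the $O(1)$ terms cancel and leaves $\alpha_2(t)-2\langle\mu_t,\varphi\rangle\alpha_1(t)+O(N^{-1})$. Hence $\sup_t|(v_t^N)^2-\sigma_t^2(\varphi)|\le C/N$, which also keeps $v_t^N\ge\underline\sigma/2$ for large $N$. Small $N$ are absorbed into $C$, since $W_1$ is bounded by the bounded second moments.

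\textbf{Gaussian approximation.} For the centered term I would use the second-order Poincaré inequality \cite{nourdin_second_2009,Vidotto2020} on the Wiener space generated by $(B^1,\dots,B^N)$; the initial data are handled by conditioning on $(X_0^{i,N})$, or equivalently by working conditionally on them. The inequality reads, schematically,
\begin{equation*}
W_1\left(F-\E F,\mathcal N(0,\Var F)\right)\le \frac{C}{\Var F}\left(\E\norm{DF}^4\right)^{1/4}\left(\E\norm{D^2F\otimes_1D^2F}^2\right)^{1/4}.
\end{equation*}
Here $D_r^jF=N^{-1/2}\sum_i\nabla\varphi(X_t^{i,N})D_r^jX_t^{i,N}$, and $D^2F$ involves $\nabla^2\varphi$ together with the second Malliavin derivatives. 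Propositions~\ref{prop:firstbounds-uniform} and~\ref{prop:secondbounds-uniform} should supply the following exponentially decaying bounds:
\begin{itemize}
\item the self-derivative satisfies $\E|D_r^iX_t^{i,N}|^p\lesssim e^{-c(t-r)}$;
\item the cross-derivatives $D_r^jX_t^{i,N}$, $j\neq i$, are $O(N^{-1})$ with the same decay;
\item the second derivatives obey analogous bounds, with an extra $N^{-1}$ for each index outside the diagonal.
\end{itemize}
Integrating the kernel over $r\in[0,t]$ converts these exponentials into constants independent of $t$. Counting powers of $N$ should then give $\E\norm{DF}^4=O(1)$ and $\E\norm{D^2F\otimes_1D^2F}^2=O(N^{-2})$. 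The second estimate holds because $D^2F$ is essentially $N^{-1/2}$ times a near-diagonal block kernel, so its contraction has norm of order $N^{-1}$. This yields the $C/\sqrt N$ bound uniformly in $t$, with $\Var F$ replaced by $(v_t^N)^2\ge\underline\sigma^2/2$.

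\textbf{Main obstacle.} The hardest step is the combinatorial bookkeeping in the contraction $\norm{D^2F\otimes_1D^2F}$. One has to track the powers of $N$ coming from diagonal versus off-diagonal index pairs together with time integrals of products of exponentials such as $e^{-c(t-r)}e^{-c(t-s)}$ and $e^{-c(s\vee r-\cdot)}$. Both must come out of order $N^{-1}$ and be bounded in $t$ simultaneously, and this is exactly where the smallness condition of Assumption~\ref{ass:first-malliavin-uniform} is used through the two propositions. I would handle it by first writing the contraction explicitly as a sum over the particle indices $(i,j,k,\ell)$ of integrals in the time variables. I would then use Hölder's inequality with the $L^8$ bounds, i.e.\ $p\in\{4,8\}$, and only afterwards integrate in time.
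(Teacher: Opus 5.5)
Your decomposition (centering shift, Gaussian approximation at the finite-$N$ variance, Gaussian-to-Gaussian comparison) and your treatment of the mean and variance via Proposition~\ref{prop:uniform-time-weak-expansion-abs-assump} match the paper. Your bound $|v_t^N-\sigma_t(\varphi)|\le |(v_t^N)^2-\sigma_t^2(\varphi)|/\underline\sigma$ is even slightly cleaner.

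\textbf{The gap is in the Gaussian-approximation step.} The global form you write, $(\E\norm{DF}^4)^{1/4}(\E\norm{D^2F\otimes_1D^2F}^2)^{1/4}$, cannot give $N^{-1/2}$ here. Your claim that the contraction has norm of order $N^{-1}$ holds for the operator norm, but it is false for the Hilbert--Schmidt norm on $\mathfrak H_t^{\otimes 2}$. That Hilbert--Schmidt norm is the one in your inequality, and it is also what your entrywise H\"older-and-sum bookkeeping controls.

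With the kernel bounds of Lemma~\ref{lem:Fbounds}, the contraction at $x=(s,j,\alpha)$, $y=(r,k,\beta)$ has $L^2(\Omega)$-size $N^{-1}(\mathbf 1_{\{j=k\}}+N^{-1})$. Hence $\E\norm{D^2F\otimes_1D^2F}^2\approx\sum_{j,k}N^{-2}(\mathbf 1_{\{j=k\}}+N^{-1})^2\asymp N^{-1}$, not $N^{-2}$.

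This is not an artifact of the upper bounds. For non-interacting particles, $D^2F$ is exactly block diagonal, equal to $N^{-1/2}\mathbf 1_{\{j=k\}}\Psi_j$. Then $\norm{D^2F\otimes_1D^2F}^2=N^{-2}\sum_j\norm{\Psi_j\otimes_1\Psi_j}^2$, whose expectation is of order $N^{-1}$. Your inequality therefore yields only $W_1\lesssim N^{-1/4}$.

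Switching to the operator-norm version $(\E\norm{D^2F}_{\mathrm{op}}^4)^{1/4}$ of Nourdin--Peccati--Reinert does not rescue the argument. Operator norms of a random kernel are not controlled by entrywise moments: going through the Hilbert--Schmidt norm gives $O(1)$, and a maximum over the $N$ diagonal blocks costs an $N$-dependent factor unless you have almost sure bounds.

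\textbf{The missing ingredient is the kernel-wise inequality of Theorem~\ref{thm:Vidotto}, which is what the paper uses.} There one multiplies $\sqrt{\E((D^2F\otimes_1D^2F)(x,y)^2)}\lesssim N^{-1}(\mathbf 1_{\{j=k\}}+N^{-1})$ pointwise by $\sqrt{\E(DF(x)^2DF(y)^2)}\lesssim N^{-1}$ before integrating. The sum over $(j,k)$ is then $N^{-2}\cdot O(N)$, so $\Delta_{N,t}=O(N^{-1})$ and $W_1\lesssim\sqrt{\Delta_{N,t}}=O(N^{-1/2})$. The factors $e^{-\hat\omega(2t-s-r)}$ make the time integrals uniform in $t$ (Proposition~\ref{prop:VidottoFunctional}). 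Your global Cauchy--Schwarz decouples a factor concentrated on the diagonal $j=k$ from a factor spread over all $N^2$ index pairs, and that decoupling is exactly where the factor $N^{1/4}$ is lost.

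A smaller point: conditioning on the initial positions is not equivalent to the unconditional statement. Conditionally you obtain Gaussians with random mean and random variance, while $\sigma_t^2(\varphi)$ contains the initial-data contribution $\Var(\psi_0(X_0))$ from Proposition~\ref{P:BackwardRepPDEVariance}. You would still need a CLT for the conditional mean and concentration of the conditional variance. The paper applies Theorem~\ref{thm:Vidotto} to $F_t^N$ directly and does not discuss this point either.
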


The proof of Theorem~\ref{thm:mainCLT} is carried out in the subsequent
sections. In Section~\ref{S:ProofMainTheorem}, we present the core of
the proof and
show how the desired uniform-in-time convergence control follows from three
ingredients: a uniform-in-time weak expansion, a uniform comparison between the
finite-particle variance and the limiting variance, and uniform-in-time
Malliavin derivative estimates. The remaining sections are devoted to proving these ingredients. In
Section~\ref{sec:uniform-time}, we establish the uniform-in-time weak expansion
of the empirical measure around its McKean--Vlasov limit for the specific test
functionals needed in the variance analysis. In
Section~\ref{limiting-variance-section}, we identify the limiting variance of
the fluctuations, prove the convergence of the finite-particle variances, and
derive a backward PDE representation for the limiting variance, which is of
independent interest. Finally, in
Section~\ref{sec:malliavin}, we prove the uniform-in-time bounds
on the first- and second-order Malliavin derivatives of the particle flow. These
estimates are then combined in Section~\ref{S:ProofMainTheorem} to complete the
proof of Theorem~\ref{thm:mainCLT}.

\section{Proof of the uniform-in-time fluctuation convergence Theorem \ref{thm:mainCLT}}\label{S:ProofMainTheorem}

In this section, we give the proof of Theorem \ref{thm:mainCLT}. The
proof relies on a number of preliminary results, whose proofs are deferred to later sections.  We collect the statements of these preliminary results in this section, and then we present the proof of Theorem \ref{thm:mainCLT}.

\subsection{A three-term decomposition}

Although $ G_t^N\left(\varphi\right)
=
\sqrt{N}
\left(
\left\langle\mu_t^N,\varphi\right\rangle
-
\left\langle\mu_t,\varphi\right\rangle
\right)$ is the natural fluctuation quantity, it is not
centered in general. Since the second-order Poincar\'e inequalities that we will use
later are formulated for centered random variables, we introduce the centered counterpart
\begin{equation*}
F_t^N\left( \varphi \right)
=
\sqrt{N}
\left(
\left\langle \mu_t^N,\varphi \right\rangle
-
\E\left( \left\langle \mu_t^N,\varphi \right\rangle \right)
\right)
=
\frac1{\sqrt{N}}
\sum_{i=1}^N
\left(
\varphi\left( X_t^{i,N} \right)
-
\E\left( \varphi\left( X_t^{i,N} \right) \right)
\right).
\end{equation*}
The two random variables differ only by the deterministic shift
\begin{equation*}
G_t^N\left( \varphi \right)
-
F_t^N\left( \varphi \right)
=
\sqrt{N}
\left(
\E\left( \left\langle \mu_t^N,\varphi \right\rangle \right)-\left\langle \mu_t,\varphi \right\rangle
\right),
\end{equation*}
and therefore they have the same variance, so that
\begin{equation*}
\operatorname{Var}\left( F_t^N\left( \varphi \right) \right)
=
\operatorname{Var}\left( G_t^N\left( \varphi \right) \right)
=
\sigma_{N,t}^2\left( \varphi \right),
\end{equation*}
where the latter equality is understood as the definition of $\sigma_{N,t}^2\left( \varphi \right)$ for $t\geq 0$. By the triangle inequality,
\begin{align}
\label{eq:maintriangle}
W_1\left(
G_t^N\left(\varphi\right),
\mathcal N\left(0,\sigma_t^2\left(\varphi\right)\right)
\right)
&\le
W_1\left(
G_t^N\left(\varphi\right),
F_t^N\left(\varphi\right)
\right)
+
W_1\left(
F_t^N\left(\varphi\right),
\mathcal N\left(0,\sigma_{N,t}^2\left(\varphi\right)\right)
\right)
\nonumber
\\
&\quad
+
W_1\left(
\mathcal N\left(0,\sigma_{N,t}^2\left(\varphi\right)\right),
\mathcal N\left(0,\sigma_t^2\left(\varphi\right)\right)
\right).
\end{align}

\subsection{Main estimates used in the proof}

The proof proceeds by estimating the three terms on the right-hand
side of \eqref{eq:maintriangle} uniformly in $ t \ge 0 $. To estimate the first term in \eqref{eq:maintriangle}, i.e., $W_1\left(
G_t^N\left(\varphi\right),
F_t^N\left(\varphi\right)
\right)$, we prove a uniform-in-time weak expansion for the difference $G_t^N\left(\varphi\right)-F_t^N\left(\varphi\right)
=
\sqrt{N}
\left(
\E\left(\left\langle\mu_t^N,\varphi\right\rangle\right)
-
\left\langle\mu_t,\varphi\right\rangle
\right)$. More precisely, in their paper \cite{CST2022}, the authors prove that
if the coefficients $b$ and $\sigma$ satisfy certain smoothness and
boundedness assumptions,  then a second-order weak expansion for the
interacting particle approximation holds. Concretely, for each sufficiently smooth functional $\Phi \colon \Ptwo\to\R$ (in the Lions sense), one has
\begin{equation*}
\E\left(\Phi\left(\mu_t^N\right)\right)
=
\Phi\left(\mu_t\right)+\frac{1}{N} \mathcal C_t\left(\Phi\right)+\mathcal O\!\left(\frac{1}{N^2}\right),
\end{equation*}
where the remainder is bounded in absolute value by $C_\Phi(t)/N^2$,
uniformly in $N$. In this paper, and in particular in Section
\ref{sec:uniform-time} we upgrade the above fixed-time weak expansion
to a statement that is uniform in time for the functionals needed for the proof of Theorem \ref{thm:mainCLT}. In particular, in Section  \ref{sec:uniform-time} we prove Proposition \ref{prop:uniform-time-weak-expansion-abs-assump}, which is of independent interest. Before presenting this result, we need a bit of notation. Let the semigroup
associated with the McKean--Vlasov flow be $P_t \Phi\left( \mu \right)
= \Phi\left( \mu_t^\mu \right)$, and for any $\Psi\in M^5\big(\Ptwo\big)$, set
\begin{equation}
  \label{defgamma-abs-assump}
\Gamma\Psi\left(\mu\right)
=
\frac12
\int_{\R^d}
\operatorname{Tr}
\left(
a\left(v,\mu\right)\partial_{\mu\mu}^2\Psi\left(\mu\right)\left(v,v\right)
\right)
\mu\left(dv\right),
\end{equation}
where we recall that
$a\left(v,\mu\right)=\sigma\left(v,\mu\right)\sigma\left(v,\mu\right)^\top\in\R^{d\times
  d}$. Since the proof requires uniform control of both the first and second moments of
$\left\langle\mu_t^N,\varphi\right\rangle$, we introduce the corresponding
measure functionals. For $\varphi \in C_b^7\left(\R^d\right)$, set
\begin{equation*}
\Phi_1\left(\mu\right)
=
\left\langle \mu,\varphi \right\rangle\quad\mbox{and}\quad
\Phi_2\left(\mu\right)
=
\left\langle \mu,\varphi \right\rangle^2,
\qquad
\mu\in\Ptwo.
\end{equation*}
We can now state our uniform-in-time weak expansion result.
\begin{proposition}[Uniform-in-time weak expansion]
\label{prop:uniform-time-weak-expansion-abs-assump}
Assume that Assumptions \ref{ass:smooth}, \ref{ass:flow-smothness} and
\ref{ass:coeff-level-new} hold. Let $\varphi \in C_b^7\left(\R^d\right)$ and let
$\Phi_1,\Phi_2\colon\Ptwo\to\R$ be defined above. Then,
there exist measurable functions $\beta_\ell \colon \left[ 0,\infty
\right) \to \R$, $\ell \in \left\{ 1,2 \right\}$, and constants $ C_\ell, C_\ell' > 0 $ such that, for every $ t \ge 0 $ and every $ N \ge 1 $,
\begin{equation}
\label{eq:uniform-time-weak-expansion-abs-assump}
\E\left( \Phi_\ell\left( \mu_t^N \right) \right)
=
\Phi_\ell\left( \mu_t \right)
+
\frac{\beta_\ell\left( t \right)}{N}
+
\frac{1}{N}
\int_0^t \Gamma P_{t-s}\Phi_\ell \left( \mu_s \right) ds
+
R_{\ell,N}\left( t \right),
\end{equation}
with
\begin{equation}
\label{eq:uniform-time-weak-coeff-abs-assump}
\abs{\beta_\ell\left( t \right)} \le C_\ell e^{-\omega t}\quad\mbox{and}\quad
\abs{\Gamma P_r\Phi_\ell \left( \mu \right)} \le C_\ell e^{-\omega r},
\end{equation}
for all $ t,r \ge 0 $ and $ \mu \in \Ptwo $, and with
\begin{equation}
\label{eq:uniform-time-weak-rem-abs-assump}
\sup_{t \ge 0} \abs{R_{\ell,N}\left( t \right)} \le \frac{C_\ell'}{N^2}.
\end{equation}
\end{proposition}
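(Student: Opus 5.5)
The approach is to follow the fixed-time weak expansion of \cite{CST2022} and track every constant explicitly in $t$. Exponential decay of the Lions derivatives of $P_r\Phi_\ell$ in $r$ is then used to make each time integral converge uniformly.

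The starting point is the standard interpolation identity. Set $u(s,\mu)=P_{t-s}\Phi_\ell(\mu)$, which solves the backward Kolmogorov equation on $\Ptwo$, and apply It\^o's formula for functions of the empirical measure to $s\mapsto u(s,\mu_s^N)$ on $[0,t]$. The generator of the particle system differs from the McKean--Vlasov generator only by the ``diagonal'' term $\frac{1}{2N}\int \operatorname{Tr}\left(a(v,\mu)\partial_v\partial_\mu u(\mu)(v)+a(v,\mu)\partial^2_{\mu\mu}u(\mu)(v,v)\right)\mu(dv)$. This gives
\begin{equation*}
\E\,\Phi_\ell(\mu_t^N)=\E\,P_t\Phi_\ell(\mu_0^N)+\frac1N\int_0^t \E\left[\mathcal{D}P_{t-s}\Phi_\ell(\mu_s^N)\right]ds .
\end{equation*}
Here $\mathcal{D}$ is the diagonal operator, and its $\partial_{\mu\mu}^2$ part is $\Gamma$ up to the $\partial_v\partial_\mu$ term.

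Three pieces then have to be handled.
\begin{enumerate}
\item[(a)] The initial term $\E P_t\Phi_\ell(\mu_0^N)-P_t\Phi_\ell(\nu)$, with $\mu_0^N$ the empirical measure of i.i.d.\ samples, is expanded by the classical second-order expansion for i.i.d.\ empirical measures. Its $1/N$ coefficient is $\frac12\int\left(\partial_v\partial_\mu+\ldots\right)P_t\Phi_\ell(\nu)$, which becomes part of $\beta_\ell(t)$. Its remainder is controlled by third- and fourth-order derivatives of $P_t\Phi_\ell$.
\item[(b)] The $\partial_v\partial_\mu$ part of $\mathcal D$ is absorbed into $\beta_\ell$.
\item[(c)] In the integral term, $\mu_s^N$ is replaced by $\mu_s$, using a first-order expansion once more. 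The error is $O(N^{-1})$ times derivatives of $\mathcal{D}P_{t-s}\Phi_\ell$ of order up to roughly $5$, so it contributes $O(N^{-2})$ after integration, provided those derivatives decay in $t-s$.
\end{enumerate}
The regularity demands all lie within $C_b^7$ and Assumption~\ref{ass:flow-smothness}, since $\varphi\in C_b^7$ and at most seven total derivatives fall on the flow.

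The key estimate, and the main obstacle, is a uniform bound
\begin{equation*}
\norm{D^{(n,\beta)}P_r\Phi_\ell}_\infty\le Ce^{-\omega r},\qquad |(n,\beta)|\le 7,\ n\ge1 .
\end{equation*}
For $\Phi_1$, one writes $P_r\Phi_1(\mu)=\int\E\varphi(X_r^{x,\mu})\mu(dx)$. Its Lions derivatives are expressed through $\partial_x^\gamma\partial_v^\beta\partial_\mu^nX_r^{x,\mu}$, and these satisfy linear SDEs driven by $\partial_xb,\partial_x\sigma$ together with source terms involving $\partial_\mu b,\partial_\mu\sigma$.

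The plan is to prove by induction on the order that
\begin{equation*}
\sup\E\abs{\partial^{(\cdot)}X_r^{x,\mu}}^p\le Ce^{-p\omega r}\quad\text{for } p\in\mathcal P .
\end{equation*}
The argument is as follows. Apply It\^o to $\abs{Y_r}^p$ and use the monotonicity \eqref{eq:uniform-time-monotonicity-p}, which gives the rate $\kappa_p$. Treat the measure-derivative feedback as a perturbation via \eqref{eq:small-measure-dependence}, handling the mean-field coupling $\E[\partial_\mu b\cdot Y]$ by H\"older, which produces the $2^{(p-1)/p}\gamma$ factors. Absorb cross terms with Young's inequality, which yields exactly the net rate $\omega$ of \eqref{eq:def-omega-uniform-time}. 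Higher-order derivatives carry source terms that are products of lower-order ones; these decay, and Gr\"onwall gives decay with the same rate. This is why moments up to $p=14$ are needed.

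For $\Phi_2=\Phi_1^2$, the identity $P_r\Phi_2=(P_r\Phi_1)^2$ holds because the flow is deterministic. The product rule then gives decay of every derivative term carrying $n\ge1$ measure derivatives, since each such term contains at least one differentiated factor. This yields $\abs{\Gamma P_r\Phi_\ell}\le C_\ell e^{-\omega r}$ and $\abs{\beta_\ell(t)}\le C_\ell e^{-\omega t}$.

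Finally, the $O(N^{-2})$ remainders are integrals of the form $\int_0^t e^{-\omega(t-s)}ds\le\omega^{-1}$, which gives \eqref{eq:uniform-time-weak-rem-abs-assump} uniformly in $t$.
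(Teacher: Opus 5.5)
\paragraph{Verdict.} Your overall route matches the paper's: interpolate along $s\mapsto P_{t-s}\Phi_\ell(\mu_s^N)$, expand the i.i.d.\ initial term, replace $\mu_s^N$ by $\mu_s$ in the correction with an $O(N^{-1})e^{-\omega(t-s)}$ error, and get the decay from It\^o--Young--Gr\"onwall estimates on the flow derivatives together with $P_r\Phi_2=(P_r\Phi_1)^2$. However, the generator comparison on which the argument rests is wrong.

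\paragraph{The generator comparison.} For $u(x_1,\dots,x_N)=\Phi\left(\frac1N\sum_k\delta_{x_k}\right)$ one has
$\partial_{x_i}^2u=\frac1N\partial_v\partial_\mu\Phi(\mu^N)(x_i)+\frac1{N^2}\partial_{\mu\mu}^2\Phi(\mu^N)(x_i,x_i)$.
Because the noises are independent, only these diagonal blocks enter the It\^o correction. Summing over $i$, the first piece gives $\frac12\int\operatorname{Tr}\left(a\,\partial_v\partial_\mu\Phi(\mu^N)(v)\right)\mu^N(dv)$ with \emph{no} factor $1/N$. This is exactly the second-order part of the McKean--Vlasov generator, not a correction. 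Only the second piece is of order $1/N$, and it equals $\frac1N\Gamma\Phi(\mu^N)$.

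A quick check confirms this: $d\langle\mu_s^N,\varphi\rangle=\langle\mu_s^N,\mathcal L_{\mu_s^N}\varphi\rangle ds+dM_s$ holds exactly. Your $\mathcal D$ would instead add a spurious $\frac1{2N}\langle\mu_s^N,\operatorname{Tr}(a\nabla^2\varphi)\rangle$. The correct identity is the one the paper takes from \cite[Lemma~2.11(ii)]{CST2022}, with $\Gamma$ alone.

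\paragraph{Why step (b) fails.} This is not cosmetic, because step (b) cannot be carried out. The spurious term would be
$\frac1N\int_0^t\E\left[\frac12\int\operatorname{Tr}\left(a\,\partial_v\partial_\mu P_{t-s}\Phi_\ell(\mu_s^N)(v)\right)\mu_s^N(dv)\right]ds$.
Even after replacing $\mu_s^N$ by $\mu_s$ and using decay in $t-s$, this is a convolution of $e^{-\omega(t-s)}$ against a non-decaying function of $s$. It is bounded uniformly in $t$ but has no reason to be $O(e^{-\omega t})$. Absorbing it into $\beta_\ell$ therefore breaks $\abs{\beta_\ell(t)}\le C_\ell e^{-\omega t}$, and before the replacement it would even make $\beta_\ell$ depend on $N$. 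Once the generator is computed correctly, (b) simply disappears and the rest of your argument goes through as in the paper.

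\paragraph{Smaller points.} The same confusion appears in (a). The $1/N$ coefficient of the i.i.d.\ expansion is $\frac12\widetilde{\E}\left[\int\frac{\delta^2P_t\Phi_\ell}{\delta m^2}(\nu)(\widetilde\xi,y)(\delta_{\widetilde\xi}-\nu)(dy)\right]$. It contains no $\partial_v\partial_\mu$ term (it vanishes for functionals linear in $\mu$), and it decays through $\partial_{\mu\mu}^2P_t\Phi_\ell$.

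Also, your induction cannot give $e^{-p\omega r}$ bounds for every $p\in\mathcal P$ at every order, since H\"older on the product source terms consumes moments. At order $q$ one only gets $p\in[2,14/q]$, as in the paper, which is enough.
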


The second term in \eqref{eq:maintriangle} is controlled by a second-order
Poincar\'e inequality. More precisely, we use the bound of \cite{Vidotto2020},
which estimates $W_1\left(
F_t^N\left(\varphi\right),
\mathcal N\left(0,\sigma_{N,t}^2\left(\varphi\right)\right)
\right)$ in terms of the first and second Malliavin derivatives of
$F_t^N\left(\varphi\right)$. The estimates needed to make this bound uniform in
time are proved in Sections~\ref{S:BoundFirstMalliavinDerivatives} and
\ref{S:BoundSecondMalliavinDerivatives}. For the reader's convenience, we next
recall the setting and the statement of the second-order Poincar\'e inequality derived in \cite{Vidotto2020}. Let $(A,\mathcal A,\lambda)$ be a Polish space with $\sigma$-finite non-atomic measure $\lambda$ and let
$\mathfrak{H}=L^2(A,\mathcal A,\lambda)$.
Let $X=\{X(h):h\in\mathfrak{H}\}$ be an isonormal Gaussian process and let $D$ and $D^2$ denote the Malliavin derivatives.
\begin{theorem}
\label{thm:Vidotto}
Let $F\in\mathbb{D}^{2,4}$ be centered with $\E[F^2]=\sigma^2>0$ and
let $Z_\sigma\sim\mathcal{N}(0,\sigma^2)$. Then,
\begin{equation*}
W_1(F,Z_\sigma)\le \sqrt{\frac{8}{\pi\sigma^2}}
\sqrt{
\iint_{A\times A}
\sqrt{\E\left(\left(D^2F\otimes_1 D^2F\right)(x,y)^2\right)}
\sqrt{\E\left(DF(x)^2DF(y)^2\right)}
 \lambda(dx)\lambda(dy)
}.
\end{equation*}
\end{theorem}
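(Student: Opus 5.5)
The plan is the standard Malliavin–Stein route, followed by a Gaussian Poincaré inequality applied to the "Gamma" functional. The aim is to reduce $W_1$ to a variance and then rewrite the squared norm of its Malliavin derivative as the double integral in the statement.

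\textbf{Stein step.} Fix $h$ with $\operatorname{Lip}(h)\le 1$ and let $f_h$ solve the Stein equation
\begin{equation*}
\sigma^2 f'(x)-xf(x)=h(x)-\E h(Z_\sigma).
\end{equation*}
The classical bounds give $\|f_h'\|_\infty\le \sqrt{2/\pi}\,\sigma^{-2}$ after rescaling the $\sigma=1$ bound. Since $F$ is centered and lies in $\mathbb D^{1,2}$, the integration-by-parts identity $F=\delta(-DL^{-1}F)$ gives
\begin{equation*}
\E\left(Ff_h(F)\right)=\E\left(f_h'(F)\,G\right),\qquad G=\langle DF,-DL^{-1}F\rangle_{\mathfrak H}.
\end{equation*}
Taking $f=1$-type test functions shows $\E G=\E F^2=\sigma^2$. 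Hence
\begin{equation*}
W_1(F,Z_\sigma)\le \sqrt{2/\pi}\,\sigma^{-2}\,\E\left|\sigma^2-G\right|\le \sqrt{2/\pi}\,\sigma^{-2}\sqrt{\Var(G)}.
\end{equation*}

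\textbf{Poincaré step.} By the Gaussian Poincaré inequality, $\Var(G)\le \E\|DG\|_{\mathfrak H}^2$, and this is legitimate because $F\in\mathbb D^{2,4}$ ensures $G\in\mathbb D^{1,2}$. By the product rule,
\begin{equation*}
DG=D^2F\otimes_1 u+DF\otimes_1 U,
\end{equation*}
where Mehler's formula gives
\begin{equation*}
u=-DL^{-1}F=\int_0^\infty e^{-t}P_tDF\,dt,\qquad U=-D^2L^{-1}F=\int_0^\infty e^{-2t}P_tD^2F\,dt.
\end{equation*}
Using $(a+b)^2\le 2a^2+2b^2$, it suffices to bound $\E\|D^2F\otimes_1 u\|^2$ and $\E\|DF\otimes_1 U\|^2$ separately. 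The key algebraic identity is
\begin{equation*}
\|D^2F\otimes_1 u\|_{\mathfrak H}^2=\iint u(x)u(y)\,(D^2F\otimes_1D^2F)(x,y)\,\lambda(dx)\lambda(dy).
\end{equation*}
Applying Cauchy--Schwarz in $\omega$ gives the factor $\sqrt{\E((D^2F\otimes_1D^2F)(x,y)^2)}\,\sqrt{\E(u(x)^2u(y)^2)}$. Finally, Jensen over the probability measure $e^{-t}dt$, together with the representation of $P_t$ as a conditional expectation on an enlarged Gaussian space and Hölder, yields
\begin{equation*}
\E\left(u(x)^2u(y)^2\right)\le \E\left(DF(x)^2DF(y)^2\right).
\end{equation*}

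\textbf{Main obstacle.} The hardest part is the second term $DF\otimes_1U$, where the semigroup acts on the second derivative rather than on $DF$. Written out,
\begin{equation*}
\|DF\otimes_1 U\|^2=\iint DF(x)DF(y)\,(U\otimes_1U)(x,y)\,\lambda(dx)\lambda(dy).
\end{equation*}
The task is to show, via Mehler, Jensen over $e^{-2t}dt$ (total mass $1/2$, which helps), and Cauchy--Schwarz in $\omega$ and in the auxiliary Gaussian copy, that
\begin{equation*}
\E\left((U\otimes_1U)(x,y)^2\right)\lesssim \E\left((D^2F\otimes_1D^2F)(x,y)^2\right).
\end{equation*}
One must be careful here: the contraction is taken inside $P_s\otimes P_t$ with two different times. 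I would first try a conditional Cauchy--Schwarz on the enlarged space, together with symmetry between $s$ and $t$. Collecting the constants, namely $2/\pi$ from Stein and the factor $2\cdot 2$ from splitting and the two terms, should give the stated $\sqrt{8/(\pi\sigma^2)}$ after taking square roots.
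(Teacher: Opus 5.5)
This statement is not proved in the paper; it is quoted from \cite{Vidotto2020}. Your skeleton is the standard one: a Stein bound on $\E\abs{\sigma^2-G}$, the Poincar\'e inequality for $G=\langle DF,-DL^{-1}F\rangle_{\mathfrak H}$, and Mehler representations of $-DL^{-1}F$ and $-D^2L^{-1}F$. However, the step that is supposed to produce the specific integrand of the theorem is wrong as written, and the second term is left unresolved.

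\textbf{The gap.} After Cauchy--Schwarz in $\omega$ you need, pointwise in $(x,y)$, the inequality $\E\left(u(x)^2u(y)^2\right)\le\E\left(DF(x)^2DF(y)^2\right)$ with $u=-DL^{-1}F$. This is false in general.

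The reason is that $u(x)u(y)$ is a product of two separate Mehler averages, and $(\E' a)^2(\E' b)^2\le\E'(a^2b^2)$ fails in general. Applying Jensen factor by factor evaluates $DF(x)$ and $DF(y)$ at two different points, $e^{-t}X+\sqrt{1-e^{-2t}}X'$ and $e^{-s}X+\sqrt{1-e^{-2s}}X''$. Their joint law is not that of $(X,X)$, so H\"older only gives $\E(DF(x)^4)^{1/2}\E(DF(y)^4)^{1/2}$. That yields a weaker inequality than the one stated.

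Here is a concrete counterexample. Take $A=[0,1]$ with Lebesgue measure, $e_1=\sqrt2\,\mathbf 1_{[0,1/2)}$, $e_2=\sqrt2\,\mathbf 1_{[1/2,1]}$, $X_i=X(e_i)$, $W=X_1-X_2$, and $F=X_1+X_2+\epsilon\left(W^2/2-1\right)$. This $F$ is centered and smooth. Then $DF=e_1+e_2+\epsilon W(e_1-e_2)$. Since $W^2/2-1$ lies in the second chaos, $u=e_1+e_2+\tfrac{\epsilon}{2}W(e_1-e_2)$. For $x<1/2\le y$,
\[
\E\left(DF(x)^2DF(y)^2\right)=4\left(1-4\epsilon^2+12\epsilon^4\right),\qquad \E\left(u(x)^2u(y)^2\right)=4\left(1-\epsilon^2+\tfrac34\epsilon^4\right).
\]
The second quantity is strictly larger for small $\epsilon>0$, on a set of positive $\lambda\otimes\lambda$-measure.

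\textbf{The repair.} Apply Jensen before expanding the norm. Write $u=\E'\left(DF(Z)\right)$ with $Z=e^{-\tau}X+\sqrt{1-e^{-2\tau}}X'$, where $\tau\sim\mathrm{Exp}(1)$ and $X'$ is an independent copy, and $\E'$ integrates over $(\tau,X')$.

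1. The map $v\mapsto\norm{D^2F\otimes_1 v}_{\mathfrak H}^2$ is convex, so $\norm{D^2F\otimes_1u}^2\le\E'\norm{D^2F(X)\otimes_1DF(Z)}^2$.
2. Only now expand this as $\iint DF(Z)(x)DF(Z)(y)\,(D^2F\otimes_1D^2F)(x,y)$.
3. Apply Cauchy--Schwarz on the product space. The same $Z$ enters both factors and $Z$ has the law of $X$, so you get exactly $\sqrt{\E(DF(x)^2DF(y)^2)}$. Hence $\E\norm{D^2F\otimes_1u}^2\le I$, where $I$ is the double integral in the statement.

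The same device handles the term you flagged as the main obstacle. Write $U=\tfrac12\E'\left(D^2F(Z)\right)$ with $\tau\sim\mathrm{Exp}(2)$. Then $\E\norm{DF\otimes_1U}^2\le\tfrac14\E\E'\norm{DF(X)\otimes_1D^2F(Z)}^2\le\tfrac14 I$. No contraction at two different times ever appears. The pointwise comparison $\E((U\otimes_1U)(x,y)^2)\lesssim\E((D^2F\otimes_1D^2F)(x,y)^2)$, which is of the same doubtful type, is not needed. Altogether $\Var(G)\le 2I+\tfrac12 I$.

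\textbf{The Stein constant.} For $h$ $1$-Lipschitz, the solution of $\sigma^2f'-xf=h-\E h(Z_\sigma)$ satisfies $\norm{f_h'}_\infty\le\sqrt{2/\pi}\,\sigma^{-1}$, not $\sigma^{-2}$. You can check this by scaling: $F\mapsto cF$ multiplies $W_1$ by $c$ and $I$ by $c^4$, so the prefactor must scale like $\sigma^{-1}$. With $\sigma^{-2}$ your bookkeeping would give $\sqrt{8/\pi}\,\sigma^{-2}$, not the stated constant. With $\sigma^{-1}$ you get
\[
W_1(F,Z_\sigma)\le\sqrt{\tfrac{2}{\pi\sigma^2}}\sqrt{\tfrac52 I}\le\sqrt{\tfrac{8}{\pi\sigma^2}}\sqrt{I}.
\]
A smaller slip: $\E G=\sigma^2$ comes from the test function $f(x)=x$, not from an ``$f=1$-type'' choice.
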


In particular, in Section \ref{S:UniformControlVidotto} we establish
the following proposition, which is a key statement needed to control the upper bound in Theorem \ref{thm:Vidotto}. We stress that its proof is based on very precise uniform-in-time control of the first- and second-order Malliavin derivatives of the particle system (proven in Sections \ref{S:BoundFirstMalliavinDerivatives} and \ref{S:BoundSecondMalliavinDerivatives} respectively).
\begin{proposition}
\label{prop:VidottoFunctional}
Assume that Assumptions \ref{ass:smooth},
\ref{ass:second-malliavin-uniform}, \ref{ass:flow-smothness} and \ref{ass:coeff-level-new} hold,  and let
$\varphi\in C_b^2\left(\R^d\right)$.
For $ t \ge 0 $, define
\begin{align*}
\Delta_{N,t}
&=
\iint_{A_t\times A_t}
\sqrt{
\E\left(
\left(
D^2F_t^N\left(\varphi\right)\otimes_1
D^2F_t^N\left(\varphi\right)
\right)\left(x,y\right)^2
\right)
}
\\
&\qquad\qquad\qquad\qquad\qquad\qquad\qquad
\sqrt{
\E\left(
DF_t^N\left(\varphi\right)\left(x\right)^2
DF_t^N\left(\varphi\right)\left(y\right)^2
\right)
}
\lambda_t\left(dx\right)\lambda_t\left(dy\right),
\end{align*}
where
$A_t=\left[0,t\right]\times\left\{1,\dots,N\right\}\times\left\{1,\dots,m\right\}$,
and
\begin{equation*}
\lambda_t\left(ds,dj,d\alpha\right)=ds\sum_{j=1}^N\sum_{\alpha=1}^m
\delta_{\left(j,\alpha\right)}.
\end{equation*}
Then, there exists a constant $ C > 0 $, independent of $ t $ and $ N $, such that
\begin{equation}
\label{eq:GammaBound}
\sup_{t \ge 0} \Delta_{N,t}\le \frac{C}{N}.
\end{equation}
\end{proposition}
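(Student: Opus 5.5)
We compute the Malliavin derivatives of $F_t^N(\varphi)=N^{-1/2}\sum_i(\varphi(X_t^{i,N})-\E\varphi(X_t^{i,N}))$ by the chain rule. For $x=(s,j,\alpha)$ and $y=(r,k,\beta)$ in $A_t$ we get
\begin{equation*}
DF_t^N(\varphi)(x)=\frac{1}{\sqrt N}\sum_{i=1}^N \nabla\varphi\left(X_t^{i,N}\right)\cdot D_{s}^{j,\alpha}X_t^{i,N},
\end{equation*}
and
\begin{align*}
D^2F_t^N(\varphi)(x,y)&=\frac{1}{\sqrt N}\sum_{i=1}^N\Big(\nabla^2\varphi\left(X_t^{i,N}\right)\left[D_s^{j,\alpha}X_t^{i,N},D_r^{k,\beta}X_t^{i,N}\right]\\
&\qquad\qquad+\nabla\varphi\left(X_t^{i,N}\right)\cdot D_r^{k,\beta}D_s^{j,\alpha}X_t^{i,N}\Big).
\end{align*}
We then invoke the uniform-in-time bounds of Propositions~\ref{prop:firstbounds-uniform} and \ref{prop:secondbounds-uniform}. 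We expect their structure to be as follows. The diagonal derivative $D_s^{i,\alpha}X_t^{i,N}$ has $L^p$ norm at most $Ce^{-c(t-s)}$. The off-diagonal derivative $D_s^{j,\alpha}X_t^{i,N}$, $j\ne i$, has $L^p$ norm at most $\frac{C}{N}e^{-c(t-s)}$, the mean-field coupling costing a factor $1/N$. Second derivatives gain an additional factor $1/N$ for each index not equal to $i$, and decay like $e^{-c(t-s)-c(t-r)}$, possibly only $e^{-c(t-s\wedge r)}$.

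\emph{First-derivative factor.} Summing over $i$, the term $i=j$ contributes $O(N^{-1/2})$. The $N-1$ terms $i\neq j$ each contribute $O(N^{-3/2})$, so together they also give $O(N^{-1/2})$. Hence, by Cauchy--Schwarz and $\varphi\in C_b^2$,
\begin{equation*}
\sqrt{\E\left(DF(x)^2DF(y)^2\right)}\le\frac{C}{N}e^{-c(t-s)}e^{-c(t-r)}.
\end{equation*}

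\emph{Second-derivative factor.} For $x=(s,j,\alpha)$ and $z=(u,l,\gamma)$, the same case split on whether $i$ coincides with $j$ or $l$ gives
\begin{equation*}
\norm{D^2F(x,z)}_{L^4}\le\frac{C}{\sqrt N}\,e^{-c(t-s)}e^{-c(t-u)}\Big(\mathbf 1_{j=l}+\frac1N\Big).
\end{equation*}
The contraction is
\begin{equation*}
\left(D^2F\otimes_1D^2F\right)(x,y)=\int_{A_t}D^2F(x,z)\,D^2F(y,z)\,\lambda_t(dz).
\end{equation*}
By Minkowski and Cauchy--Schwarz, its $L^2$ norm is bounded by
\begin{equation*}
\frac{C}{N}e^{-c(t-s)-c(t-r)}\int_0^t e^{-2c(t-u)}\,du\,\sum_{l=1}^N\Big(\mathbf 1_{j=l}+\frac1N\Big)\Big(\mathbf 1_{k=l}+\frac1N\Big)\le\frac{C}{N}e^{-c(t-s)-c(t-r)}\Big(\mathbf 1_{j=k}+\frac1N\Big).
\end{equation*}
The time integral is bounded uniformly in $t$ because of the exponential decay.

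\emph{Assembling.} Multiplying the two factors and integrating over $(s,j,\alpha),(r,k,\beta)\in A_t$, we obtain
\begin{equation*}
\Delta_{N,t}\le \frac{C}{N^2}\left(\int_0^t e^{-2c(t-s)}ds\right)^2\sum_{j,k=1}^N\Big(\mathbf 1_{j=k}+\frac1N\Big)\le\frac{C}{N^2}\cdot 2N=\frac{C}{N},
\end{equation*}
uniformly in $t\ge0$. This gives \eqref{eq:GammaBound}.

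The main obstacle is the second-derivative estimate, Proposition~\ref{prop:secondbounds-uniform}. It requires the sharp off-diagonal counting, a factor $1/N$ for each distinct particle index, together with exponential decay in both times $s$ and $r$. A weaker decay such as $e^{-c(t-s\wedge r)}$ would still keep the time integrals finite after the contraction, but a cruder count in $N$ would destroy the $1/N$ rate. Proving those bounds requires closing a linear SDE system for $DX$ and $D^2X$ via the dissipativity \eqref{eq:uniform-time-monotonicity-p}, with the measure-derivative coupling absorbed by the smallness of $\gamma$ in Assumption~\ref{ass:first-malliavin-uniform}. Taking these propositions as given, the remaining step here is the careful index and time bookkeeping above.
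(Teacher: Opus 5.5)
Your proposal is correct and follows the paper's proof essentially step for step. The paper uses the same ingredients: the chain-rule formulas for $DF_t^N(\varphi)$ and $D^2F_t^N(\varphi)$, the bounds of Lemma~\ref{lem:Fbounds} obtained from Propositions~\ref{prop:firstbounds-uniform} and \ref{prop:secondbounds-uniform}, Minkowski and H\"older on the contraction, and the count $\sum_{j,k}\bigl(\mathbf{1}_{\{j=k\}}+\frac1N\bigr)=2N$.

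One adjustment is needed: Proposition~\ref{prop:secondbounds-uniform} gives only the weaker decay $e^{-\hat{\omega}(t-\min\{r,s\})}$, not your product form. So the $u$-integral in the contraction is bounded only by $C\bigl(1+\min\{t-s,t-r\}\bigr)e^{-\hat{\omega}(2t-s-r)}$; the paper obtains this by splitting the integral over $[0,r]$, $[r,s]$ and $[s,t]$. The linear factor is then harmless once multiplied by the $e^{-\hat{\omega}(2t-s-r)}$ coming from the $DF$ factor, which is what your fallback remark anticipates.
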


For the third term  in \eqref{eq:maintriangle}, i.e.,  $W_1\left(
\mathcal N\left(0,\sigma_{N,t}^2\left(\varphi\right)\right),
\mathcal N\left(0,\sigma_t^2\left(\varphi\right)\right)
\right)$, we essentially need uniform-in-time control of the difference $\abs{
\sigma_{N,t}^2\left(\varphi\right)-\sigma_t^2\left(\varphi\right)
}$. This is the content of the following proposition, whose proof is based on the uniform-in-time weak
expansion of Proposition
\ref{prop:uniform-time-weak-expansion-abs-assump} and is given in Section
\ref{limiting-variance-section}.

\begin{proposition}[Uniform comparison of variances]
\label{prop:uniform-time-variance-comparison-abs-assump}
Assume that Assumptions \ref{ass:smooth}, \ref{ass:flow-smothness} and
\ref{ass:coeff-level-new} hold. Let $\varphi\in C_b^7\left(\R^d\right)$. Then, there
exists $C>0$ such that, for every $N\ge 1$,
\begin{equation*}
\sup_{t\ge 0}
\left|
\sigma_{N,t}^2\left(\varphi\right)
-
\sigma_t^2\left(\varphi\right)
\right|
\le
\frac{C}{N}.
\end{equation*}
\end{proposition}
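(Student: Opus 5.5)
Write $\sigma_{N,t}^2(\varphi)=N\operatorname{Var}\left(\left\langle\mu_t^N,\varphi\right\rangle\right)=N\left(\E\left(\Phi_2(\mu_t^N)\right)-\E\left(\Phi_1(\mu_t^N)\right)^2\right)$, and insert the uniform-in-time weak expansion of Proposition~\ref{prop:uniform-time-weak-expansion-abs-assump} for $\ell=1,2$. Set
\begin{equation*}
c_\ell(t)=\beta_\ell(t)+\int_0^t\Gamma P_{t-s}\Phi_\ell(\mu_s)\,ds .
\end{equation*}
By \eqref{eq:uniform-time-weak-coeff-abs-assump}, $|c_\ell(t)|\le C_\ell+C_\ell\int_0^\infty e^{-\omega r}dr\le C_\ell(1+\omega^{-1})$, uniformly in $t$. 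The expansion then reads $\E\Phi_\ell(\mu_t^N)=\Phi_\ell(\mu_t)+c_\ell(t)/N+R_{\ell,N}(t)$, with $\sup_t|R_{\ell,N}(t)|\le C'_\ell/N^2$.

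Next I expand the variance. Using $\Phi_2(\mu_t)=\Phi_1(\mu_t)^2=\left\langle\mu_t,\varphi\right\rangle^2$, the zeroth-order terms cancel exactly:
\begin{equation*}
\E\Phi_2(\mu_t^N)-\left(\E\Phi_1(\mu_t^N)\right)^2
=\frac{c_2(t)-2\left\langle\mu_t,\varphi\right\rangle c_1(t)}{N}
+R_{2,N}(t)-2\left\langle\mu_t,\varphi\right\rangle R_{1,N}(t)
-\left(\frac{c_1(t)}{N}+R_{1,N}(t)\right)^2 .
\end{equation*}
Multiplying by $N$ gives $\sigma_{N,t}^2(\varphi)=c_2(t)-2\left\langle\mu_t,\varphi\right\rangle c_1(t)+E_N(t)$. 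Since $|\left\langle\mu_t,\varphi\right\rangle|\le\|\varphi\|_\infty$, the uniform bounds on $c_1$ and $R_{\ell,N}$ give
\begin{equation*}
|E_N(t)|\le N\left(\frac{C_2'}{N^2}+\frac{2\|\varphi\|_\infty C_1'}{N^2}+\frac{2C_1^2(1+\omega^{-1})^2}{N^2}+\frac{2C_1'^2}{N^4}\right)\le\frac{C}{N},
\end{equation*}
uniformly in $t\ge0$.

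It remains to identify the leading term with $\sigma_t^2(\varphi)$. By Definition~\ref{def:uniform-time-limiting-variance-abs-assump}, $\sigma_t^2(\varphi)=\alpha_2(t)-2\left\langle\mu_t,\varphi\right\rangle\alpha_1(t)$, with $\alpha_\ell$ taken from Corollary~\ref{cor:uniform-time-alpha-abs-assump}. I expect that corollary to define precisely $\alpha_\ell(t)=\beta_\ell(t)+\int_0^t\Gamma P_{t-s}\Phi_\ell(\mu_s)\,ds=c_\ell(t)$, that is, the $1/N$ coefficient in the expansion. In that case the claim follows immediately with the constant above. If instead $\alpha_\ell$ is written through the explicit formulas of Section~\ref{sec:uniform-time}, one has to check that they agree with $c_\ell$ for every $t$. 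This holds because the $1/N$ coefficient of a weak expansion is unique: two expansions with $O(N^{-2})$ remainders have equal $1/N$ coefficients, as one sees by multiplying their difference by $N$ and letting $N\to\infty$.

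The main obstacle is this identification of the $1/N$ coefficient with the $\alpha_\ell$ of Corollary~\ref{cor:uniform-time-alpha-abs-assump}, together with making sure the constants are uniform in $t$. The uniformity itself has already been established in Proposition~\ref{prop:uniform-time-weak-expansion-abs-assump} through the exponential decay rate $\omega$, so the proof reduces to bookkeeping. The only point needing care is that $\Phi_2(\mu_t)=\Phi_1(\mu_t)^2$, which removes the $O(1)$ term so that the factor $N$ produces no blow-up.
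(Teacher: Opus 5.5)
Your proposal is correct and follows essentially the same route as the paper. You insert the uniform weak expansions for $\Phi_1,\Phi_2$, let $\Phi_2(\mu_t)=\Phi_1(\mu_t)^2$ cancel the zeroth-order terms, multiply by $N$, and bound the leftover terms uniformly in $t$ using $\sup_t|\alpha_1(t)|<\infty$, $\sup_t|R_{\ell,N}(t)|\le C/N^2$ and $|\langle\mu_t,\varphi\rangle|\le\norm{\varphi}_\infty$. Your expectation about Corollary~\ref{cor:uniform-time-alpha-abs-assump} is right: it defines $\alpha_\ell(t)=\beta_\ell(t)+\int_0^t\Gamma P_{t-s}\Phi_\ell(\mu_s)\,ds$, which is exactly your $c_\ell$, so the uniqueness-of-coefficient detour is not needed.
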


\subsection{Proof of Theorem~\ref{thm:mainCLT}}

We now have all the ingredients to present the proof of our main
theorem. As discussed above, we make use of the triangle inequality
\eqref{eq:maintriangle} and we estimate the three terms on the right-hand side of \eqref{eq:maintriangle} uniformly in $ t \ge 0 $.
\\~\\
For the first term in \eqref{eq:maintriangle}, the Wasserstein distance is bounded by the $ L^1 $-norm
of the difference. By Proposition \ref{prop:uniform-time-weak-expansion-abs-assump},
\begin{equation*}
\E\left(\left\langle\mu_t^N,\varphi\right\rangle\right)
=
\left\langle\mu_t,\varphi\right\rangle
+
\frac{\alpha_1\left( t \right)}{N}
+
R_{1,N}\left( t \right),
\end{equation*}
with $\sup_{t \ge 0}\abs{\alpha_1\left( t \right)} < \infty$ and
$\sup_{t \ge 0}\abs{R_{1,N}\left( t \right)} \le C/N^2$.
Hence,
\begin{equation}
\label{eq:uniform-mainclt-first-term}
\sup_{t \ge 0}
W_1\left(
G_t^N\left(\varphi\right),
F_t^N\left(\varphi\right)
\right)
\le
\frac{C}{\sqrt{N}}.
\end{equation}

For the second term in \eqref{eq:maintriangle}, the random variable $ F_t^N\left(\varphi\right) $ is centered by
definition, and Lemma~\ref{lem:Fbounds} shows that
$ F_t^N\left(\varphi\right)\in\mathbb D^{2,4} $ for every $ t \ge 0 $.
Moreover, Proposition~\ref{prop:uniform-time-variance-comparison-abs-assump}
implies
\begin{equation*}
\sup_{t \ge 0}
\abs{
\sigma_{N,t}^2\left( \varphi \right) - \sigma_t^2\left( \varphi \right)
}
\le
\frac{C}{N}.
\end{equation*}

Hence, by \eqref{eq:uniform-positive-variance}, there exists $ N_0 \ge
1 $ such that for all $t \geq 0$ and all $N \geq N_0$,
$\sigma_{N,t}^2\left( \varphi \right) \ge \underline \sigma^2/2$. For such $ N $, Theorem~\ref{thm:Vidotto} and
Proposition~\ref{prop:VidottoFunctional} yield
\begin{align}
\label{eq:uniform-mainclt-second-term}
\sup_{t \ge 0}
W_1\left(
F_t^N\left(\varphi\right),
\mathcal N\left(0,\sigma_{N,t}^2\left(\varphi\right)\right)
\right)
&\le
\sup_{t \ge 0}
\sqrt{
\frac{8}{\pi \sigma_{N,t}^2\left(\varphi\right)}
}
\sqrt{\Delta_{N,t}}
\le
C \sup_{t \ge 0}\sqrt{\Delta_{N,t}}
\le
\frac{C}{\sqrt{N}}.
\end{align}

For the finitely many values $ 1 \le N < N_0 $, the same estimate holds after enlarging
the constant.

For the third term in \eqref{eq:maintriangle}, note that centered Gaussian random variables satisfy
\begin{equation*}
W_1\left(
\mathcal N\left(0,\tau_1^2\right),
\mathcal N\left(0,\tau_2^2\right)
\right)
=
\sqrt{\frac{2}{\pi}}\abs{\tau_1-\tau_2}.
\end{equation*}
Applying this identity with
$\tau_1=\sigma_{N,t}\left(\varphi\right)$ and
$\tau_2=\sigma_t\left(\varphi\right)$, and using
\begin{equation*}
\abs{\tau_1-\tau_2}
=
\frac{\abs{\tau_1^2-\tau_2^2}}{\tau_1+\tau_2},
\end{equation*}
we first note that Proposition~\ref{prop:uniform-time-variance-comparison-abs-assump}
and the nondegeneracy assumption \eqref{eq:uniform-positive-variance} imply that,
for all sufficiently large $N$,
\begin{equation*}
\inf_{t\ge 0}\sigma_{N,t}^2\left(\varphi\right)
\ge
\frac{\underline{\sigma}^2}{2}.
\end{equation*}
Consequently, $\sigma_{N,t}\left(\varphi\right)+\sigma_t\left(\varphi\right)$ is bounded
away from zero uniformly in $t\ge 0$ and for all sufficiently large $N$. Therefore,
using Proposition~\ref{prop:uniform-time-variance-comparison-abs-assump} once more,
we obtain 
\begin{align}
\label{eq:uniform-mainclt-third-term}
\sup_{t \ge 0}
W_1\left(
\mathcal N\left(0,\sigma_{N,t}^2\left(\varphi\right)\right),
\mathcal N\left(0,\sigma_t^2\left(\varphi\right)\right)
\right)
&\le
C
\sup_{t \ge 0}
\abs{
\sigma_{N,t}^2\left(\varphi\right)
-
\sigma_t^2\left(\varphi\right)
}
\le
\frac{C}{N}
\le
\frac{C}{\sqrt{N}}.
\end{align}
Combining
\eqref{eq:maintriangle},
\eqref{eq:uniform-mainclt-first-term},
\eqref{eq:uniform-mainclt-second-term}, and
\eqref{eq:uniform-mainclt-third-term}
proves \eqref{eq:mainCLT} for all sufficiently large $ N $.
The finitely many remaining values of $ N $ are absorbed into the
constant.\qed

\section{Uniform-in-time weak expansion of functionals of the particle empirical measure}
\label{sec:uniform-time}

In \cite{CST2022}, the authors prove a fixed-time second-order weak expansion for
interacting particle approximations under suitable smoothness and boundedness assumptions
on the coefficients $b$ and $\sigma$. More precisely, if
$\Phi \colon \Ptwo \to \R$ is sufficiently smooth in the Lions sense, then for each fixed
$t \ge 0$,
\begin{equation*}
\E\left( \Phi\left( \mu_t^N \right) \right)
=
\Phi\left( \mu_t \right)
+
\frac{1}{N}\mathcal C_t\left( \Phi \right)
+
\mathcal O\!\left( \frac{1}{N^2} \right),
\end{equation*}
where the remainder is bounded in absolute value by $C_\Phi(t)/N^2$, uniformly in $N$.
The dependence of the constant on $t$ is harmless at fixed time, but it becomes the main
issue when one seeks estimates that hold uniformly on $\left[ 0,\infty \right)$.

The aim of this section is to show how this fixed-time expansion can be upgraded to a
uniform-in-time statement for the specific functionals needed in the fluctuation analysis.
The natural tool for doing so is the McKean--Vlasov semigroup
\begin{equation*}
P_t\Phi\left( \mu \right) = \Phi\left( \mu_t^\mu \right),
\end{equation*}
which allows us to recast the problem in terms of the propagated functionals
$P_t\Phi_1$ and $P_t\Phi_2$, where $\Phi_1\left( \mu \right) =
\left\langle \mu,\varphi \right\rangle$ and $\Phi_2\left( \mu \right)
= \left\langle \mu,\varphi \right\rangle^2$. Thus, the problem reduces to obtaining regularity and long-time bounds on these
propagated functionals that are strong enough to make the constants in the expansion
uniform in time.

The section is organized accordingly. In
Subsection~\ref{subsec-flow-level}, we study functionals of the decoupled flow
\eqref{eq:def-decoupl-mcvlasov-flow}. In particular, we prove the required fixed-time
regularity in Proposition~\ref{prop:fixed-time-flow-u-abs-assump}, together with the
decay estimates for the first derivatives in
Lemma~\ref{lem:first-derivatives-decay} and for the higher mixed derivatives in
Lemma~\ref{lem:uniform-time-second-variation}. These estimates provide the core
long-time control needed in the sequel. In
Subsection~\ref{S:UniformTimeWeakExpansionFunct}, we then apply these bounds to the
specific functionals $\Phi_1$ and $\Phi_2$ and derive the desired uniform-in-time weak
expansion, proving Proposition~\ref{prop:uniform-time-weak-expansion-abs-assump}.
Finally, in Subsection~\ref{subsec-coeff-level}, we explain the level of generality at
which the argument really operates: the essential input is the decay of the derivatives of
the decoupled flow along the McKean--Vlasov law flow
$\left( \mu_t^\mu \right)_{t \ge 0}$. In this sense,
Assumption~\ref{ass:coeff-level-new} should be viewed only as a concrete sufficient
condition for the more intrinsic Assumption~\ref{ass:flow-level-new-abstract}.

\subsection{Fixed-time regularity of functionals of the decoupled flow.}
\label{subsec-flow-level}
Fix $ x,v \in \R^d $, $ \mu \in \Ptwo $, and $ h \in \R^d $ and recall the decoupled flow associated with the McKean--Vlasov law flow
$ \left( \mu_t^\mu \right)_{t \ge 0} $, as defined via \eqref{eq:def-decoupl-mcvlasov-flow},
\begin{equation*}
X_t^{x,\mu}
=
x
+
\int_0^t b\left( X_s^{x,\mu},\mu_s^\mu \right) ds
+
\int_0^t \sigma\left( X_s^{x,\mu},\mu_s^\mu \right) dB_s.
\end{equation*}
The next proposition establishes the fixed-time regularity of functionals of the decoupled flow.
\begin{proposition}[Fixed-time regularity of functionals of the decoupled flow]
\label{prop:fixed-time-flow-u-abs-assump}
Let Assumption \ref{ass:flow-smothness} hold. Let $ \varphi \in C_b^7\left( \R^d \right) $.
For $ t \ge 0 $, $ x \in \R^d $, and $ \mu \in \Ptwo $, let $ X_t^{x,\mu} $ be the decoupled flow
associated with the McKean--Vlasov equation, and define
\begin{equation*}
u_t\left( x,\mu \right) = \E\left( \varphi\left( X_t^{x,\mu} \right) \right).
\end{equation*}
Then, for every fixed $ t \ge 0 $, all multi-indices $ \gamma $ and $ \beta $, and
  every integer $ n \ge 0 $ with $1 \le \abs{\gamma} + \abs{\beta} + n
  \le 7$, the mixed derivative
\begin{equation*}
\partial_x^\gamma \partial_v^\beta \partial_\mu^n
u_t\left( x,\mu \right)\left( v_1,\dots,v_n \right)
\end{equation*}
exists and is continuous in $\left( x,\mu,v_1,\dots,v_n \right)$. More precisely, every such derivative is a finite sum of terms of the form
\begin{equation}
\label{eq:fixed-time-ut-structure-abs-assump}
\E\left(
\nabla^r \varphi\left( X_t^{x,\mu} \right)
\left[
\partial_x^{\gamma_1} \partial_v^{\beta_1}\partial_\mu^{n_1}
X_t^{x,\mu}\left( \mathbf{v}_1 \right),\dots,
\partial_x^{\gamma_r} \partial_v^{\beta_r}\partial_\mu^{n_r}
X_t^{x,\mu}\left( \mathbf{v}_r \right)
\right]
\right),
\end{equation}
where $ 1 \le r \le 7 $, for each $1 \leq j \leq r$, $\mathbf{v}_j$
denotes an $n_j$-tuple of Lions directions, and each
$\partial_x^{\gamma_j} \partial_v^{\beta_j}\partial_\mu^{n_j}
X_t^{x,\mu}\left( \mathbf{v}_j \right)$
is a mixed derivative of the flow of positive total order $1 \le \abs{\gamma_j} + \abs{\beta_j} + n_j
  \le 7$.
\end{proposition}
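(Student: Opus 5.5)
The plan is to differentiate under the expectation and to use a Faà di Bruno (chain rule) formula in which the Lions derivative is treated on the same footing as the spatial derivatives. Fix $t\ge 0$. Since $\varphi\in C_b^7$, the composite $\varphi\left(X_t^{x,\mu}\right)$ is a smooth function of the random flow. By Assumption~\ref{ass:flow-smothness}, the flow is differentiable in $x$, $\mu$ and the Lions directions up to order $7$, with jointly continuous versions in $L^p$.

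For the first-order derivatives, the directional derivative in $x$ gives $\partial_x u_t(x,\mu)=\E\left(\nabla\varphi\left(X_t^{x,\mu}\right)\partial_x X_t^{x,\mu}\right)$. The Lions derivative gives $\partial_\mu u_t(x,\mu)(v)=\E\left(\nabla\varphi\left(X_t^{x,\mu}\right)\partial_\mu X_t^{x,\mu}(v)\right)$. For the latter I use the Lions chain rule: lift $\mu$ to a random variable $\xi$ on an auxiliary probability space and differentiate $\E\,\varphi\left(X_t^{x,\mathcal L(\xi)}\right)$ in the Fréchet sense along $\xi$. This is legitimate because $\varphi$ is $C^1$ with bounded derivative and the flow is $\mu$-differentiable in $L^p$.

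To interchange derivative and expectation, I write difference quotients. I apply the mean value theorem to $\varphi$ and use the bound $\abs{\nabla^r\varphi}\le\norm{\varphi}_{C_b^7}$. The $L^p$ bounds on the flow derivatives, locally uniform in the parameters by joint continuity, give uniform integrability, so Vitali's theorem lets the limit pass inside $\E$.

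Higher orders then follow by induction on the total order $k=\abs{\gamma}+\abs{\beta}+n\le 7$. The induction hypothesis is that every derivative of order $k$ is a finite sum of terms of the form \eqref{eq:fixed-time-ut-structure-abs-assump} with $r\le k$. Applying one more operator ($\partial_{x_i}$, $\partial_{v_j}$, or $\partial_\mu$ producing a new direction $v_{n+1}$) to such a term has two effects, by the product rule for the multilinear expression:
\begin{enumerate}
\item[(a)] it hits $\nabla^r\varphi$, which produces $\nabla^{r+1}\varphi$ with a new first-order flow-derivative argument;
\item[(b)] it hits one of the arguments, which raises that argument's order by one.
\end{enumerate}
A $\partial_v$ derivative never falls on $\nabla^r\varphi\left(X_t^{x,\mu}\right)$, since $X_t^{x,\mu}$ does not depend on the Lions directions. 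The resulting terms stay within order $\le 7$ for both $\varphi$ and the flow. Because $r\le 7$ and each $\abs{\gamma_j}+\abs{\beta_j}+n_j\ge1$, the structure claim follows.

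Continuity in $(x,\mu,v_1,\dots,v_n)$ comes from dominated convergence. Take a convergent sequence of parameters. By the assumption of jointly continuous versions, the flow derivatives converge almost surely along a subsequence, and in $L^p$ (using the $L^p$ bounds for uniform integrability). Together with continuity and boundedness of $\nabla^r\varphi$, this gives convergence of each term; Hölder's inequality controls products of up to $7$ factors, using $p\ge 8$ moments.

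The main obstacle is making rigorous the Lions-derivative step for the mixed terms. This is where $\partial_\mu$ acts on a quantity that already depends on earlier directions $v_1,\dots,v_n$, and where the $L^2$-Fréchet lift must be compatible with the pointwise-in-$v$ representation. I would handle it as in \cite{CST2022} and \cite{CrisanMcMurray2018}: check that the candidate derivative is jointly continuous and bounded in $L^p$ locally uniformly. A Lipschitz-in-$\mu$-type continuity then identifies the Fréchet derivative of the lift with $\E\left(\partial_\mu(\cdot)(\xi')\,\cdot\,\eta'\right)$ for an independent copy $\xi'$. This yields the formula with the new direction $v_{n+1}$.
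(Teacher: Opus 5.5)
Your proposal follows essentially the same route as the paper: differentiate under the expectation with the chain and product rules, and obtain each new Lions derivative through the $L^2$ lift and the representation $D_\xi X_t^{x,[\xi]}[\eta]=\widetilde{\E}\bigl(\partial_\mu X_t^{x,\mu}(\widetilde\xi)\,\widetilde\eta\bigr)$ (the paper takes this from Buckdahn--Li--Peng--Rainer); then iterate up to order $7$, and get continuity from the jointly continuous versions together with uniform integrability and Vitali's theorem. One caveat applies to your step ``locally uniform in the parameters by joint continuity'' and equally to the paper's own argument: joint continuity plus finite moments at each fixed parameter does not by itself give moment bounds that are uniform over a neighborhood of parameters, so both the interchange of derivative and expectation and the Vitali step need genuinely locally uniform $L^p$ bounds on the flow derivatives (which the Crisan--McMurray Kusuoka--Stroock class estimates invoked in Proposition~\ref{prop:fixed-time-flow-u} do supply) rather than Assumption~\ref{ass:flow-smothness} as literally stated.
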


\begin{proof}
Fix $ \mu \in \Ptwo $ and only consider differentiation in the state variable $x$.
Since $ \varphi \in C_b^7\left( \R^d \right) $ and the derivatives of
$ X_t^{x,\mu} $ with respect to $x$ up to order $7$ exist by
Assumption \ref{ass:flow-smothness}, repeated applications of the usual
chain rule give, for every multi-index $ \gamma $ with $ 1 \le \abs{\gamma} \le 7 $,
\begin{equation*}
\partial_x^\gamma u_t\left( x,\mu \right)
=
\partial_x^\gamma \E\left( \varphi\left( X_t^{x,\mu} \right) \right).
\end{equation*}
Since all derivatives of $ \varphi $ up to order $ 7 $ are bounded, and all derivatives of the flow
appearing after differentiation have finite moments of every order by
Assumption \ref{ass:flow-smothness}, we may differentiate
under the expectation.
Thus, $ \partial_x^\gamma u_t\left( x,\mu \right) $ exists and is a finite sum of terms of the form
\begin{equation*}
\E\left(
\nabla^r \varphi\left( X_t^{x,\mu} \right)
\left[
\partial_x^{\Lambda_1}
X_t^{x,\mu},\dots,\partial_x^{\Lambda_r}
X_t^{x,\mu}
\right]
\right),
\end{equation*}
where each $\partial_x^{\Lambda_j}
X_t^{x,\mu}$ is a state derivative of the flow of positive order
$\abs{\Lambda_j} \leq \abs{\gamma} $.
This is exactly of the form
\eqref{eq:fixed-time-ut-structure-abs-assump}.
\\~\\
We now treat derivatives with respect to the measure variable.
Let $ \xi \in L^2\left( \Omega;\R^d \right) $ have law $ \mu $, and define the lift
\begin{equation*}
\widetilde u_t\left( x,\xi \right)
=
\E\left( \varphi\left( X_t^{x,\left[ \xi \right]} \right) \right).
\end{equation*}
We first compute its Fr\'echet derivative in the $\xi$-variable. By \cite[Proposition~4.2 and Remark~4.4]{BuckdahnLiPengRainer2017}, for every
$ \eta \in L^2\left( \Omega;\R^d \right) $,
\begin{equation}
\label{eq:fixed-time-flow-lift-derivative-abs-assump}
D_\xi X_t^{x,\left[ \xi \right]}\left[ \eta \right]
=
\widetilde{\E}\left(
\partial_\mu X_t^{x,\mu}\left( \widetilde \xi \right)\widetilde \eta
\right),
\end{equation}
where $\mu = \left[ \xi \right]$. Since $ \varphi \in C_b^7\left( \R^d \right) $, the Banach-space chain rule yields
\begin{equation*}
D_\xi \widetilde u_t\left( x,\xi \right)\left[ \eta \right]
=
\E\left(
\nabla \varphi\left( X_t^{x,\mu} \right)
D_\xi X_t^{x,\left[ \xi \right]}\left[ \eta \right]
\right).
\end{equation*}
Substituting \eqref{eq:fixed-time-flow-lift-derivative-abs-assump} and using Fubini's theorem, we obtain
\begin{align*}
D_\xi \widetilde u_t\left( x,\xi \right)\left[ \eta \right]
=
\E\left(
\nabla \varphi\left( X_t^{x,\mu} \right)
\cdot
\widetilde{\E}\left(
\partial_\mu X_t^{x,\mu}\left( \widetilde \xi \right)\widetilde \eta
\right)
\right)
=
\widetilde{\E}\left(
\E\left(
\nabla \varphi\left( X_t^{x,\mu} \right)
\cdot
\partial_\mu X_t^{x,\mu}\left( \widetilde \xi \right)
\right)
\widetilde \eta
\right).
\end{align*}
Therefore,
\begin{equation*}
\partial_\mu u_t\left( x,\mu \right)\left( v \right)
=
\E\left(
\nabla \varphi\left( X_t^{x,\mu} \right)
\partial_\mu X_t^{x,\mu}\left( v \right)
\right).
\end{equation*}
We next differentiate once more in the Lions variable. Starting from the preceding formula, and using again that the relevant mixed
derivatives of the flow exist by Assumption \ref{ass:flow-smothness}, we may differentiate under
the expectation for the same reason as before.
When differentiating
\begin{equation*}
\nabla \varphi\left( X_t^{x,\mu} \right)
\partial_\mu X_t^{x,\mu}\left( v \right)
\end{equation*}
with respect to $\mu$ in direction $\bar v$, we obtain two terms: one from differentiating
the factor $\nabla \varphi\left( X_t^{x,\mu} \right)$, namely $\nabla^2 \varphi\left( X_t^{x,\mu} \right)
\left[
\partial_\mu X_t^{x,\mu}\left( v \right),
\partial_\mu X_t^{x,\mu}\left( \bar v \right)
\right]$, and one from differentiating the factor
$\partial_\mu X_t^{x,\mu}\left( v \right)$.
Therefore,
\begin{equation*}
\partial_{\mu\mu}^2 u_t\left( x,\mu \right)\left( v,\bar v \right)
=
\E\left(
\nabla^2 \varphi\left( X_t^{x,\mu} \right)
\left[
\partial_\mu X_t^{x,\mu}\left( v \right),
\partial_\mu X_t^{x,\mu}\left( \bar v \right)
\right]
\right)
+
\E\left(
\nabla \varphi\left( X_t^{x,\mu} \right)
\cdot
\partial_{\mu\mu}^2 X_t^{x,\mu}\left( v,\bar v \right)
\right).
\end{equation*}
The same argument can now be iterated. Take any mixed derivative of $u_t$ already constructed.
By what we have shown, it is a finite sum of expectation terms in which a bounded derivative of
$\varphi$ is evaluated at $X_t^{x,\mu}$ and applied to mixed derivatives of the flow.
To take one additional state derivative, we differentiate inside the expectation and apply the
ordinary product rule and chain rule.
To take one additional Lions derivative, we lift the corresponding expectation term to
$L^2 \left( \Omega;\R^d \right)$, use the representation
\eqref{eq:fixed-time-flow-lift-derivative-abs-assump} and apply again the Banach-space chain rule and Leibniz rule.
Because every mixed derivative of the flow of total order at most $ 7 $ exists by Assumption \ref{ass:flow-smothness},
iterating this argument finitely many times proves the existence claim
of part (ii).
\\~\\
We now prove the continuity statement. Fix one mixed derivative of $ u_t $ of positive total order at most $ 7 $.
By the previous step, it is a finite sum of terms of the form \eqref{eq:fixed-time-ut-structure-abs-assump}.
It is therefore enough to prove continuity for one such term.
Fix a term of the form
\begin{equation*}
T\left( x,\mu,v_1,\dots,v_n \right)
=
\E\left(
\nabla^r \varphi\left( X_t^{x,\mu} \right)
\left[
\partial_x^{\gamma_1} \partial_v^{\beta_1}\partial_\mu^{n_1}
X_t^{x,\mu}\left( \mathbf{v}_1 \right),\dots,
\partial_x^{\gamma_r} \partial_v^{\beta_r}\partial_\mu^{n_r}
X_t^{x,\mu}\left( \mathbf{v}_r \right)
\right]
\right),
\end{equation*}
where each
$\partial_x^{\gamma_j} \partial_v^{\beta_j}\partial_\mu^{n_j}
X_t^{x,\mu}\left( \mathbf{v}_j \right)$
is a mixed derivative of the flow of positive total order at most $7$, the integers
$n_1,\dots,n_r$ satisfy
$n_1+\cdots+n_r=n$, and the families
$\mathbf{v}_1,\dots,\mathbf{v}_r$ form a partition of
$\left( v_1,\dots,v_n \right)$.
Let $\left( x_k,\mu_k,v_1^k,\dots,v_n^k \right)
\longrightarrow
\left( x,\mu,v_1,\dots,v_n \right)$ in $\R^d \times \Ptwo \times \left(
\R^d \right)^n$ as $k \to \infty$. By Assumption \ref{ass:flow-smothness}, we may work with jointly continuous versions
of $X_t^{x,\mu}$ and of the mixed derivatives $\partial_x^{\gamma_j} \partial_v^{\beta_j}\partial_\mu^{n_j}
X_t^{x,\mu}\left( \mathbf{v}_j \right)$, $j = 1,\dots,r$, in all
deterministic parameters. Therefore, $X_t^{x_k,\mu_k} \longrightarrow
X_t^{x,\mu}$ and, for each $ j = 1,\dots,r $,
\begin{equation*}
\partial_x^{\gamma_j} \partial_v^{\beta_j}\partial_\mu^{n_j}
X_t^{x_k,\mu_k}\left( \mathbf{v}_j^k \right)
\longrightarrow
\partial_x^{\gamma_j} \partial_v^{\beta_j}\partial_\mu^{n_j}
X_t^{x,\mu}\left( \mathbf{v}_j \right)
\end{equation*}
almost surely, where $\mathbf{v}_j^k$ denotes the subfamily of
$\left( v_1^k,\dots,v_n^k \right)$ corresponding to $\mathbf{v}_j$.
Since the derivatives of $ \varphi $ are bounded, it follows that the integrand in
$ T\left( x_k,\mu_k,v_1^k,\dots,v_n^k \right) $
converges almost surely to the integrand in
$ T\left( x,\mu,v_1,\dots,v_n \right) $.
\\~\\
It remains to justify passage to the limit under the expectation.
By Assumption \ref{ass:flow-smothness}, all flow derivatives of total order at most $ 7 $ have finite moments of every order.
Hence, there exists a neighborhood $U$ of $\left( x,\mu,v_1,\dots,v_n \right)$ such that the family of random variables
\begin{equation*}
\nabla^r \varphi\left( X_t^{\bar x,\bar \mu} \right)
\left[
\partial_x^{\gamma_1} \partial_v^{\beta_1}\partial_\mu^{n_1}
X_t^{\bar x,\bar \mu}\left( \bar{\mathbf{v}}_1 \right),\dots,
\partial_x^{\gamma_r} \partial_v^{\beta_r}\partial_\mu^{n_r}
X_t^{\bar x,\bar \mu}\left( \bar{\mathbf{v}}_r \right)
\right],
\quad
\left( \bar x,\bar \mu,\bar v_1,\dots,\bar v_n \right) \in U,
\end{equation*}
has uniformly bounded moments of some order strictly larger than $ 1 $.
Therefore, this family is uniformly integrable.
Since these integrands converge almost surely and form a uniformly integrable family,
Vitali's theorem implies that they converge in $ L^1 \left( \Omega \right)$. Therefore,
\begin{equation*}
T\left( x_k,\mu_k,v_1^k,\dots,v_n^k \right)
\longrightarrow
T\left( x,\mu,v_1,\dots,v_n \right).
\end{equation*}
Thus, each term of the form \eqref{eq:fixed-time-ut-structure-abs-assump} is continuous, and therefore every
mixed derivative of $ u_t $ of positive total order at most $ 7 $ is
continuous, which concludes the proof.
\end{proof}

\begin{proposition}[Fixed-time regularity of functionals of the decoupled flow]
\label{prop:fixed-time-flow-u}
Assume that $ b $ and $ \sigma $ are bounded and belong to the class
$ C_{b,\mathrm{Lip}}^{7,7}\left( \R^d \times \Ptwo \right) $. Then,
Assumption \ref{ass:flow-smothness} holds.
\end{proposition}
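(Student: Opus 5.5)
The plan is to follow the strategy of Crisan--McMurray \cite{CrisanMcMurray2018} and Buckdahn--Li--Peng--Rainer \cite{BuckdahnLiPengRainer2017}. The approach is induction on the total order $k=\abs{\gamma}+\abs{\beta}+n$. At each step one formally differentiates the decoupled SDE \eqref{eq:def-decoupl-mcvlasov-flow}, shows that the resulting linear SDE is well posed with moments of every order, and identifies its solution with the derivative.

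The first step concerns the law flow. Since $\mu_s^\mu=\Law(X_s^{\xi,\mu})$ with $\xi\sim\mu$, the Lions derivative of $X_t^{x,\mu}$ involves $\partial_\mu$ of the coefficients evaluated along $\mu_s^\mu$, composed with the derivative of the lifted flow $X_s^{\xi,[\xi]}$ in the Fréchet sense. I would first treat the lifted McKean--Vlasov SDE on $L^2(\Omega)$. By \cite[Proposition~4.2]{BuckdahnLiPengRainer2017}, the first-order $L^2$-derivatives $\partial_x X_t^{x,\mu}$ and $\partial_\mu X_t^{x,\mu}(v)$ exist. They solve
\begin{align*}
\partial_\mu X_t^{x,\mu}(v)&=\int_0^t \partial_x b\,\partial_\mu X_s^{x,\mu}(v)\,ds+\int_0^t \widetilde\E\Big(\partial_\mu b(X_s^{x,\mu},\mu_s^\mu)(\widetilde X_s^{v,\mu})\,\partial_x\widetilde X_s^{v,\mu}\\
&\qquad+\partial_\mu b(X_s^{x,\mu},\mu_s^\mu)(\widetilde X_s^{\widetilde\xi,\mu})\,\partial_\mu \widetilde X_s^{\widetilde\xi,\mu}(v)\Big)ds+(\text{analogous }\sigma\text{ terms})\,dB_s,
\end{align*}
which is a linear SDE with bounded coefficients, since $b,\sigma\in C^{7,7}_{b,\mathrm{Lip}}$. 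Burkholder--Davis--Gundy and Gronwall then give $\E\sup_{s\le t}\abs{\cdot}^p\le C_{p,t}$ for every $p$, uniformly in $(x,\mu,v)$.

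For the inductive step, suppose all derivatives of order $<k$ exist and have uniform $L^p$ bounds on $[0,t]$. Differentiating the equation for an order-$(k-1)$ derivative once more, in $x$, $v$ or $\mu$, gives a linear SDE for the order-$k$ derivative. Its top-order term is the same as in the first-order equation, namely $\partial_x b,\partial_x\sigma$ and the $\widetilde\E[\partial_\mu b\,\cdot]$ terms. The source terms are polynomial in lower-order derivatives, multiplied by derivatives of $b,\sigma$ of order $\le k\le 7$. These are bounded, so Hölder's inequality together with the inductive hypothesis controls every moment of the source. Existence of the derivative follows by showing that the difference quotients converge in $L^p$ to the solution of this linear SDE. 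This uses the Lipschitz continuity of the order-$k$ derivatives of the coefficients, which $C^{7,7}_{b,\mathrm{Lip}}$ provides, and a Gronwall estimate on the difference quotient minus the candidate.

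Joint continuity in $(t,x,\mu,v_1,\dots,v_n)$ comes from the same stability estimates. For two parameter values, the difference of the solutions of the linear SDEs satisfies
\begin{equation*}
\E\sup_{s\le t}\abs{\cdot}^p\le C\big(\abs{x-x'}+W_2(\mu,\mu')+\textstyle\sum\abs{v_r-v_r'}\big)^p ,
\end{equation*}
again by the Lipschitz property of the coefficient derivatives and the induction. Time increments are controlled by $C\abs{t-t'}^{p/2}$. Kolmogorov's continuity criterion then yields a jointly continuous version in the finite-dimensional parameters $(t,x,v)$. For $\mu$, continuity holds in $L^p$, and this is what the application in Proposition~\ref{prop:fixed-time-flow-u-abs-assump} actually requires. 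If needed, one can obtain an a.s.\ continuous version by realizing $\mu$ through a fixed lift $\xi$ and working in $L^2$.

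The main obstacle is the bookkeeping for higher Lions derivatives. Each $\partial_\mu$ acting on a term $\widetilde\E[\partial_\mu b(\cdot)(\widetilde X^{\widetilde\xi})\cdots]$ produces new independent copies, and the derivatives of $\widetilde X^{v,\mu}$ with respect to both $v$ and $\mu$ enter. To handle this, I would organize the induction as a joint statement for $X^{x,\mu}$ and $X^{\xi,\mu}$, and track the derivatives through the generic structure of \cite{CrisanMcMurray2018}. That structure consists of a linear equation in the top derivative plus a multilinear forcing term, so a single Gronwall argument closes each order up to $7$.
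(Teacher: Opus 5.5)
Your proposal is essentially the paper's argument: the paper simply fixes $T_t>t$ and applies \cite[Theorem~3.2]{CrisanMcMurray2018} on $[0,T_t]$, whose proof is precisely the induction over linear variational SDEs you outline, and reads existence, continuity and all $L^p$-moments off the membership of the derivatives in the Kusuoka--Stroock classes. Your caveat about the measure variable applies to the paper's proof as well: those classes give continuity in $\mu$ only as maps into $L^p(\Omega)$, which is all the Vitali step in Proposition~\ref{prop:fixed-time-flow-u-abs-assump} needs, since convergence in probability suffices there. You should drop the remark about a fixed lift in $L^2$, because Kolmogorov's criterion does not produce an almost surely continuous version in an infinite-dimensional parameter such as $\mu\in\Ptwo$.
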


\begin{proof}
Fix $ t \ge 0 $ and choose $ T_t > t $. Since $ b $ and $ \sigma $ belong to
$ C_{b,\mathrm{Lip}}^{7,7}\left( \R^d \times \Ptwo \right) $, we may apply
\cite[Theorem~3.2]{CrisanMcMurray2018} on the compact time interval
$ \left[ 0,T_t \right] $.
It follows that, for all multi-indices $ \gamma $ and $ \beta $, and every integer
$ n \ge 0 $ such that $\abs{\gamma} + \abs{\beta} + n \le 7$, the derivative
\begin{equation*}
\partial_x^\gamma \partial_v^\beta \partial_\mu^n
X_s^{x,\mu}\left( v_1,\dots,v_n \right)
\end{equation*}
exists for every $ s \in \left[ 0,T_t \right] $. Evaluating at the
fixed time $s=t$ yields the existence claim. Moreover, \cite[Theorem~3.2]{CrisanMcMurray2018} places these
derivatives in the Kusuoka--Stroock classes defined in
\cite{CrisanMcMurray2018}, which in particular implies that they are
jointly continuous and have finite $ L^p \left( \Omega;\R^d\right) $-moments for every $ p \ge 1 $ on compact time intervals.
\end{proof}

\begin{lemma}[Spatial coercivity]
\label{lem:spatial-coercivity}
Assume Assumption~\ref{ass:coeff-level-new}. Then, for every $ p \in \mathcal P $, every
$ x \in \R^d $, every $ \mu \in \Ptwo $, and every $ h \in \R^d $, one has
\begin{equation}
\label{eq:spatial-coercivity-p}
2 \left\langle h, \partial_x b\left( x,\mu \right)\left[ h \right] \right\rangle
+
\left( p-1 \right)\norm{\partial_x \sigma\left( x,\mu \right)\left[ h \right]}^2
\le
-\kappa_p \abs{h}^2.
\end{equation}
\end{lemma}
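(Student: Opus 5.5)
The plan is to obtain \eqref{eq:spatial-coercivity-p} as the infinitesimal version of the monotonicity inequality \eqref{eq:uniform-time-monotonicity-p}. Fix $p\in\mathcal P$, $x,h\in\R^d$, $\mu\in\Ptwo$. The case $h=0$ is trivial, so take $h\neq 0$. For $\varepsilon>0$ we apply \eqref{eq:uniform-time-monotonicity-p} with the pair $(x+\varepsilon h, x)$ and the same measure $\mu$. This gives
\begin{equation*}
2\left\langle \varepsilon h, b\left(x+\varepsilon h,\mu\right)-b\left(x,\mu\right)\right\rangle
+\left(p-1\right)\norm{\sigma\left(x+\varepsilon h,\mu\right)-\sigma\left(x,\mu\right)}^2
\le -\kappa_p\varepsilon^2\abs{h}^2 .
\end{equation*}
We then divide by $\varepsilon^2$.

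Next we pass to the limit $\varepsilon\downarrow 0$. Since $b,\sigma\in C^{7,7}_{b,\mathrm{Lip}}$ by Assumption~\ref{ass:coeff-level-new}, and in any case by Assumption~\ref{ass:second-malliavin-uniform}(i), the maps $b(\cdot,\mu)$ and $\sigma(\cdot,\mu)$ are differentiable in $x$. Hence
\begin{equation*}
\varepsilon^{-1}\left(b\left(x+\varepsilon h,\mu\right)-b\left(x,\mu\right)\right)\to\partial_x b\left(x,\mu\right)\left[h\right],
\qquad
\varepsilon^{-1}\left(\sigma\left(x+\varepsilon h,\mu\right)-\sigma\left(x,\mu\right)\right)\to\partial_x\sigma\left(x,\mu\right)\left[h\right],
\end{equation*}
the second limit holding in the matrix norm. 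The first term divided by $\varepsilon^2$ equals $2\langle h,\varepsilon^{-1}(b(x+\varepsilon h,\mu)-b(x,\mu))\rangle$ and converges to $2\langle h,\partial_x b(x,\mu)[h]\rangle$. The second term divided by $\varepsilon^2$ equals $(p-1)\norm{\varepsilon^{-1}(\sigma(x+\varepsilon h,\mu)-\sigma(x,\mu))}^2$, and it converges to $(p-1)\norm{\partial_x\sigma(x,\mu)[h]}^2$ by continuity of the norm. The right-hand side equals $-\kappa_p\abs{h}^2$ and does not depend on $\varepsilon$, so the limit yields \eqref{eq:spatial-coercivity-p}.

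No step is a real obstacle. The only point to check is that the norm on $\sigma$ in \eqref{eq:uniform-time-monotonicity-p} is the same (Hilbert--Schmidt) norm used in \eqref{eq:spatial-coercivity-p}, and that both are continuous, which holds in finite dimensions. A mean-value representation such as $b(x+\varepsilon h)-b(x)=\int_0^1\partial_x b(x+\theta\varepsilon h)[\varepsilon h]\,d\theta$ together with continuity of $\partial_x b$ would also justify the limit, but plain Fréchet differentiability is already enough.
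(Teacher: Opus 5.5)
Your proposal is correct and follows essentially the same approach as the paper. Both proofs plug the pair $x+\varepsilon h$, $x$ into \eqref{eq:uniform-time-monotonicity-p} (you swap the roles of $x$ and $y$, which is immaterial), divide by $\varepsilon^2$, and let $\varepsilon\to 0$ using the differentiability of $b$ and $\sigma$ in the state variable; you simply write out the limit passage that the paper leaves implicit.
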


\begin{proof}
Fix $ p \in \mathcal P $, $ x \in \R^d $, $ \mu \in \Ptwo $, and $ h \in \R^d $. In
\eqref{eq:uniform-time-monotonicity-p}, take $ y = x + r h $, divide by $ r^2 $, and let
$ r \to 0 $. Since $ b $ and $ \sigma $ are differentiable in the state variable, we obtain
\eqref{eq:spatial-coercivity-p}.
\end{proof}

\begin{lemma}[Decay of the first derivatives of the decoupled flow]
\label{lem:first-derivatives-decay}
Assume Assumption~\ref{ass:coeff-level-new}. Fix $ x \in \R^d $, $ \mu \in \Ptwo $, let
$ X_t^{x,\mu} $ be the decoupled flow, and $\omega$ be the constant
defined in \eqref{eq:def-omega-uniform-time}. Then, the following hold.

\begin{enumerate}
\item[(i)] For every $ h \in \R^d $, the first spatial derivative
$ \partial_x X_t^{x,\mu}\left[ h \right] $ exists and solves
\begin{equation}
\label{eq:first-variation-x}
d\left( \partial_x X_t^{x,\mu}\left[ h \right] \right)
=
\partial_x b\left( X_t^{x,\mu}, \mu_t^\mu \right)
\partial_x X_t^{x,\mu}\left[ h \right] dt
+
\partial_x \sigma\left( X_t^{x,\mu}, \mu_t^\mu \right)
\partial_x X_t^{x,\mu}\left[ h \right] dB_t,
\end{equation}
with initial condition $\partial_x X_0^{x,\mu}\left[ h \right] =
h$. Moreover, for every $ p \in \mathcal P $ and all $t \ge 0$,
\begin{equation}
\label{eq:first-variation-x-lp}
\E\left(
\abs{\partial_x X_t^{x,\mu}\left[ h \right]}^p
\right)
\le
e^{-\frac{p\kappa_p}{2} t} \abs{h}^p
\le
e^{-p\omega t} \abs{h}^p.
\end{equation}

\item[(ii)] For every $ v \in \R^d $, the Lions derivative
$ \partial_\mu X_t^{x,\mu}\left( v \right) $ exists and solves
\begin{align}
\label{eq:first-variation-mu}
d\left( \partial_\mu X_t^{x,\mu}\left( v \right) \right)
&=
\Bigg(
\partial_x b\left( X_t^{x,\mu}, \mu_t^\mu \right)
\partial_\mu X_t^{x,\mu}\left( v \right)
+
\widetilde{\E}\left(
\partial_\mu b\left( X_t^{x,\mu}, \mu_t^\mu \right)
\left( \widetilde{X}_t^{v,\mu} \right)
\partial_x \widetilde{X}_t^{v,\mu}
\right)
\nonumber\\
&\quad
+
\widetilde{\E}\left(
\partial_\mu b\left( X_t^{x,\mu}, \mu_t^\mu \right)
\left( \widetilde{X}_t^{\widetilde{\xi},\mu} \right)
\partial_\mu \widetilde{X}_t^{\widetilde{\xi},\mu}\left( v \right)
\right)
\Bigg) dt
\nonumber\\
&\quad
+
\Bigg(
\partial_x \sigma\left( X_t^{x,\mu}, \mu_t^\mu \right)
\partial_\mu X_t^{x,\mu}\left( v \right)
+
\widetilde{\E}\left(
\partial_\mu \sigma\left( X_t^{x,\mu}, \mu_t^\mu \right)
\left( \widetilde{X}_t^{v,\mu} \right)
\partial_x \widetilde{X}_t^{v,\mu}
\right)
\nonumber\\
&\quad
+
\widetilde{\E}\left(
\partial_\mu \sigma\left( X_t^{x,\mu}, \mu_t^\mu \right)
\left( \widetilde{X}_t^{\widetilde{\xi},\mu} \right)
\partial_\mu \widetilde{X}_t^{\widetilde{\xi},\mu}\left( v \right)
\right)
\Bigg) dB_t,
\end{align}
with initial condition $\partial_\mu X_0^{x,\mu}\left( v \right) =
0$. Moreover, for every $ p \in \mathcal P $, there exists a constant
$ C_p > 0 $ such that, for all $t \ge 0$,
\begin{equation*}
\sup_{x,\mu,v}
\E\left(
\abs{\partial_\mu X_t^{x,\mu}\left( v \right)}^p
\right)
\le
C_p e^{-p\omega t}.
\end{equation*}
\end{enumerate}
\end{lemma}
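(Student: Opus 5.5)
For part (i), I will obtain the existence of $\partial_x X_t^{x,\mu}[h]$ and the linear SDE \eqref{eq:first-variation-x} from Assumption~\ref{ass:flow-smothness} (or Proposition~\ref{prop:fixed-time-flow-u}). The point is that the law flow $\mu_t^\mu$ does not depend on $x$, so differentiating the decoupled equation \eqref{eq:def-decoupl-mcvlasov-flow} in $x$ gives the standard first variation equation. For the moment bound, set $Y_t=\partial_x X_t^{x,\mu}[h]$ and apply It\^o's formula to $\abs{Y_t}^p$ with $p\in\mathcal P$. The drift of $\abs{Y_t}^p$ is bounded by
\[
\frac p2\abs{Y_t}^{p-2}\left(2\langle Y_t,\partial_x b\, Y_t\rangle+(p-1)\norm{\partial_x\sigma\, Y_t}^2\right),
\]
using $\abs{\langle Y,\sigma'Y\rangle}^2\le \abs{Y}^2\norm{\sigma' Y}^2$ to merge the Hessian term into the $(p-1)$ factor. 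Lemma~\ref{lem:spatial-coercivity} bounds this by $-\frac{p\kappa_p}{2}\abs{Y_t}^p$. I will localize with stopping times so the martingale term has zero mean, then apply Gronwall and Fatou to get $\E\abs{Y_t}^p\le e^{-p\kappa_p t/2}\abs h^p$. The second inequality in \eqref{eq:first-variation-x-lp} follows because $\omega\le\kappa_p/2$ by \eqref{eq:def-omega-uniform-time}.

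For part (ii), existence and the SDE \eqref{eq:first-variation-mu} come from differentiating the lifted equation in $\xi$, as in the proof of Proposition~\ref{prop:fixed-time-flow-u-abs-assump}, using \cite{BuckdahnLiPengRainer2017}. Differentiating $\mu_t^\mu=\Law(X_t^{\xi,\mu})$ through the Lions derivative of $b(\cdot,\mu_t^\mu)$ produces two source terms: one where the perturbation hits the starting point ($\partial_x\widetilde X^{v,\mu}_t$), and one where it propagates through the flow ($\partial_\mu\widetilde X^{\widetilde\xi,\mu}_t(v)$). For the decay, write $Z_t=\partial_\mu X_t^{x,\mu}(v)$ and set $m_p(t)=\sup_{x,\mu,v}\E\abs{Z_t}^p$, which is finite on compacts by Assumption~\ref{ass:flow-smothness}. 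The drift of $\abs{Z_t}^p$ has three parts:
\begin{itemize}
\item the homogeneous part, bounded by $-\frac{p\kappa_p}2\abs{Z}^p$ exactly as in (i);
\item the drift source terms, of size at most $\gamma(\E\abs{\partial_x\widetilde X^{v}}+\E\abs{\widetilde Z})$ by \eqref{eq:small-measure-dependence}, with the first decaying like $e^{-\omega t}$ by part (i);
\item the cross terms from the diffusion, $(p-1)\abs{Z}^{p-2}\langle\partial_x\sigma Z,S\rangle$ and $\frac{p-1}{2}\abs{Z}^{p-2}\abs S^2$, where $\abs S\le\gamma(\ldots)$.
\end{itemize}

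I will bound the sources using $\abs{a+b}^q\le 2^{q-1}(\abs a^q+\abs b^q)$ in $L^p$ form, with $\widetilde\E\abs{\widetilde Z}\le m_p^{1/p}$ by Jensen, and Young's inequality $\abs{Z}^{p-1}s\le \frac{p-1}{p}\abs Z^p+\frac1p s^p$ where appropriate. The target is a differential inequality
\[
\frac{d}{dt}m_p(t)\le -p\Big(\frac{\kappa_p}{2}-2^{\frac{p-1}{p}}\gamma-(p-1)2^{\frac{p-1}{p}}M_\sigma\gamma-\tfrac{p-1}{2}2^{\frac{2(p-1)}{p}}\gamma^2\Big)m_p(t)+C e^{-p\omega t}.
\]
The coefficient is precisely what \eqref{eq:def-omega-uniform-time} defines, so the effective rate is at least $p\omega$. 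The forcing term comes from the $\partial_x X$ source.

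The main obstacle is this step. The equation is self-referential: $\widetilde Z$ is an independent copy of the same family, evaluated at the random point $\widetilde\xi$. So I must take a supremum over $(x,v)$ before closing, and work with an integral (Duhamel) form rather than a pointwise derivative of a supremum. Concretely, I will write $e^{p\kappa_p t/2}\E\abs{Z_t}^p\le\int_0^t e^{p\kappa_p s/2}(\ldots)ds$, take suprema, and apply Gronwall to $\sup_{s\le t}e^{p\omega s}m_p(s)$. The inhomogeneous term contributes $\int_0^t e^{-p\omega s}e^{p\omega s}$-type integrals only if rates coincide. To avoid a linear-in-$t$ loss, I will absorb the $\partial_x X$ forcing using the strict gap $\kappa_p/2>\omega$ from (i), since that forcing decays like $e^{-p\kappa_p t/2}$, faster than $e^{-p\omega t}$. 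If that gap gets consumed in the bookkeeping, I will fall back on lowering $\omega$ by an arbitrarily small amount, or on a constant absorbing the resonance. A careful Young-splitting of the constants is needed to land on exactly the combination in \eqref{eq:def-omega-uniform-time}, and initial condition $Z_0=0$ then gives $m_p(t)\le C_pe^{-p\omega t}$.
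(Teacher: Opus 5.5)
Your proposal is correct and follows the paper's proof essentially step for step. Both use It\^o's formula for $\abs{Y_t}^p$ with the Hessian term merged into the $(p-1)$ factor, Lemma~\ref{lem:spatial-coercivity}, the sources bounded via \eqref{eq:small-measure-dependence}, Jensen and part~(i), Young's inequality with parameter $2^{(p-1)/p}\gamma$ (squared for the $\abs{Y}^{p-2}\norm{H}^2$ term, which reproduces exactly the combination in \eqref{eq:def-omega-uniform-time}), and Gronwall from $Z_0=0$. Your integrating-factor handling of the supremum, and your use of the forcing rate $p\kappa_p/2>p\omega$ (strict, since $\kappa_p/2-\omega\ge 2^{(p-1)/p}\gamma>0$), tighten two steps the paper treats loosely. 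The paper differentiates a supremum, and it bounds the forcing by $e^{-p\omega t}$, which taken literally only yields $t\,e^{-p\omega t}$; with your approach the fallbacks you list are never needed, and lowering $\omega$ would not give the stated rate anyway.
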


\begin{proof}
We start with the spatial derivative. Fix $ h \in \R^d $ and set $Y_t
= \partial_x X_t^{x,\mu}\left[ h \right]$. Differentiating the decoupled equation with respect to the initial condition gives
\eqref{eq:first-variation-x}. Fix $ p \in \mathcal P $. By It\^o's formula,
\begin{align*}
d\abs{Y_t}^p
&=
p \abs{Y_t}^{p-2}
\left\langle Y_t,
\partial_x b\left( X_t^{x,\mu}, \mu_t^\mu \right) Y_t
\right\rangle dt
+
p \abs{Y_t}^{p-2}
\left\langle Y_t,
\partial_x \sigma\left( X_t^{x,\mu}, \mu_t^\mu \right) Y_t
\right\rangle dB_t
\\
&\quad
+
\frac{p}{2} \abs{Y_t}^{p-2}
\norm{
\partial_x \sigma\left( X_t^{x,\mu}, \mu_t^\mu \right) Y_t
}^2 dt
+
\frac{p\left( p-2 \right)}{2}
\abs{Y_t}^{p-4}
\abs{
\left(
\partial_x \sigma\left( X_t^{x,\mu}, \mu_t^\mu \right) Y_t
\right)^\top
Y_t
}^2 dt.
\end{align*}
Using the fact that
\begin{equation*}
\abs{
\left(
\partial_x \sigma\left( X_t^{x,\mu}, \mu_t^\mu \right) Y_t
\right)^\top
Y_t
}^2
\le
\abs{Y_t}^2
\norm{
\partial_x \sigma\left( X_t^{x,\mu}, \mu_t^\mu \right) Y_t
}^2,
\end{equation*}
we obtain
\begin{align*}
\frac{d}{dt}
\E\left( \abs{Y_t}^p \right)
&\le
p \E\Bigg(
\abs{Y_t}^{p-2}
\left\langle Y_t,
\partial_x b\left( X_t^{x,\mu}, \mu_t^\mu \right) Y_t
\right\rangle
\Bigg)
+
\frac{p\left( p-1 \right)}{2}
\E\Bigg(
\abs{Y_t}^{p-2}
\norm{
\partial_x \sigma\left( X_t^{x,\mu}, \mu_t^\mu \right) Y_t
}^2
\Bigg).
\end{align*}
Applying Lemma~\ref{lem:spatial-coercivity} with this value of $ p $ yields
\begin{equation*}
\frac{d}{dt}
\E\left( \abs{Y_t}^p \right)
\le
-\frac{p\kappa_p}{2}
\E\left( \abs{Y_t}^p \right).
\end{equation*}
Since $ Y_0 = h $, Gronwall's lemma coupled with the fact that $ \omega \le \kappa_p / 2$ by \eqref{eq:def-omega-uniform-time} gives
\begin{equation*}
\E\left(
\abs{\partial_x X_t^{x,\mu}\left[ h \right]}^p
\right)
\le
e^{-\frac{p\kappa_p}{2} t} \abs{h}^p \le
e^{-p\omega t} \abs{h}^p.
\end{equation*}
We now turn to the measure derivative. Fix $ v \in \R^d $ and set $Y_t
= \partial_\mu X_t^{x,\mu}\left( v \right)$. Equation \eqref{eq:first-variation-mu} can be rewritten as
\begin{equation*}
dY_t
=
\left(
A_t Y_t + G_t\left( v \right)
\right) dt
+
\left(
C_t Y_t + H_t\left( v \right)
\right) dB_t,
\end{equation*}
where $A_t = \partial_x b\left( X_t^{x,\mu}, \mu_t^\mu \right)$, $C_t
= \partial_x \sigma\left( X_t^{x,\mu}, \mu_t^\mu \right)$,
\begin{align*}
G_t\left( v \right)
&=
\widetilde{\E}\left(
\partial_\mu b\left( X_t^{x,\mu}, \mu_t^\mu \right)
\left( \widetilde{X}_t^{v,\mu} \right)
\partial_x \widetilde{X}_t^{v,\mu}
\right)
+
\widetilde{\E}\left(
\partial_\mu b\left( X_t^{x,\mu}, \mu_t^\mu \right)
\left( \widetilde{X}_t^{\widetilde{\xi},\mu} \right)
\partial_\mu \widetilde{X}_t^{\widetilde{\xi},\mu}\left( v \right)
\right),
\end{align*}
and
\begin{align*}
H_t\left( v \right)
&=
\widetilde{\E}\left(
\partial_\mu \sigma\left( X_t^{x,\mu}, \mu_t^\mu \right)
\left( \widetilde{X}_t^{v,\mu} \right)
\partial_x \widetilde{X}_t^{v,\mu}
\right)
+
\widetilde{\E}\left(
\partial_\mu \sigma\left( X_t^{x,\mu}, \mu_t^\mu \right)
\left( \widetilde{X}_t^{\widetilde{\xi},\mu} \right)
\partial_\mu \widetilde{X}_t^{\widetilde{\xi},\mu}\left( v \right)
\right).
\end{align*}
Fix $ p \in \mathcal P $ and define
\begin{equation*}
m_p\left( t,v \right)
=
\sup_{x,\mu}
\E\left(
\abs{\partial_\mu X_t^{x,\mu}\left( v \right)}^p
\right).
\end{equation*}
By \eqref{eq:small-measure-dependence}, Jensen's inequality, and
\eqref{eq:first-variation-x-lp}, there exists a constant $ C_{p,d} > 0 $ such that
\begin{align}
\label{eq:G-H-bound-general}
\E\left( \abs{G_t\left( v \right)}^p \right)
+
\E\left( \norm{H_t\left( v \right)}^p \right)
\le
2^{p-1} \gamma^p
\left(
C_{p,d} e^{-\frac{p\kappa_p}{2} t}
+
m_p\left( t,v \right)
\right).
\end{align}

Applying It\^o's formula to $ \abs{Y_t}^p $ and using again
\begin{equation*}
\abs{
\left( \left( C_t Y_t + H_t\left( v \right) \right)^\top Y_t \right)
}^2
\le
\abs{Y_t}^2
\norm{C_t Y_t + H_t\left( v \right)}^2,
\end{equation*}
we obtain
\begin{align*}
\frac{d}{dt}
\E\left( \abs{Y_t}^p \right)
&\le
p \E\left(
\abs{Y_t}^{p-2}
\left\langle Y_t, A_t Y_t \right\rangle
\right)
+
\frac{p\left( p-1 \right)}{2}
\E\left(
\abs{Y_t}^{p-2}
\norm{C_t Y_t}^2
\right)
+
p \E\left(
\abs{Y_t}^{p-1} \abs{G_t\left( v \right)}
  \right)
  \\
&\quad
+
p\left( p-1 \right)
\E\left(
\abs{Y_t}^{p-1} \norm{C_t}_{\mathrm{op}} \norm{H_t\left( v \right)}
\right)
+
\frac{p\left( p-1 \right)}{2}
\E\left(
\abs{Y_t}^{p-2} \norm{H_t\left( v \right)}^2
\right).
\end{align*}
Since $ \norm{C_t}_{\mathrm{op}} \le M_\sigma $ and by Lemma~\ref{lem:spatial-coercivity},
\begin{align}
\label{eq:mp-before-young}
\frac{d}{dt}
\E\left( \abs{Y_t}^p \right)
&\le
-\frac{p\kappa_p}{2}
\E\left( \abs{Y_t}^p \right)
+
p \E\left(
\abs{Y_t}^{p-1} \abs{G_t\left( v \right)}
\right)
\nonumber\\
&\quad
+
p\left( p-1 \right) M_\sigma
\E\left(
\abs{Y_t}^{p-1} \norm{H_t\left( v \right)}
\right)
+
\frac{p\left( p-1 \right)}{2}
\E\left(
\abs{Y_t}^{p-2} \norm{H_t\left( v \right)}^2
\right).
\end{align}

We now estimate the three terms with the optimal Young parameters. First, using Young's inequality with parameter
$2^{\frac{p-1}{p}} \gamma $, we get
\begin{align*}
p \abs{Y_t}^{p-1} \abs{G_t\left( v \right)}
&\le
\left( p-1 \right) 2^{\frac{p-1}{p}} \gamma \abs{Y_t}^p
+
\frac{1}{2^{\frac{\left( p-1 \right)^2}{p}} \gamma^{p-1}}
\abs{G_t\left( v \right)}^p.
\end{align*}
Combining this with \eqref{eq:G-H-bound-general} yields
\begin{align}
\label{eq:source-estimate-G}
p \E\left(
\abs{Y_t}^{p-1} \abs{G_t\left( v \right)}
\right)
\le
p 2^{\frac{p-1}{p}} \gamma
\E\left( \abs{Y_t}^p \right)
+
C_p \gamma e^{-\frac{p\kappa_p}{2} t}
+
C_p \gamma m_p\left( t,v \right).
\end{align}

Second, using Young's inequality with the same parameter, we obtain
\begin{align*}
p\left( p-1 \right) M_\sigma \abs{Y_t}^{p-1} \norm{H_t\left( v \right)}
&\le
\left( p-1 \right)^2 2^{\frac{p-1}{p}} M_\sigma \gamma \abs{Y_t}^p
+
\frac{\left( p-1 \right) M_\sigma}{2^{\frac{\left( p-1 \right)^2}{p}} \gamma^{p-1}}
\norm{H_t\left( v \right)}^p.
\end{align*}
Using \eqref{eq:G-H-bound-general}, we can write
\begin{align}
\label{eq:source-estimate-H1}
p\left( p-1 \right) M_\sigma
\E\left(
\abs{Y_t}^{p-1} \norm{H_t\left( v \right)}
\right)
&\le
p\left( p-1 \right) 2^{\frac{p-1}{p}} M_\sigma \gamma
  \E\left( \abs{Y_t}^p \right)\nonumber \\
  &\quad
+
C_p M_\sigma \gamma e^{-\frac{p\kappa_p}{2} t}
+
C_p M_\sigma \gamma m_p\left( t,v \right).
\end{align}
Finally, using Young's inequality in the form
\begin{equation*}
a^{p-2} b^2
\le
\frac{p-2}{p} \delta a^p
+
\frac{2}{p} \delta^{-\frac{p-2}{2}} b^p,
\end{equation*}
with parameter
$ \delta = 2^{\frac{2\left( p-1 \right)}{p}} \gamma^2$, we get
\begin{align*}
\frac{p\left( p-1 \right)}{2}
\abs{Y_t}^{p-2} \norm{H_t\left( v \right)}^2
&\le
\frac{\left( p-1 \right)\left( p-2 \right)}{2}
2^{\frac{2\left( p-1 \right)}{p}} \gamma^2 \abs{Y_t}^p
+
\frac{p-1}{2^{\frac{\left( p-2 \right)\left( p-1 \right)}{p}} \gamma^{p-2}}
\norm{H_t\left( v \right)}^p.
\end{align*}
Using \eqref{eq:G-H-bound-general}, we obtain
\begin{align}
\label{eq:source-estimate-H2}
\frac{p\left( p-1 \right)}{2}
\E\left(
\abs{Y_t}^{p-2} \norm{H_t\left( v \right)}^2
\right)
\le
\frac{p\left( p-1 \right)}{2}
2^{\frac{2\left( p-1 \right)}{p}} \gamma^2
\E\left( \abs{Y_t}^p \right)
+
C_p \gamma^2 e^{-\frac{p\kappa_p}{2} t}
+
C_p \gamma^2 m_p\left( t,v \right).
\end{align}

Substituting \eqref{eq:source-estimate-G}, \eqref{eq:source-estimate-H1}, and
\eqref{eq:source-estimate-H2} into \eqref{eq:mp-before-young}, and then taking the
supremum over $ x \in \R^d $, we obtain
\begin{align*}
\frac{d}{dt} m_p\left( t,v \right)
&\le
-
\Bigg(
\frac{p\kappa_p}{2}
-
p 2^{\frac{p-1}{p}} \gamma
-
p\left( p-1 \right) 2^{\frac{p-1}{p}} M_\sigma \gamma
-
\frac{p\left( p-1 \right)}{2} 2^{\frac{2\left( p-1 \right)}{p}} \gamma^2
\Bigg)
m_p\left( t,v \right)
+
C_p e^{-\frac{p\kappa_p}{2} t}.
\end{align*}
By \eqref{eq:def-omega-uniform-time},
\begin{equation*}
\frac{p\kappa_p}{2}
-
p 2^{\frac{p-1}{p}} \gamma
-
p\left( p-1 \right) 2^{\frac{p-1}{p}} M_\sigma \gamma
-
\frac{p\left( p-1 \right)}{2} 2^{\frac{2\left( p-1 \right)}{p}} \gamma^2
\ge
p\omega.
\end{equation*}
Hence,
\begin{equation*}
\frac{d}{dt} m_p\left( t,v \right)
\le
-p\omega  m_p\left( t,v \right)
+
C_p e^{-\frac{p\kappa_p}{2} t}.
\end{equation*}
Since $ \omega \le \kappa_p / 2 $, the forcing term is bounded by $ C_p e^{-p\omega t} $. Moreover,
$ m_p\left( 0,v \right) = 0 $. Gronwall's lemma therefore yields, for
all $t \ge 0$,
\begin{equation*}
m_p\left( t,v \right)
\le
C_p e^{-p\omega t},
\end{equation*}
which concludes the proof.
\end{proof}

\begin{lemma}[Decay of the higher mixed derivatives of the decoupled flow]
\label{lem:uniform-time-second-variation}
Assume Assumption~\ref{ass:coeff-level-new}, and let $ \omega $ be the constant
defined in \eqref{eq:def-omega-uniform-time}. Then, for all multi-indices
$ \gamma $, $ \beta $ and every integer $ n \ge 0 $ satisfying $2 \le
\abs{\gamma} + \abs{\beta} + n \le 7$, and every $p \in \left[
  2,\frac{14}{\abs{\gamma} + \abs{\beta} + n} \right]$, there exists a constant $ C_{p,\gamma,\beta,n} > 0 $ such that
\begin{equation}
\label{eq:uniform-time-higher-flow-decay-p}
\sup_{x,\mu,v_1,\dots,v_n}
\E\left(
\abs{
\partial_x^\gamma \partial_v^\beta \partial_\mu^n
X_t^{x,\mu}\left( v_1,\dots,v_n \right)
}^p
\right)
\le
C_{p,\gamma,\beta,n} e^{-p\omega t},
\qquad t \ge 0.
\end{equation}
\end{lemma}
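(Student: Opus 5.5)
We will argue by induction on the total order $k=\abs{\gamma}+\abs{\beta}+n\in\{2,\dots,7\}$, with Lemma~\ref{lem:first-derivatives-decay} as the base case $k=1$. We also need the $\partial_v$-derivatives of first-order objects, which are absorbed by the same structure.

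Fix a mixed derivative $Y_t=\partial_x^\gamma\partial_v^\beta\partial_\mu^n X_t^{x,\mu}(v_1,\dots,v_n)$ of order $k$. Differentiating the decoupled equation \eqref{eq:def-decoupl-mcvlasov-flow}, and using the lift representation \eqref{eq:fixed-time-flow-lift-derivative-abs-assump} for the Lions directions, shows that $Y$ solves a linear SDE of the same type as \eqref{eq:first-variation-mu}:
\begin{equation*}
dY_t=\left(A_tY_t+\mathcal K^b_t[Y]+G^{(k)}_t\right)dt+\left(C_tY_t+\mathcal K^\sigma_t[Y]+H^{(k)}_t\right)dB_t .
\end{equation*}
Here $A_t,C_t$ are as before. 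The terms $\mathcal K^{b}_t[Y],\mathcal K^\sigma_t[Y]$ are the nonlocal terms $\widetilde\E\big(\partial_\mu b(\cdot)(\widetilde X)\,\widetilde Y\big)$ (resp.\ with $\sigma$), which contain the same-order derivative of an independent copy and carry the factor $\gamma$ from \eqref{eq:small-measure-dependence}. The source terms $G^{(k)},H^{(k)}$ collect everything else. By Fa\`a di Bruno, these are finite sums of bounded derivatives of $b,\sigma$ of order $\ge 2$ (bounded since $b,\sigma\in C^{7,7}_{b,\mathrm{Lip}}$), multiplied by products $\prod_{j=1}^r Z^{(j)}_t$ with $r\ge 2$. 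Each $Z^{(j)}$ is a flow derivative (of $X$ or of an independent copy $\widetilde X$, possibly started at a Lions point) of order $k_j\ge1$ with $\sum_j k_j\le k$ and $k_j<k$. There are also terms such as $\partial_\mu b(\cdot)(\widetilde X)\cdot\partial_v\widetilde X^{v,\mu}$, coming from the $\partial_v$-derivatives of the Lions-point dependence. These again have $r\ge2$ factors of lower order when differentiated further, or the factor $\gamma$ together with a lower-order derivative.

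The moment estimate then repeats the proof of Lemma~\ref{lem:first-derivatives-decay}(ii). Apply It\^o's formula to $\abs{Y_t}^p$, use Lemma~\ref{lem:spatial-coercivity} for the $A,C$ part, and handle the $\mathcal K$ terms with Young's inequality exactly as there. These terms produce the $\gamma$-dependent losses built into $\omega$ in \eqref{eq:def-omega-uniform-time}, with $m_p(t)=\sup\E\abs{Y_t}^p$ appearing on the right through Jensen. For the source terms, Young gives contributions $\epsilon\,\E\abs{Y_t}^p+C_\epsilon\E\abs{G^{(k)}_t}^p$. The slack $\epsilon$ is affordable because $\omega$ is a strict minimum and we may use $\kappa_p$ in place of the $\omega$ bound, or lose an arbitrarily small rate. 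Next we bound $\E\abs{G^{(k)}_t}^p$ by H\"older: with exponents $q_j=pk/k_j$ (so that $\sum_j 1/q_j\le 1$),
\begin{equation*}
\E\prod_j\abs{Z^{(j)}_t}^p\le\prod_j\left(\E\abs{Z^{(j)}_t}^{pk/k_j}\right)^{k_j/k}.
\end{equation*}
The constraint $p\le 14/k$ gives $pk/k_j\le 14/k_j$, which is exactly the range where the induction hypothesis (or Lemma~\ref{lem:first-derivatives-decay} with $p\in\mathcal P$, interpolated to non-even exponents by Jensen) applies. Each factor then decays like $e^{-\omega t\cdot pk/k_j\cdot k_j/k}=e^{-p\omega t}$, and since $r\ge2$ the product decays like $e^{-rp\omega t}\le e^{-p\omega t}$, which is even better. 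This yields $\frac{d}{dt}m_p\le -p\omega' m_p+C e^{-2p\omega t}$ with $\omega'$ close to $\omega$. The initial value is $m_p(0)=0$ for $k\ge2$, except for pure $\partial_x$ derivatives, where it is also $0$ since $X_0=x$ is linear. Gronwall then gives \eqref{eq:uniform-time-higher-flow-decay-p}. It is enough to do this for even $p$, or for $p=14/k$ and then descend by Jensen.

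The main obstacle is the rate bookkeeping. The Young absorption of the source terms costs a small amount of rate. To keep exactly $\omega$ rather than $\omega-\epsilon$, I would first try to exploit the superlinear decay $e^{-2p\omega t}$ of the products. Concretely, I would use Young with weight $e^{\cdot}$ in the form $p\abs{Y}^{p-1}\abs{G}\le \delta(t)\abs{Y}^p+C\delta(t)^{1-p}\abs{G}^p$ with $\delta(t)=e^{-\omega t}$, whose integral is finite. The price is only a bounded multiplicative constant in Gronwall, and the leftover source is still $O(e^{-p\omega t})$, which keeps the exact rate $\omega$. A secondary difficulty is the case where the induction requires $p$ outside $\mathcal P$ for a lower-order factor of order $1$ (needing $p$ up to $14$). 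This is exactly why $\mathcal P$ extends to $14$. The nonlocal terms evaluated at the random point $\widetilde\xi$ are controlled uniformly thanks to the $\sup_{x,\mu,v}$ in the induction hypothesis.
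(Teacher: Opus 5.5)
Your overall route is the paper's. Both proofs induct on the total order, write a linear equation whose homogeneous part is $\partial_x b,\partial_x\sigma$ plus a $\gamma$-small same-order coupling, apply H\"older with exponents $pq/q_j$ (which is exactly where $p\le 14/q$ comes from), and close with It\^o--Young--Gronwall from zero initial data. Your Jensen step for non-even exponents is a useful precision. The Young-slack worry is unnecessary: lump the sources with the coupling term and use the Young parameter $2^{(p-1)/p}\gamma$ of Lemma~\ref{lem:first-derivatives-decay}. The coefficient of $m_p$ is then unchanged and the source only acquires a factor $\gamma^{1-p}$. Your weighted Young also works, since the leftover is $e^{-(p+1)\omega t}$.

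The gap is in the terms you call ``the factor $\gamma$ together with a lower-order derivative'': they are of the \emph{same} order as $Y$.
\begin{itemize}
\item For $Y=\partial_v\partial_\mu X_t^{x,\mu}(v)$, differentiate the source $\widetilde{\E}\big(\partial_\mu b(X_t^{x,\mu},\mu_t^\mu)(\widetilde X_t^{v,\mu})\,\partial_x\widetilde X_t^{v,\mu}\big)$ of \eqref{eq:first-variation-mu} in $v$. This produces $\widetilde{\E}\big(\partial_\mu b(\cdot)(\widetilde X_t^{v,\mu})\,\partial_x^2\widetilde X_t^{v,\mu}\big)$.
\item For $Y=\partial_\mu^2X_t^{x,\mu}(v_1,v_2)$ one gets $\partial_\mu b\cdot\partial_x\partial_\mu\widetilde X_t^{v_1,\mu}(v_2)$, plus its counterpart $\partial_\mu b\cdot\partial_x\partial_\mu\widetilde X_t^{v_2,\mu}(v_1)$ from differentiating the outer law of $\widetilde\xi$.
\end{itemize}
These are single factors of order $q$, but they are a different derivative from $Y$. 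So none of the induction hypothesis, the $e^{-2p\omega t}$ product bound, or $\mathcal K[Y]$ covers them. Even an $e^{-p\omega t}$ bound on them would only give $t\,e^{-p\omega t}$ after Gronwall. Hence $\frac{d}{dt}m_p\le -p\omega' m_p+Ce^{-2p\omega t}$ is not justified for derivatives with Lions components. The paper's written proof asserts that every source is a product of at least two strictly lower-order factors, so it passes over the same point.

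The repair is an inner ordering within each order $q$.
\begin{enumerate}
\item First treat derivatives containing at least one $\partial_x$. Since $x$ enters only through $X^{x,\mu}$, and never through the copies or $\mu_t^\mu$, their equations have no same-order coupling. Their sources are only products of at least two lower-order factors, decaying like $e^{-2p\omega t}$. Their homogeneous part gives the rate $\kappa_p/2$ of Lemma~\ref{lem:first-derivatives-decay}(i), and the smallness constant being positive makes $\omega<\kappa_p/2$ strict. So these derivatives decay at a rate strictly larger than $\omega$.
\item Then treat pure Lions/$\partial_v$ derivatives. The offending terms always carry at least one $\partial_x$ on the copy, so by step 1 they are forcings decaying strictly faster than $e^{-p\omega t}$. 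They can be lumped with the coupling exactly as $\partial_x\widetilde X^{v,\mu}$ is in Lemma~\ref{lem:first-derivatives-decay}, and Gronwall then gives the exact rate without a factor $t$.
\end{enumerate}
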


\begin{proof}
We argue by induction on the total order $q = \abs{\gamma} +
\abs{\beta} + n \in \left\{2,\dots,7\right\}$. For a fixed $ q $, we
prove \eqref{eq:uniform-time-higher-flow-decay-p} for every $p \in
\left[ 2,\frac{14}{q} \right]$. We start with the case $q=2$. Fix a
mixed derivative of total order $ 2 $ and write $Y_t
=
\partial_x^\gamma \partial_v^\beta \partial_\mu^n
X_t^{x,\mu}\left( v_1,\dots,v_n \right)$, $\abs{\gamma} + \abs{\beta}
+ n = 2$. By Proposition~\ref{prop:fixed-time-flow-u}, this derivative
exists. By the recursive construction in \cite[Theorem~3.2 and Appendix~6.2]{CrisanMcMurray2018},
the process $ Y_t $ satisfies a linear equation of the same form as the first-order
measure derivative equation: the coefficient multiplying $ Y_t $ is still given by
$ \partial_x b\left( X_t^{x,\mu},\mu_t^\mu \right) $ in the drift and
$ \partial_x \sigma\left( X_t^{x,\mu},\mu_t^\mu \right) $ in the diffusion, while the
same-order Lions coupling is carried by the terms involving $ \partial_\mu b $ and
$ \partial_\mu \sigma $, and the inhomogeneous terms are finite sums of bounded
second-order derivatives of $ b $ and $ \sigma $, multiplied by products of two first-order
derivatives, possibly inside tilded expectations.
\\~\\
We first estimate these source terms. Let $ T_t $ be one such term. If $ T_t $ is not
inside a tilded expectation, then $T_t=\Lambda_t Z_t^{(1)} Z_t^{(2)}$, where $ \Lambda_t $ is bounded and $Z_t^{(1)}$, $Z_t^{(2)}$ are first-order derivatives.
Hence, for every $ p \in \left[ 2,7 \right] $,
\begin{align*}
\E\left( \abs{T_t}^p \right)
&\le
C \E\left(
\abs{Z_t^{(1)}}^p \abs{Z_t^{(2)}}^p
\right)
\le
C
\E\left(
\abs{Z_t^{(1)}}^{2p}
\right)^{1/2}
\E\left(
\abs{Z_t^{(2)}}^{2p}
\right)^{1/2}
\le
C e^{-2p\omega t},
\end{align*}
because $ 2p \le 14 $ and Lemma~\ref{lem:first-derivatives-decay} gives the required
bound for first-order derivatives. If $T_t$ is inside a tilded expectation, Jensen's
inequality gives
\begin{align*}
\E\left(
\abs{
\widetilde{\E}\left( T_t \right)
}^p
\right)
&\le
\E\left(
\widetilde{\E}\left( \abs{T_t}^p \right)
\right)
=
\E\left( \abs{T_t}^p \right)
\le
C e^{-2p\omega t}.
\end{align*}
Since there are only finitely many such terms, the total source terms
$ R_t $ and $ S_t $ satisfy
\begin{equation}
\label{eq:uniform-time-higher-source-order2}
\sup \E\left( \abs{R_t}^p \right)
+
\sup \E\left( \norm{S_t}^p \right)
\le
C_p e^{-2p\omega t}
\end{equation}
for any $p \in \left[ 2,7 \right]$. We now apply the same It\^o--Young argument as in the proof of
Lemma~\ref{lem:first-derivatives-decay}: since the homogeneous coefficients are
the same as in the first-order equation, the same coercive estimate yields
\begin{equation*}
\frac{d}{dt}
\E\left( \abs{Y_t}^p \right)
\le
-p\omega \E\left( \abs{Y_t}^p \right)
+
C_p
\left(
\E\left( \abs{R_t}^p \right)
+
\E\left( \norm{S_t}^p \right)
\right).
\end{equation*}
Using \eqref{eq:uniform-time-higher-source-order2}, we obtain
\begin{equation*}
\frac{d}{dt}
\E\left( \abs{Y_t}^p \right)
\le
-p\omega \E\left( \abs{Y_t}^p \right)
+
C_p e^{-2p\omega t}.
\end{equation*}
Since $ Y_0 = 0 $, Gronwall's lemma yields, for all $p \in \left[ 2,7 \right]$,
\begin{equation*}
\E\left( \abs{Y_t}^p \right)
\le
C_p e^{-p\omega t},
\end{equation*}
which proves \eqref{eq:uniform-time-higher-flow-decay-p} for all second-order
derivatives.
\\~\\
We now proceed to the induction on the total order. Fix $q \in \left\{3,\dots,7\right\}$ and assume that
\eqref{eq:uniform-time-higher-flow-decay-p} holds for every mixed derivative of total
order between $ 2 $ and $ q-1 $, and every $p \in \left[
  2,\frac{14}{r} \right]$ when the total order is $r$. Let $Y_t
=
\partial_x^\gamma \partial_v^\beta \partial_\mu^n
X_t^{x,\mu}\left( v_1,\dots,v_n \right)$, $\abs{\gamma} + \abs{\beta}
+ n = q$. By the recursive construction in \cite[Theorem~3.2 and Appendix~6.2]{CrisanMcMurray2018},
the process $ Y_t $ again satisfies a linear equation of the same form as above: the
homogeneous coefficients are still the first state derivatives of the coefficients, while
the inhomogeneous terms are finite sums of bounded derivatives of $ b $ and $ \sigma $,
possibly inside tilded expectations, multiplied by products of strictly lower-order
derivatives of the flow. More precisely, every source term is of the form
\begin{equation*}
T_t
=
\Lambda_t \prod_{j=1}^m Z_t^{(j)}
\quad\mbox{or}\quad
\widetilde{\E}\left(
\widetilde \Lambda_t \prod_{j=1}^m \widetilde Z_t^{(j)}
\right),
\end{equation*}
where $\Lambda_t$ and $\widetilde \Lambda_t$ are bounded
derivatives of $b$ or $\sigma$, each $Z_t^{(j)}$ is a mixed derivative of the flow of total order
$q_j \in \left\{1,\dots,q-1\right\}$, one has $q_1 + \cdots + q_m =
q$ and $m \ge 2$. Now, fix $p \in \left[ 2,\frac{14}{q} \right]$. For
each $j$, define $r_j = \frac{pq}{q_j}$. Then $ r_j \ge 2 $. Moreover,
if $ q_j = 1 $, then $r_j = pq \le 14$, while if $ q_j \ge 2 $, then $r_j
=
\frac{pq}{q_j}
\le
\frac{14}{q_j}$. Therefore, if $q_j = 1$, Lemma~\ref{lem:first-derivatives-decay} applies and gives
\begin{equation*}
\sup \E\left( \abs{Z_t^{(j)}}^{r_j} \right)
\le
C_{r_j} e^{-r_j\omega t},
\end{equation*}
and if $ q_j \ge 2 $, the induction hypothesis applies and gives
\begin{equation*}
\sup \E\left( \abs{Z_t^{(j)}}^{r_j} \right)
\le
C_{r_j} e^{-r_j\omega t}.
\end{equation*}
Now, Hölder's inequality with exponents
$\frac{q}{q_1},\dots,\frac{q}{q_m}$ gives
\begin{align*}
\E\left( \abs{T_t}^p \right)
&\le
C
\E\left(
\prod_{j=1}^m \abs{Z_t^{(j)}}^p
\right)
\le
C
\prod_{j=1}^m
\E\left(
\abs{Z_t^{(j)}}^{r_j}
\right)^{q_j/q}
\le
C
\prod_{j=1}^m
e^{-r_j\omega t   q_j/q}
=
C e^{-pm\omega t}
\le
C e^{-2p\omega t},
\end{align*}
because $m \ge 2$. If the term is inside a tilded expectation, Jensen's inequality gives
the same bound, namely
\begin{equation*}
\E\left(
\abs{
\widetilde{\E}\left(
\widetilde \Lambda_t \prod_{j=1}^m \widetilde Z_t^{(j)}
\right)
}^p
\right)
\le
C e^{-2p\omega t}.
\end{equation*}
Since the family of source terms is finite, we conclude that for every $p \in \left[ 2,\frac{14}{q} \right]$,
\begin{equation*}
\sup \E\left( \abs{R_t}^p \right)
+
\sup \E\left( \norm{S_t}^p \right)
\le
C_p e^{-2p\omega t}.
\end{equation*}
Applying again the same It\^o--Young estimate as in the proof of
Lemma~\ref{lem:first-derivatives-decay} gives
\begin{equation*}
\frac{d}{dt}
\E\left( \abs{Y_t}^p \right)
\le
-p\omega \E\left( \abs{Y_t}^p \right)
+
C_p e^{-2p\omega t}.
\end{equation*}
Since $ Y_0 = 0 $, Gronwall's lemma yields
\begin{equation*}
\E\left( \abs{Y_t}^p \right)
\le
C_p e^{-p\omega t}
\end{equation*}
for every $p \in \left[ 2,\frac{14}{q} \right]$, which closes the
induction and concludes the proof.
\end{proof}

\subsection{Uniform-in-time weak expansion for specific functionals of the empirical measure}\label{S:UniformTimeWeakExpansionFunct}

Let $\Phi_1\left( \mu \right) = \left\langle \mu,\varphi
\right\rangle$ and $\Phi_2\left( \mu \right) = \left\langle
  \mu,\varphi \right\rangle^2$. The goal of this section is to prove Proposition \ref{prop:uniform-time-weak-expansion-abs-assump} on the uniform-in-time weak expansion for $ \Phi_1 $ and $ \Phi_2 $. We present this proof at the end of this section, as we first need some preliminary results which we present next.

  For every bounded measurable functional $ \Phi \colon \Ptwo \to \R $, we have defined the propagated
functional $P_t \Phi\left( \mu \right) = \Phi\left( \mu_t^\mu \right)$
for all $t \ge 0$ and $\mu \in \Ptwo$. We hence have
\begin{equation*}
P_t\Phi_1\left( \mu \right) = \left\langle \mu_t^\mu,\varphi
\right\rangle\quad \mbox{and}\quad P_t\Phi_2\left( \mu \right) = \left\langle \mu_t^\mu,\varphi
\right\rangle^2.
\end{equation*}
Furthermore, for any $\Psi\in M^5\big(\Ptwo\big)$, we recall the definition
\begin{equation*}
\Gamma\Psi\left(\mu\right)
=
\frac12
\int_{\R^d}
\operatorname{Tr}
\left(
a\left(v,\mu\right)\partial_{\mu\mu}^2\Psi\left(\mu\right)\left(v,v\right)
\right)
\mu\left(dv\right).
\end{equation*}

\begin{proposition}[Long-time bounds for the propagated test functionals]
\label{prop:uniform-time-semigroup-abs-assump}
Assume Assumption~\ref{ass:coeff-level-new}, and let $ \varphi \in C_b^7\left( \R^d \right) $.
Then, there exists a constant $C > 0$ such that the following hold.
\begin{enumerate}
\item[(i)] For $\ell \in \left\{ 1,2 \right\}$, $ \sup_{t \ge 0}
\norm{P_t\Phi_{\ell}}_{M^5\big( \Ptwo \big)}
\le C$.

\item[(ii)] For $\ell \in \left\{ 1,2 \right\}$ and every $t \ge 0$,
  $\sup_{x,y \in \R^d,\, \mu \in \Ptwo}
\abs{\partial_{\mu\mu}^2 P_t\Phi_{\ell}\left( \mu \right)\left( x,y \right)}
\le C e^{-\omega t}$.

\item[(iii)] For $\ell \in \left\{ 1,2 \right\}$ and every $t \ge 0$,
  $ \norm{\Gamma P_t \Phi_\ell}_{M^5\big( \Ptwo \big)}
\le C e^{-\omega t}$.
\end{enumerate}
\end{proposition}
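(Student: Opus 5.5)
The plan is to express everything through $u_t(x,\mu)=\E\left(\varphi\left(X_t^{x,\mu}\right)\right)$, since
\begin{equation*}
P_t\Phi_1(\mu)=\left\langle\mu_t^\mu,\varphi\right\rangle=\int_{\R^d}u_t(x,\mu)\,\mu(dx),
\qquad
P_t\Phi_2(\mu)=\left(P_t\Phi_1(\mu)\right)^2 .
\end{equation*}
Write $U_t(\mu)=\int u_t(x,\mu)\mu(dx)$. The standard rule for Lions derivatives of integral functionals gives
\begin{equation*}
\partial_\mu U_t(\mu)(v)=\partial_x u_t(v,\mu)+\int\partial_\mu u_t(x,\mu)(v)\,\mu(dx),
\end{equation*}
\begin{equation*}
\partial^2_{\mu\mu}U_t(\mu)(v,w)=\partial_\mu\partial_xu_t(v,\mu)(w)+\partial_v\partial_\mu u_t(w,\mu)(v)+\int\partial^2_{\mu\mu}u_t(x,\mu)(v,w)\,\mu(dx),
\end{equation*}
up to the ordering conventions. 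Higher derivatives $D^{(n,\beta)}U_t$ are finite sums of the same type: each term is either a mixed derivative of $u_t$ evaluated at one of the Lions variables, or a $\mu$-integral of such a derivative.

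By Proposition~\ref{prop:fixed-time-flow-u-abs-assump}, every mixed derivative of $u_t$ of positive order at most $7$ is a finite sum of terms of the form \eqref{eq:fixed-time-ut-structure-abs-assump}. We bound each term by $\norm{\nabla^r\varphi}_\infty\prod_j\E\left(\abs{Z^{(j)}}^{r}\right)^{1/r}$ using H\"older with equal exponents $r\le 7$. Choosing the H\"older exponents as in the proof of Lemma~\ref{lem:uniform-time-second-variation} ($r_j=pq/q_j$) keeps every exponent inside the admissible range $[2,14/q_j]$. Lemma~\ref{lem:first-derivatives-decay} and Lemma~\ref{lem:uniform-time-second-variation} then give
\begin{equation*}
\abs{\partial_x^\gamma\partial_v^\beta\partial_\mu^n u_t}\le C e^{-r\omega t}\le Ce^{-\omega t},
\end{equation*}
uniformly in the arguments, since $r\ge1$. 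The same bound holds for $D^{(n,\beta)}U_t$ for all orders $1\le n+|\beta|\le 6$, while $\abs{U_t}\le\norm{\varphi}_\infty$. Every term of order at least $2$ contains either $r\ge2$ flow factors or a higher-order flow derivative, so in fact it decays.

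For (i), boundedness of the derivatives of order at most $5$ follows from the above. Global Lipschitz continuity of these derivatives comes from boundedness of the derivatives of order $6$ in the $x$- and $v$-variables, together with the measure Lipschitz bound $W_2$, which is obtained from the bound on the next Lions derivative (via $W_1\le W_2$ and the mean value formula along couplings). This is where the order $7$ regularity budget is consumed. Passing from $P_t\Phi_1$ to $P_t\Phi_2=U_t^2$ uses the Leibniz rule, for instance
\begin{equation*}
\partial^2_{\mu\mu}U_t^2=2\,\partial_\mu U_t\otimes\partial_\mu U_t+2U_t\,\partial^2_{\mu\mu}U_t ,
\end{equation*}
and products of uniformly bounded factors remain uniformly bounded.

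For (ii), every term of $\partial^2_{\mu\mu}U_t$ is a derivative of $u_t$ of order at least $1$, hence is $O(e^{-\omega t})$. For $U_t^2$, the first term of the Leibniz formula is $O(e^{-2\omega t})$ and the second is $O(e^{-\omega t})$.

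For (iii), $\Gamma P_t\Phi_\ell$ integrates $\operatorname{Tr}(a\,\partial^2_{\mu\mu}P_t\Phi_\ell(v,v))$ against $\mu(dv)$. Its $M^5$ derivatives involve derivatives of $a=\sigma\sigma^\top$, which are bounded since $\sigma$ is bounded and of class $C^{7,7}_{b,\mathrm{Lip}}$, multiplied by derivatives of $\partial^2_{\mu\mu}P_t\Phi_\ell$ of order up to $7$. All of these are of the form above and carry at least the factor $e^{-\omega t}$. The Lipschitz constants are handled as in (i).

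The main obstacle is the bookkeeping of derivative orders, and here I am unsure whether the budget closes. The $M^5$ norm of $\Gamma P_t\Phi$ requires Lipschitz control of derivatives of order $2+5$ of $P_t\Phi$, which sits at the edge of the $7$ available flow derivatives. Lipschitz continuity in $\mu$ of the top-order terms would then require moment bounds of order $8$ that the lemmas do not provide. If this happens, I would bound the Lipschitz modulus of the order-$7$ flow derivatives directly via the continuity/Kusuoka--Stroock estimates of \cite{CrisanMcMurray2018} combined with the same It\^o--Young decay argument, or interpret the Lipschitz requirement at top order through the difference-quotient equations. The moment ranges $p\le14/q$ must also be checked: the H\"older splitting needs exponents $r_j$ with $r_jq_j\le14$, which is exactly why $\mathcal P$ goes up to $14$.
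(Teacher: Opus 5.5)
Your route is the paper's route: you use the representation $P_t\Phi_1(\mu)=\int_{\R^d}u_t(x,\mu)\,\mu(dx)$ and iterate the rule \eqref{eq:proof-prop-semigroup-basic-rule-abs-assump}. You then apply the Leibniz rule for $P_t\Phi_2=(P_t\Phi_1)^2$, and the same rule to $g(v,\mu)=\frac12\operatorname{Tr}\bigl(a(v,\mu)\,\partial_{\mu\mu}^2P_t\Phi_\ell(\mu)(v,v)\bigr)$ for $\Gamma$. Where you deviate, you are more careful than the paper.

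\begin{itemize}
\item The paper puts one flow factor in $L^2$ and absorbs the others via Assumption~\ref{ass:flow-smothness}. That assumption only gives moments at each fixed time, not uniformly in $t$.
\item Your balanced choice $r_j=2q/q_j$ (so $r_j\ge 2$ and $r_jq_j=2q\le 14$) stays inside the ranges of Lemmas~\ref{lem:first-derivatives-decay} and~\ref{lem:uniform-time-second-variation}. It gives the time-uniform bound $Ce^{-r\omega t}$.
\item Discard the ``equal exponents $r$'' version you state first. It fails, e.g., for factor orders $(4,1,1,1)$: a fourth moment of a fourth-order flow derivative lies outside $[2,14/4]$.
\item Your derivation of the Lipschitz half of the $M^5$ norm in (i), from time-uniform bounds on derivatives of order $6$, is correct. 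The paper never writes this step.
\end{itemize}

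The genuine gap is (iii), and your suspicion is justified: the derivative budget does not close. The Lipschitz half of $\norm{\Gamma P_t\Phi_\ell}_{M^5(\Ptwo)}$ needs Lipschitz control of derivatives of $P_t\Phi_\ell$, hence of $u_t$, of total order $2+5=7$. These include seventh-order derivatives of $u_t$ with six state derivatives and one Lions derivative. Such derivatives contain $\E\bigl(\nabla^7\varphi(X_t^{x,\mu})[\partial_xX_t^{x,\mu},\dots,\partial_xX_t^{x,\mu},\partial_\mu X_t^{x,\mu}(w)]\bigr)$. Making this Lipschitz requires two things you do not have:
\begin{itemize}
\item $\nabla^7\varphi$ must be Lipschitz. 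This is not implied by $\varphi\in C_b^7$, and it cannot be recovered by smoothing, since no nondegeneracy of $\sigma$ is assumed.
\item Seventh-order flow derivatives must be Lipschitz with decay, i.e.\ you need eighth derivatives. These lie outside Assumption~\ref{ass:flow-smothness} and outside the range $p\le 14/q$ of the decay lemmas.
\end{itemize}
Neither of your fallbacks addresses the $\nabla^7\varphi$ issue. So what your argument actually proves for $\Gamma P_t\Phi_\ell$ is that its derivatives of order at most $5$ are bounded by $Ce^{-\omega t}$ and Lipschitz up to order $4$. That is not the $M^5$ estimate. The paper's proof has the same hole: it lists these derivative bounds and asserts the $M^5$ estimate without ever checking Lipschitz continuity.

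To close the gap you have two options:
\begin{itemize}
\item Add one derivative throughout: $\nabla^7\varphi$ Lipschitz or $\varphi\in C_b^8$, coefficients in $C^{8,8}_{b,\mathrm{Lip}}$, $\mathcal P$ extended to $16$, and Lemma~\ref{lem:uniform-time-second-variation} extended to order $8$.
\item Weaken (iii) to $\norm{\Gamma P_t\Phi_\ell}_{M^4(\Ptwo)}\le Ce^{-\omega t}$. This follows from the existing order-$7$ bounds by exactly your argument for (i). It is presumably all that the first-order static bound applied to $\Gamma P_{t-s}\Phi_\ell$ in the proof of Proposition~\ref{prop:uniform-time-weak-expansion-abs-assump} requires.
\end{itemize}
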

\begin{proof}
We start by treating $P_t\Phi_1$.
Recall that $u_t\left( x,\mu \right) = \E\left( \varphi\left(
    X_t^{x,\mu} \right) \right)$ and observe that if $X_0^\mu$ is a
random variable with law $\mu$ and $\left( X_s^\mu \right)_{s \ge 0}$
is the corresponding McKean--Vlasov solution started from $X_0^\mu$,
we can write
\begin{align*}
P_t\Phi_1\left( \mu \right)
&=
\left\langle \mu_t^\mu, \varphi \right\rangle
=
\int_{\R^d} \varphi\left( y \right)\mu_t^\mu\left( dy \right)
=
\E\left( \varphi\left( X_t^\mu \right) \right).
\end{align*}
Using the tower property with respect to $X_0^\mu$, and the fact that conditionally on
$X_0^\mu = x$ the time-$t$ state has the same law as $X_t^{x,\mu}$, we get
\begin{align*}
\E\left( \varphi\left( X_t^\mu \right) \right)
=
\E\left( \E\left( \varphi\left( X_t^\mu \right)\mid X_0^\mu \right) \right)
=
\int_{\R^d}
\E\left( \varphi\left( X_t^{x,\mu} \right) \right)\mu\left( dx \right)
=
\int_{\R^d} u_t\left( x,\mu \right)\mu\left( dx \right).
\end{align*}
Hence, we have the representation
\begin{equation}
  \label{eq:rep-pt-with-ut-abs-assump}
P_t\Phi_1\left( \mu \right)
=
\int_{\R^d} u_t\left( x,\mu \right)\mu\left( dx \right).
\end{equation}
Now, by Proposition~\ref{prop:fixed-time-flow-u-abs-assump}, every mixed derivative of
$ u_t\left( x,\mu \right) $ of positive total order at most $ 7 $ is a
finite sum of terms of the form
\begin{equation*}
\E\left(
\nabla^r \varphi\left( X_t^{x,\mu} \right)
\left[
\partial_x^{\gamma_1} \partial_v^{\beta_1}\partial_\mu^{n_1}
X_t^{x,\mu}\left( \mathbf{v}_1 \right),\dots,
\partial_x^{\gamma_r} \partial_v^{\beta_r}\partial_\mu^{n_r}
X_t^{x,\mu}\left( \mathbf{v}_r \right)
\right]
\right),
\end{equation*}
where $ 1 \le r \le 7 $, where, for each $1 \leq j \leq r$, $\mathbf{v}_j$
denotes an $n_j$-tuple of Lions directions, and each
$\partial_x^{\gamma_j} \partial_v^{\beta_j}\partial_\mu^{n_j}
X_t^{x,\mu}\left( \mathbf{v}_j \right)$
is a mixed derivative of the flow of positive total order $1 \le \abs{\gamma_j} + \abs{\beta_j} + n_j
\le 7$. By Lemma \ref{lem:uniform-time-second-variation}, every such
mixed derivative of the flow satisfies an $ L^2 $-bound of order $ e^{-\omega t} $. More precisely, if
$ \abs{\gamma_j} + \abs{\beta_j} + n_j \ge 2 $, then
Lemma \ref{lem:uniform-time-second-variation} applied with $ p = 2 $ or Lemma \ref{lem:first-derivatives-decay} applied with $
p = 2 $ if $ \abs{\gamma_j} + \abs{\beta_j} + n_j = 1 $ give
\begin{equation*}
\E\left(
\abs{
\partial_x^{\gamma_j} \partial_v^{\beta_j}\partial_\mu^{n_j}
X_t^{x,\mu}\left( \mathbf{v}_j \right)
}^2
\right)
\le C e^{-2\omega t},
\end{equation*}
for every $ j = 1,\dots,r $.
On the other hand, the derivatives of $ \varphi $ are bounded, and by Assumption~\ref{ass:flow-smothness} all flow derivatives of total order at most $ 7 $ have finite moments of every order.
Therefore, for any term of the form
\begin{equation*}
\E\left(
\nabla^r \varphi\left( X_t^{x,\mu} \right)
\left[
\partial_x^{\gamma_1} \partial_v^{\beta_1}\partial_\mu^{n_1}
X_t^{x,\mu}\left( \mathbf{v}_1 \right),\dots,
\partial_x^{\gamma_r} \partial_v^{\beta_r}\partial_\mu^{n_r}
X_t^{x,\mu}\left( \mathbf{v}_r \right)
\right]
\right),
\end{equation*}
we can bound $\nabla^r \varphi\left( X_t^{x,\mu} \right)$ by $ \norm{\nabla^r \varphi}_{\infty} $ and then apply H\"older's inequality.
Since the total order of differentiation is positive, at least one
factor $\partial_x^{\gamma_j} \partial_v^{\beta_j}\partial_\mu^{n_j}
X_t^{x,\mu}\left( \mathbf{v}_j \right)$ is of positive order, and we
use the above decay estimate on that factor. All remaining factors are absorbed into the constant by their uniform finite-moment bounds.
Consequently, every such term is bounded by $ C e^{-\omega t} $, uniformly in
$ x $, $ \mu $, and the Lions variables.
Since every positive-order mixed derivative of $ u_t $ is a finite sum of such terms, we obtain
\begin{equation}
\label{eq:proof-prop-semigroup-ut-derivative-bound-abs-assump}
\sup_{x \in \R^d,\, \mu \in \Ptwo,\, v_1,\dots,v_n \in \R^d}
\abs{
\partial_x^\gamma \partial_v^\beta \partial_\mu^n u_t\left( x,\mu \right)\left( v_1,\dots,v_n \right)
}
\le
C e^{-\omega t},
\end{equation}
whenever $ \abs{\gamma} + \abs{\beta} + n \ge 1 $ and the total order is at most $ 7 $.
The order-zero term is bounded by
\begin{equation*}
\sup_{x,\mu} \abs{u_t\left( x,\mu \right)} \le \norm{\varphi}_{\infty}.
\end{equation*}
We now transfer the bounds for $u_t$ to $P_t\Phi_1$. In view of
\eqref{eq:rep-pt-with-ut-abs-assump}, $P_t\Phi_1$ is of the form
\begin{equation*}
F\left( \mu \right) = \int_{\R^d} g\left( x,\mu \right)\mu\left( dx \right),
\end{equation*}
and when differentiating a map of this form, the probability measure
 $\mu$ appears both inside the integrand $g\left( x,\mu \right)$ and
 also as the outer integrating measure. Both contributions have to be
 differentiated. To do this, let $ \xi \in L^2\left( \Omega;\R^d \right) $ have law $ \mu $, and let $ \eta \in L^2\left( \Omega;\R^d \right)$.
The lift to $L^2\left( \Omega;\R^d \right)$ of $ F $ is $\widetilde F\left( \xi \right) =
\E\left( g\left( \xi,\left[ \xi \right] \right) \right)$. For $ h \ne 0 $, we write
\begin{equation*}
\frac{\widetilde F\left( \xi + h \eta \right) - \widetilde F\left( \xi \right)}{h}
=
I_h + J_h,
\end{equation*}
where
\begin{align*}
I_h
=
\E\left(
\frac{
g\left( \xi + h \eta,\left[ \xi + h \eta \right] \right)
-
g\left( \xi,\left[ \xi + h \eta \right] \right)
}{h}
\right)\quad\mbox{and}\quad
J_h
=
\E\left(
\frac{
g\left( \xi,\left[ \xi + h \eta \right] \right)
-
g\left( \xi,\left[ \xi \right] \right)
}{h}
\right).
\end{align*}
The term $ I_h $ differentiates the first argument of $ g $, with the law frozen, and the term
$ J_h $ differentiates the dependence of the integrand on the law, with the spatial variable frozen.
Passing to the limit, we obtain
\begin{equation}
\label{eq:proof-prop-semigroup-basic-rule-abs-assump}
\partial_\mu F\left( \mu \right)\left( v \right)
=
\partial_x g\left( v,\mu \right)
+
\int_{\R^d} \partial_\mu g\left( x,\mu \right)\left( v \right)\mu\left( dx \right).
\end{equation}
The first term on the right-hand side comes from differentiating the outer measure
$ \mu\left( dx \right) $, and the second term comes from differentiating the parameter
$ \mu $ inside the integrand. We apply
\eqref{eq:proof-prop-semigroup-basic-rule-abs-assump} with
$ g = u_t $.
This yields
\begin{equation*}
\partial_\mu P_t\Phi_1\left( \mu \right)\left( v \right)
=
\partial_x u_t\left( v,\mu \right)
+
\int_{\R^d}
\partial_\mu u_t\left( x,\mu \right)\left( v \right)\mu\left( dx \right).
\end{equation*}
We now iterate the same argument.
Indeed, whenever
\begin{equation*}
F\left( \mu \right)
=
\int_{\R^d} g\left( x,\mu \right)\mu\left( dx \right),
\end{equation*}
and $ g $ has one additional mixed derivative, formula
\eqref{eq:proof-prop-semigroup-basic-rule-abs-assump} applies again.
Starting from $ g = u_t $ and repeating this differentiation rule finitely many times,
we obtain every mixed Lions derivative of $P_t\Phi_1$ entering
$ \norm{P_t\Phi_1}_{M^5\left( \Ptwo \right)} $. More precisely, every such derivative is a finite linear combination of terms of the two forms
\begin{equation}
\label{eq:proof-prop-semigroup-U-point-abs-assump}
\partial_x^\gamma \partial_v^\beta \partial_\mu^n
u_t\left( z,\mu \right)
\quad\mbox{and}\quad
\int_{\R^d}
\partial_x^\gamma \partial_v^\beta \partial_\mu^n
u_t\left( x,\mu \right)\mu\left( dx \right),
\end{equation}
where the total order of differentiation of $u_t$ is at most $ 7 $,
and $z$ denotes either one of the Lions variables introduced by the
successive differentiations, or the integration variable arising from
a previous application of the differentiation rule. The point-evaluation terms in
\eqref{eq:proof-prop-semigroup-U-point-abs-assump} are bounded by
\eqref{eq:proof-prop-semigroup-ut-derivative-bound-abs-assump}, and the integral
terms in
\eqref{eq:proof-prop-semigroup-U-point-abs-assump} satisfy the same bound because the
integrand is uniformly bounded and $ \mu $ is a probability measure. Here, the decay
in \eqref{eq:proof-prop-semigroup-ut-derivative-bound-abs-assump} comes from
Lemma \ref{lem:first-derivatives-decay} when the corresponding mixed derivative of the flow
has total order $ 1 $, and from Lemma \ref{lem:uniform-time-second-variation} when its total
order is at least $ 2 $. Consequently, every mixed Lions derivative of $ P_t\Phi_1 $ of
positive total order at most $ 5 $ is bounded by $ C e^{-\omega t} $, uniformly in all
parameters, while the order-zero term is bounded by $ \norm{\varphi}_{\infty} $. Hence,
\begin{equation}
\label{eq:proof-prop-semigroup-U-M5-abs-assump}
\sup_{t \ge 0} \norm{P_t\Phi_1}_{M^5\big( \Ptwo \big)} \le C.
\end{equation}
Moreover, applying the differentiation rule twice, every term in
$\partial_{\mu\mu}^2 P_t\Phi_1\left( \mu \right)\left( x,y \right)$ is again either a
point-evaluation term or an integral term built from positive-order derivatives of
$ u_t $. Therefore,
\eqref{eq:proof-prop-semigroup-ut-derivative-bound-abs-assump} gives
\begin{equation}
\label{eq:proof-prop-semigroup-U-second-abs-assump}
\sup_{\mu \in \Ptwo}
\sup_{x,y \in \R^d}
\abs{\partial_{\mu\mu}^2 P_t\Phi_1\left( \mu \right)\left( x,y \right)}
\le
C e^{-\omega t}.
\end{equation}
We now deal with $ P_t\Phi_2 $. Since $P_t\Phi_2 = \left( P_t\Phi_1
\right)^2$, repeated use of the ordinary product rule shows that every mixed Lions derivative of
$ P_t\Phi_2 $ of total order at most $ 5 $ is a finite sum of products of derivatives
of $ P_t\Phi_1 $. In each such product, either all factors are of order zero, in which case
the term is uniformly bounded, or at least one factor has positive total order, in which case
that factor contributes a decay $ e^{-\omega t} $ by the previous step, while all remaining
factors are controlled uniformly by
\eqref{eq:proof-prop-semigroup-U-M5-abs-assump}. Hence,
\begin{equation}
\label{eq:proof-prop-semigroup-V-M5-abs-assump}
\sup_{t \ge 0} \norm{P_t\Phi_2}_{M^5\big( \Ptwo \big)} \le C.
\end{equation}
Applying the same argument to the second Lions derivative, every term
in $\partial_{\mu\mu}^2 P_t\Phi_2\left( \mu \right)\left( x,y \right)$
contains at least one positive-order derivative of $ P_t\Phi_1 $, and therefore
\begin{equation}
\label{eq:proof-prop-semigroup-V-second-abs-assump}
\sup_{\mu \in \Ptwo}
\sup_{x,y \in \R^d}
\abs{\partial_{\mu\mu}^2 P_t\Phi_2\left( \mu \right)\left( x,y \right)}
\le
C e^{-\omega t}.
\end{equation}
This proves parts (i) and (ii). It remains to estimate $ \Gamma P_t \Phi_\ell $.
For $ \ell \in \left\{ 1,2 \right\} $, we again use the differentiation rule
\eqref{eq:proof-prop-semigroup-basic-rule-abs-assump}, now with
\begin{equation*}
g\left( v,\mu \right)
=
\frac{1}{2}
\operatorname{Tr}\left(
a\left( v,\mu \right)
\partial_{\mu\mu}^2 P_t \Phi_\ell\left( \mu \right)\left( v,v \right)
\right).
\end{equation*}
Thus, every mixed Lions derivative of
$ \Gamma P_t \Phi_\ell $ of total order at most $ 5 $ is a finite linear combination of
point-evaluation terms and integral terms obtained by differentiating $ g $. Each such
term is a product of bounded derivatives of $ a $ and derivatives of $ P_t\Phi_\ell $.
Moreover, every such term contains at least one factor carrying the decay
$ e^{-\omega t} $: either the factor
$ \partial_{\mu\mu}^2 P_t\Phi_\ell $
itself, controlled by
\eqref{eq:proof-prop-semigroup-U-second-abs-assump} and
\eqref{eq:proof-prop-semigroup-V-second-abs-assump}, or another positive-order
derivative of $ P_t\Phi_\ell $, controlled by
\eqref{eq:proof-prop-semigroup-U-M5-abs-assump} and
\eqref{eq:proof-prop-semigroup-V-M5-abs-assump} together with the decay estimates
established above. More precisely, the decay of this positive-order factor is obtained
from Lemma \ref{lem:first-derivatives-decay} if the underlying mixed derivative of the flow
has total order $ 1 $, and from Lemma \ref{lem:uniform-time-second-variation} if its total order is
at least $ 2 $. All remaining factors are bounded uniformly in $ t $. Therefore,
for every $ t \ge 0 $,
\begin{equation*}
\norm{\Gamma P_t \Phi_\ell}_{M^5\big( \Ptwo \big)}
\le
C e^{-\omega t},
\end{equation*}
which is part (iii) and concludes the proof.
\end{proof}
Now, we are finally in position to prove Proposition \ref{prop:uniform-time-weak-expansion-abs-assump} on the uniform-in-time weak expansion for $ \Phi_1 $ and $ \Phi_2 $.

\begin{proof}[Proof of Proposition \ref{prop:uniform-time-weak-expansion-abs-assump}]
Fix $ \ell \in \left\{ 1,2 \right\} $ and $ t \ge 0 $. We start by
deriving an exact dynamic identity for $\E\left( \Phi_\ell\left(
    \mu_t^N \right) \right) - \Phi_\ell\left( \mu_t \right)$. For $ s
\in \left[ 0,t \right] $, define $\Psi_s\left( \mu \right) =
P_{t-s}\Phi_\ell\left( \mu \right)$, so that $\Psi_t\left( \mu \right)
= \Phi_\ell\left( \mu \right)$ and $\Psi_0\left( \mu \right) =
P_t\Phi_\ell\left( \mu \right)$. Moreover, if $ \left( \mu_s \right)_{s \ge 0} $ denotes the deterministic McKean--Vlasov law
flow started from $ \nu $, then, for any $0 \le s \le t$, the semigroup property gives
\begin{equation*}
\Psi_s\left( \mu_s \right)
=
P_{t-s}\Phi_\ell\left( \mu_s \right)
=
\Phi_\ell\left( \mu_t \right),
\end{equation*}
and in particular $\Psi_0\left( \nu \right) = P_t\Phi_\ell\left( \nu
\right) = \Phi_\ell\left( \mu_t \right)$. We now apply \cite[Lemma~2.11(ii)]{CST2022}
to the time-dependent functional $\Psi_s$ on the interval $ \left[ 0,t
\right]$ (in the notation of \cite{CST2022}, we take $U_t\left( s,\mu \right) = \Psi_s\left( \mu \right) = P_{t-s}\Phi_\ell\left( \mu \right)$), which yields
\begin{align*}
\Psi_t\left( \mu_t^N \right)
&=
\Psi_0\left( \mu_0^N \right)
+
\frac{1}{2N}
\int_0^t
\int_{\R^d}
\operatorname{Tr}\left(
a\left( v,\mu_s^N \right)
\partial_{\mu\mu}^2 \Psi_s\left( \mu_s^N \right)\left( v,v \right)
\right)
\mu_s^N\left( dv \right) ds
+
M_{\ell,t}^N,
\end{align*}
where $ M_{\ell}^N $ is a square-integrable martingale with
$M_{\ell,0}^N = 0 $. By the definition of $ \Gamma $ given in
\eqref{defgamma-abs-assump}, the preceding identity becomes
\begin{equation*}
\Phi_\ell\left( \mu_t^N \right)
=
P_t\Phi_\ell\left( \mu_0^N \right)
+
\frac{1}{N}
\int_0^t
\Gamma P_{t-s}\Phi_\ell \left( \mu_s^N \right) ds
+
M_{\ell,t}^N.
\end{equation*}
Taking expectations and using the fact that $ \E\left( M_{\ell,t}^N \right) = 0 $, we obtain
\begin{align}
\label{eq:proof-uniform-time-exact-dynamic-abs-assump}
\E\left( \Phi_\ell\left( \mu_t^N \right) \right) - \Phi_\ell\left( \mu_t \right)
&=
\E\left( P_t\Phi_\ell\left( \mu_0^N \right) \right) - P_t\Phi_\ell\left( \nu \right)
+
\frac{1}{N}
\int_0^t
  \Gamma P_{t-s}\Phi_\ell \left( \mu_s \right) ds
  \nonumber \\
  &\quad +
\frac{1}{N}
\int_0^t
\left[
\E\left( \Gamma P_{t-s}\Phi_\ell \left( \mu_s^N \right) \right)
-
\Gamma P_{t-s}\Phi_\ell \left( \mu_s \right)
  \right] ds.
\end{align}
We now focus on the initial term $\E\left( P_t\Phi_\ell\left( \mu_0^N
  \right) \right) - P_t\Phi_\ell\left( \nu \right)$ of the above
expansion. Since $ \mu_0^N $ is the empirical measure of i.i.d.\ samples from $ \nu $,
we apply \cite[Theorem~2.14(ii)]{CST2022} to the functional $ P_t\Phi_\ell $ at the
measure $ \nu $.
This yields
\begin{equation}
\label{eq:proof-uniform-time-initial-abs-assump}
\E\left( P_t\Phi_\ell\left( \mu_0^N \right) \right) - P_t\Phi_\ell\left( \nu \right)
=
\frac{\beta_\ell\left( t \right)}{N}
+
R_{\ell,N}^0\left( t \right),
\end{equation}
where we may take
\begin{equation*}
\beta_\ell\left( t \right)
=
\frac{1}{2}
\widetilde{\E}\left[
\int_{\R^d}
\frac{\delta^2\left( P_t\Phi_\ell \right)}{\delta m^2}\left( \nu \right)\left( \widetilde \xi,y \right)
\left( \delta_{\widetilde \xi} - \nu \right)\left( dy \right)
\right]
\end{equation*}
for an independent random variable $ \widetilde \xi $ with law $
\nu$. Here, $ \delta^2/\delta m^2 $ denotes the second linear
functional derivative in the sense of \cite[Section~2.1.1 and Definition~2.1]{CST2022}. Its relation
with the Lions derivative used in the present note is given in
\cite[Theorem~2.6 and Lemma 2.7]{CST2022}.
Moreover, the remainder $R_{\ell,N}^0\left( t \right)$ satisfies
\begin{equation}
\label{eq:proof-uniform-time-initial-rem-abs-assump}
\sup_{t \ge 0} \abs{R_{\ell,N}^0\left( t \right)} \le \frac{C}{N^2},
\end{equation}
where the constant above (coming from the static expansion \cite[Theorem~2.14(ii)]{CST2022}) depends only on
a finite collection of bounds on the measure derivatives of the test functional
$ P_t\Phi_\ell $ required by that theorem.
By \cite[Theorem~2.6]{CST2022}, these are controlled by the corresponding Lions
derivatives, and
Proposition~\ref{prop:uniform-time-semigroup-abs-assump} provides bounds on those
derivatives uniformly in $t$.
\\~\\
We now estimate $ \beta_\ell\left( t \right) $.
By \cite[Theorem~2.6 and Lemma~2.7]{CST2022}, the second linear functional derivative
is controlled by the second Lions derivative.
Since $ \nu $ is fixed, this gives
\begin{equation*}
\abs{\beta_\ell\left( t \right)}
\le
C
\sup_{x,y \in \R^d}
\abs{
\partial_{\mu\mu}^2 \left( P_t\Phi_\ell \right)\left( \nu \right)\left( x,y \right)
}.
\end{equation*}
For $\ell \in \left\{ 1,2 \right\}$, this is bounded by
Proposition~\ref{prop:uniform-time-semigroup-abs-assump}. Hence,
\begin{equation}
\label{eq:proof-uniform-time-beta-decay-abs-assump}
\abs{\beta_\ell\left( t \right)} \le C e^{-\omega t},
\qquad t \ge 0.
\end{equation}
We now turn to the control of the integral error term in the expansion
\eqref{eq:proof-uniform-time-exact-dynamic-abs-assump}. By Proposition~\ref{prop:uniform-time-semigroup-abs-assump}, we have
\begin{equation*}
\norm{\Gamma P_{t-s}\Phi_\ell}_{M^5\big( \Ptwo \big)}
\le
C e^{-\omega\left( t-s \right)}
\end{equation*}
for every $0 \le s \le t$, and in particular
\begin{equation}
\label{eq:proof-uniform-time-G-pointwise-abs-assump}
\abs{\Gamma P_{t-s}\Phi_\ell\left( \mu \right)}
\le
C e^{-\omega\left( t-s \right)}
\end{equation}
for every $0 \le s \le t$ and $\mu \in \Ptwo$. Applying now \cite[Theorem~2.14(i)]{CST2022} to the functional $\Gamma P_{t-s}\Phi_\ell$ at the
measure $ \mu_s $, we obtain, for every $0 \le s \le t$,
\begin{equation*}
\abs{
\E\left( \Gamma P_{t-s}\Phi_\ell\left( \mu_s^N \right) \right)
-
\Gamma P_{t-s}\Phi_\ell\left( \mu_s \right)
}
\le
\frac{C e^{-\omega\left( t-s \right)}}{N}.
\end{equation*}
Here, the constant is uniform in $s$ and $t$. Indeed, the constant in the static estimate \cite[Theorem~2.14(i)]{CST2022} depends only
on a finite collection of bounds on the measure derivatives of the test functional
$\Gamma P_{t-s}\Phi_\ell$ required by that theorem.
By \cite[Theorem~2.6]{CST2022}, these are controlled by the corresponding Lions derivatives,
and Proposition~\ref{prop:uniform-time-semigroup-abs-assump} yields bounds on those derivatives of
$\Gamma P_{t-s}\Phi_\ell$ of order $ C e^{-\omega\left( t-s \right)}
$, uniformly in $s$ and $t$, and uniformly in the base measure.
Consequently,
\begin{equation}
\label{eq:proof-uniform-time-integral-error-abs-assump}
\abs{
\frac{1}{N}
\int_0^t
\left[
\E\left( \Gamma P_{t-s}\Phi_\ell \left( \mu_s^N \right) \right)
-
\Gamma P_{t-s}\Phi_\ell \left( \mu_s \right)
\right] ds
}
\le
\frac{C}{N^2}
\int_0^t e^{-\omega\left( t-s \right)} ds
\le
\frac{C}{\omega N^2}.
\end{equation}
Now, all that remains to do is to put all the previous estimates
together. Substituting \eqref{eq:proof-uniform-time-initial-abs-assump} into
\eqref{eq:proof-uniform-time-exact-dynamic-abs-assump}, we get
\begin{align*}
\E\left( \Phi_\ell\left( \mu_t^N \right) \right)
&=
\Phi_\ell\left( \mu_t \right)
+
\frac{\beta_\ell\left( t \right)}{N}
+
\frac{1}{N}
\int_0^t
\Gamma P_{t-s}\Phi_\ell \left( \mu_s \right) ds
\\
&\quad
+
R_{\ell,N}^0\left( t \right)
+
\frac{1}{N}
\int_0^t
\left[
\E\left( \Gamma P_{t-s}\Phi_\ell \left( \mu_s^N \right) \right)
-
\Gamma P_{t-s}\Phi_\ell \left( \mu_s \right)
\right] ds.
\end{align*}
We therefore define
\begin{equation*}
R_{\ell,N}\left( t \right)
=
R_{\ell,N}^0\left( t \right)
+
\frac{1}{N}
\int_0^t
\left[
\E\left( \Gamma P_{t-s}\Phi_\ell \left( \mu_s^N \right) \right)
-
\Gamma P_{t-s}\Phi_\ell\left( \mu_s \right)
\right] ds
\end{equation*}
and \eqref{eq:uniform-time-weak-expansion-abs-assump} follows immediately, while
\eqref{eq:proof-uniform-time-initial-rem-abs-assump} and
\eqref{eq:proof-uniform-time-integral-error-abs-assump} imply
\begin{equation*}
\sup_{t \ge 0} \abs{R_{\ell,N}\left( t \right)} \le \frac{C}{N^2},
\end{equation*}
which proves \eqref{eq:uniform-time-weak-rem-abs-assump}. Finally, \eqref{eq:proof-uniform-time-beta-decay-abs-assump} gives the first bound in
\eqref{eq:uniform-time-weak-coeff-abs-assump}, and
\eqref{eq:proof-uniform-time-G-pointwise-abs-assump} gives the second.
\end{proof}

\begin{corollary}[Uniform boundedness of the first-order coefficient]
\label{cor:uniform-time-alpha-abs-assump}
Assume  Assumption~\ref{ass:coeff-level-new}, and let $\varphi \in C_b^7\left( \R^d \right)$. Define
\begin{equation*}
\alpha_\ell\left( t \right)
=
\beta_\ell\left( t \right)
+
\int_0^t \Gamma P_{t-s} \Phi_\ell\left( \mu_s \right) ds.
\end{equation*}
Then, it holds that
\begin{equation}
\label{eq:uniform-time-alpha-bound-abs-assump}
\sup_{t \ge 0} \abs{\alpha_\ell\left( t \right)} < \infty.
\end{equation}
\end{corollary}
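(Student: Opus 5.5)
The bound follows directly from the two decay estimates in \eqref{eq:uniform-time-weak-coeff-abs-assump} of Proposition~\ref{prop:uniform-time-weak-expansion-abs-assump}, applied to each of the two terms in the definition of $\alpha_\ell$. Those estimates were established in the proof of that proposition, through Proposition~\ref{prop:uniform-time-semigroup-abs-assump}, under Assumption~\ref{ass:coeff-level-new}. The corollary assumes only Assumption~\ref{ass:coeff-level-new}, so I would quote the two ingredients from their source rather than from the proposition. Part (ii) of Proposition~\ref{prop:uniform-time-semigroup-abs-assump}, combined with the comparison between second linear functional derivatives and second Lions derivatives from \cite[Theorem~2.6 and Lemma~2.7]{CST2022}, gives \eqref{eq:proof-uniform-time-beta-decay-abs-assump}. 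Part (iii) gives \eqref{eq:proof-uniform-time-G-pointwise-abs-assump}.

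\textbf{First term.} By the estimate for $\beta_\ell$, $\abs{\beta_\ell(t)}\le C e^{-\omega t}\le C$ for all $t\ge 0$.

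\textbf{Integral term.} Part (iii) of Proposition~\ref{prop:uniform-time-semigroup-abs-assump} says $\norm{\Gamma P_{t-s}\Phi_\ell}_{M^5(\Ptwo)}\le Ce^{-\omega(t-s)}$. The $M^5$ norm dominates the supremum norm, and this bound holds uniformly over base measures. Evaluating at $\mu=\mu_s$ therefore gives
\begin{equation*}
\abs{\int_0^t \Gamma P_{t-s}\Phi_\ell(\mu_s)\,ds}\le C\int_0^t e^{-\omega(t-s)}ds\le \frac{C}{\omega}.
\end{equation*}
Measurability of $s\mapsto \Gamma P_{t-s}\Phi_\ell(\mu_s)$ follows from the continuity of the Lions derivatives and of the flow.

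\textbf{Conclusion.} Adding the two bounds gives $\sup_{t\ge0}\abs{\alpha_\ell(t)}\le C+C/\omega<\infty$. Since $\omega>0$ by \eqref{eq:def-omega-uniform-time}, the constant is finite.

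The only point needing care is that the decay constant in (iii) is uniform in the base measure, so that it can be evaluated along the moving measure $\mu_s$. That uniformity is part of the statement of Proposition~\ref{prop:uniform-time-semigroup-abs-assump}, so I do not expect a genuine obstacle here.
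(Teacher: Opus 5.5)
Your proposal is correct and matches the paper's proof. You bound $\abs{\beta_\ell(t)}\le Ce^{-\omega t}$ and $\abs{\Gamma P_{t-s}\Phi_\ell(\mu_s)}\le Ce^{-\omega(t-s)}$, then integrate to get $C+C/\omega$; tracing these two estimates back to Proposition~\ref{prop:uniform-time-semigroup-abs-assump} instead of citing \eqref{eq:uniform-time-weak-coeff-abs-assump} directly is only a bookkeeping refinement that better matches the corollary's stated hypotheses.
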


\begin{proof}
By \eqref{eq:uniform-time-weak-coeff-abs-assump}, we can write
\begin{equation*}
\abs{\alpha_\ell\left( t \right)}
\le
C e^{-\omega t}
+
C \int_0^t e^{-\omega \left( t-s \right)} ds
\le C + \frac{C}{\omega},
\end{equation*}
which proves \eqref{eq:uniform-time-alpha-bound-abs-assump}.
\end{proof}

\subsection{Abstract assumptions on the flow}
\label{subsec-coeff-level}

The arguments of the previous section show that the key input for the uniform-in-time
variance analysis is not the coefficient-level assumption itself, but rather the decay of the
derivatives of the decoupled flow
\eqref{eq:def-decoupl-mcvlasov-flow} associated with the law flow
$\left( \mu_t^\mu \right)_{t \ge 0}$. In this sense,
Assumption~\ref{ass:coeff-level-new} should be viewed only as a concrete sufficient
condition ensuring the more intrinsic
Assumption~\ref{ass:flow-level-new-abstract}. The latter is the assumption that is
actually used in the proof of the uniform-in-time weak expansion.

More precisely, Assumption~\ref{ass:coeff-level-new} combines a dissipativity condition in
the state variable with a smallness assumption on the measure derivatives of $b$ and
$\sigma$, and thereby provides a verifiable set of hypotheses under which
Assumption~\ref{ass:flow-level-new-abstract} holds. However, once
Assumption~\ref{ass:flow-level-new-abstract} is established---possibly by a different
argument and under a different set of structural assumptions---the proof of
Proposition~\ref{prop:uniform-time-weak-expansion-abs-assump} goes through in the same
way.

\begin{assumption}[Flow-level  dissipative regularity]
\label{ass:flow-level-new-abstract}
Let $\left( X_t^{x,\mu} \right)_{t \ge 0}$ be the decoupled flow
defined in \eqref{eq:def-decoupl-mcvlasov-flow}. Assume that
there exists a constant $ \omega > 0 $ such that the following hold.
\begin{enumerate}
\item[(a)]
For every $ p \in \left[ 2,14 \right] $, there exists a constant $ C_p > 0 $ such that,
for every $ t \ge 0 $,
\begin{equation*}
\sup_{x,\mu,\abs{h}\le 1}
\E\left(
\abs{\partial_x X_t^{x,\mu}\left[ h \right]}^p
\right)
+
\sup_{x,\mu,v}
\E\left(
\abs{\partial_\mu X_t^{x,\mu}\left( v \right)}^p
\right)
\le C_p e^{-p\omega t}.
\end{equation*}

\item[(b)]
For all multi-indices $ \gamma $, $ \beta $ and every integer $ n \ge 0 $ such that
$ 2 \le \abs{\gamma} + \abs{\beta} + n \le 7 $, and every $p \in
\left[ 2,\frac{14}{\abs{\gamma} + \abs{\beta} + n} \right]$, there exists a constant $ C_{p,\gamma,\beta,n} > 0 $ such that, for every $ t \ge 0 $,
\begin{equation*}
\sup_{x,\mu,v_1,\dots,v_n}
\E\left(
\abs{
\partial_x^\gamma \partial_v^\beta \partial_\mu^n
X_t^{x,\mu}\left( v_1,\dots,v_n \right)
}^p
\right)
\le C_{p,\gamma,\beta,n} e^{-p\omega t}.
\end{equation*}
\end{enumerate}
\end{assumption}
As we show in the previous section, working under this flow-level Assumption~\ref{ass:flow-level-new-abstract}, we first
transfer these properties to the function
\begin{equation*}
u_t\left( x,\mu \right) = \E\left( \varphi\left( X_t^{x,\mu} \right) \right),
\end{equation*}
and then to the propagated functionals $P_t \Phi_\ell$. This yields the bounds required to apply
the fixed-time weak expansion uniformly in time, and therefore gives a weak expansion for
$\E\left( \Phi_\ell\left( \mu_t^N \right) \right)$ with a remainder of order $N^{-2}$ uniformly in
$t \ge 0$.

\begin{proposition}[Fixed-time regularity of functionals of the decoupled flow]
Assume  Assumption~\ref{ass:coeff-level-new}. Then,
Assumption \ref{ass:flow-level-new-abstract} holds.
\end{proposition}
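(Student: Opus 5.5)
This is essentially a repackaging of Lemmas~\ref{lem:first-derivatives-decay} and \ref{lem:uniform-time-second-variation}, so the plan is to match each item of Assumption~\ref{ass:flow-level-new-abstract} to one of those lemmas. The only real issue is the gap between the discrete set $\mathcal P=\{2,4,\dots,14\}$ used in those lemmas and the continuous ranges $p\in[2,14]$ and $p\in[2,14/q]$ required in the assumption. We take $\omega$ to be the constant in \eqref{eq:def-omega-uniform-time}, which is positive by Assumption~\ref{ass:coeff-level-new}(iii).

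\emph{Part (a).} Fix $p\in[2,14]$ and choose $p'\in\mathcal P$ with $p\le p'$; for instance $p'=14$ always works. For the spatial derivative, Lemma~\ref{lem:first-derivatives-decay}(i) gives $\E(|\partial_x X_t^{x,\mu}[h]|^{p'})\le e^{-p'\omega t}|h|^{p'}$. Jensen's inequality then yields
\[
\E\left(|\partial_x X_t^{x,\mu}[h]|^{p}\right)\le \E\left(|\partial_x X_t^{x,\mu}[h]|^{p'}\right)^{p/p'}\le e^{-p\omega t}|h|^p .
\]
This is uniform in $x,\mu$ and in $|h|\le 1$. The same interpolation applied to Lemma~\ref{lem:first-derivatives-decay}(ii) gives
\[
\sup_{x,\mu,v}\E\left(|\partial_\mu X_t^{x,\mu}(v)|^p\right)\le C_{p'}^{p/p'}e^{-p\omega t}.
\]
Adding the two bounds gives (a) with $C_p=1+C_{p'}^{p/p'}$.

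\emph{Part (b).} Here nothing extra is needed. Lemma~\ref{lem:uniform-time-second-variation} is already stated for every $p\in[2,14/q]$, where $q=|\gamma|+|\beta|+n$ and $2\le q\le 7$, with the same $\omega$. In its proof, first-order factors are raised to powers $r_j=pq/q_j\le 14$, where $q_j$ is the order of the factor. These $r_j$ need not lie in $\mathcal P$. That use is legitimate precisely because of the interpolation in part (a): Lemma~\ref{lem:first-derivatives-decay} upgrades from $\mathcal P$ to all of $[2,14]$ in exactly the same way.

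\emph{Regularity inputs.} Finally, the existence of the flow derivatives and their equations is part of the setting of those lemmas, supplied by Proposition~\ref{prop:fixed-time-flow-u} under $b,\sigma\in C^{7,7}_{b,\mathrm{Lip}}$. Checking these inputs is the only point needing care. Since Lemma~\ref{lem:uniform-time-second-variation} is stated under Assumption~\ref{ass:coeff-level-new} alone, we take its conclusion as given and do not reprove the It\^o--Young closure.
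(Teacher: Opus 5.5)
Your proposal is correct and follows the paper's own argument, which simply cites Proposition~\ref{prop:fixed-time-flow-u}, Lemma~\ref{lem:first-derivatives-decay} and Lemma~\ref{lem:uniform-time-second-variation}. What you add is the step in part (a) passing from the exponent $p'=14\in\mathcal P$ to an arbitrary $p\in[2,14]$ via $\E\left(|Y|^p\right)\le \E\left(|Y|^{14}\right)^{p/14}$; the paper leaves this implicit, and you handle it correctly.
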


\begin{proof}
This is a direct application of Proposition \ref{prop:fixed-time-flow-u}, Lemma \ref{lem:first-derivatives-decay} and
Lemma \ref{lem:uniform-time-second-variation}.
\end{proof}

\section{Variance of the limiting fluctuations}
\label{limiting-variance-section}

\subsection{Representation and characterization of the limiting variance}

The uniform variance comparison used in the proof of the main theorem was stated
earlier as Proposition~\ref{prop:uniform-time-variance-comparison-abs-assump}. We
prove it first. The argument is based on the uniform weak expansions for the two
functionals $\Phi_1$ and $\Phi_2$, together with the definition of the limiting
variance. Recall that, for $\varphi \in C_b^7\left( \R^d \right)$,
Definition~\ref{def:uniform-time-limiting-variance-abs-assump} sets
\begin{equation*}
\sigma_t^2\left( \varphi \right)
=
\alpha_2\left( t \right)
-
2 \left\langle \mu_t,\varphi \right\rangle \alpha_1\left( t \right),
\qquad t \ge 0,
\end{equation*}
where $\alpha_1$ and $\alpha_2$ are the coefficients obtained in
Corollary~\ref{cor:uniform-time-alpha-abs-assump}. This definition is chosen so that
$\sigma_t^2\left(\varphi\right)$ is the coefficient governing the limiting behavior of
the finite-particle variances.

\begin{proof}[Proof of
  Proposition~\ref{prop:uniform-time-variance-comparison-abs-assump}]
By Proposition~\ref{prop:uniform-time-weak-expansion-abs-assump}, we
have the weak expansions
\begin{equation}
\label{eq:uniform-time-first-moment-abs-assump}
\E\left( \left\langle \mu_t^N,\varphi \right\rangle \right)
=
 \left\langle \mu_t,\varphi \right\rangle + \frac{\alpha_1\left( t \right)}{N} + R_{1,N}\left( t \right)
\end{equation}
and
\begin{equation}
\label{eq:uniform-time-second-moment-abs-assump}
\E\left( \left\langle \mu_t^N,\varphi \right\rangle^2 \right)
=
 \left\langle \mu_t,\varphi \right\rangle^2 + \frac{\alpha_2\left( t \right)}{N} + R_{2,N}\left( t \right),
\end{equation}
with
\begin{equation*}
\sup_{t \ge 0} \abs{R_{\ell,N}\left( t \right)} \le \frac{C}{N^2}
\qquad \text{and} \qquad
\sup_{t \ge 0} \abs{\alpha_\ell\left( t \right)} \le C,
\end{equation*}
where the second bound comes from applying Corollary~\ref{cor:uniform-time-alpha-abs-assump}.
Squaring \eqref{eq:uniform-time-first-moment-abs-assump}, subtracting the result from
\eqref{eq:uniform-time-second-moment-abs-assump}, and multiplying by $ N $, we obtain
\begin{align*}
\sigma_{N,t}^2\left( \varphi \right) - \sigma_t^2\left( \varphi \right)
&=
N R_{2,N}\left( t \right)
-
2 \left\langle \mu_t,\varphi \right\rangle N R_{1,N}\left( t \right)
-
\frac{\alpha_1\left( t \right)^2}{N}
-
2 \alpha_1\left( t \right) R_{1,N}\left( t \right)
-
N R_{1,N}\left( t \right)^2.
\end{align*}
Since $\abs{\left\langle \mu_t,\varphi \right\rangle} \le
\norm{\varphi}_\infty$, each term on the right-hand side of the above equation
is bounded by $ C/N $, uniformly in $ t $. This concludes the proof.
\end{proof}

\begin{remark}
Definition~\ref{def:uniform-time-limiting-variance-abs-assump} is equivalent to saying that $\sigma_t^2\left(\varphi\right)$ is the $1/N$-coefficient in the
weak expansion of $N\Var\left(\left\langle\mu_t^N,\varphi\right\rangle\right)$.
\end{remark}

\subsection{Backward PDE representation of $\sigma_t^2\left(\varphi\right)$}\label{S:BackwardPDErepresentationVar}

For $\mu \in \Ptwo$ and a smooth function $f \colon \R^d \to \R$, define the McKean--Vlasov generator
\begin{equation*}
\left( \mathcal L_\mu f \right)\left( x \right)
=
b\left( x,\mu \right)\cdot \nabla f\left( x \right)
+
\frac12 \operatorname{Tr}
\left(
a\left( x,\mu \right)\nabla^2 f\left( x \right)
\right),
\end{equation*}
as well as the linearized operator
\begin{equation*}
\left( \mathcal A_\mu f \right)\left( y \right)
=
\int_{\R^d}
\left(
\partial_\mu b\left( x,\mu \right)\left( y \right)\cdot \nabla f\left( x \right)
+
\frac12 \operatorname{Tr}
\left(
\partial_\mu a\left( x,\mu \right)\left( y \right)\nabla^2 f\left( x \right)
\right)
\right)
\mu\left( dx \right),
\end{equation*}
where $ \partial_\mu b\left( x,\mu \right)\left( y \right) \in \R^d $ and
$ \partial_\mu a\left( x,\mu \right)\left( y \right) \in \R^{d \times d} $
denote Lions derivatives.

\begin{proposition}\label{P:BackwardRepPDEVariance}
Assume Assumption~\ref{ass:smooth}. Let $\left(\mu_s\right)_{0\le s\le t}$ be the law flow of the McKean--Vlasov SDE~\eqref{eq:MV}.
Let $\varphi\in C_b^5\left(\R^d\right)$ and let $\left(\psi_s\right)_{0\le s\le t}$ be a classical solution to the backward equation
\begin{equation}
\label{eq:backwardpsi}
\partial_s\psi_s+\mathcal L_{\mu_s}\psi_s+\mathcal A_{\mu_s}\psi_s=0,\qquad s\in\left[0,t\right),\qquad \psi_t=\varphi.
\end{equation}
Then, the limiting variance in Definition \ref{def:uniform-time-limiting-variance-abs-assump} can be written as
\begin{equation}
\label{eq:siglimitPDE}
\sigma_t^2\left(\varphi\right)
=
\Var\left(\psi_0\left(X_0\right)\right)
+\int_0^t \E\left(\norm{\sigma\left(X_s,\mu_s\right)^\top\nabla\psi_s\left(X_s\right)}^2\right) ds,
\end{equation}
where $X$ solves \eqref{eq:MV}. In particular, $\sigma_t^2\left(\varphi\right)=\lim_{N\to\infty}\sigma_{N,t}^2\left(\varphi\right)$.
\end{proposition}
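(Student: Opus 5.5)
The plan is to identify $\sigma_t^2(\varphi)$ as $\lim_{N\to\infty}\sigma_{N,t}^2(\varphi)$. This is legitimate on a fixed horizon $[0,t]$ by the argument of Proposition~\ref{prop:uniform-time-variance-comparison-abs-assump}: only the fixed-time expansion of \cite{CST2022} is needed, so no uniformity in time is required. The limit is then computed by a martingale decomposition of $\langle\mu_t^N,\varphi\rangle$ built from $\psi$.

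\emph{Step 1: an approximate martingale.} Introduce the functional $\Psi_s(\mu)=\langle\mu,\psi_s\rangle$. One cannot use the exact semigroup functional $P_{t-s}\Phi_1$, since its derivatives are not explicit. The key observation is that \eqref{eq:backwardpsi} is exactly the linearization of $\mu\mapsto P_{t-s}\Phi_1(\mu)$ around the flow $\mu_s$. Indeed, $\partial_\mu P_{t-s}\Phi_1(\mu_s)(v)=\nabla\psi_s(v)$, i.e.\ $\psi_s=\frac{\delta}{\delta m}P_{t-s}\Phi_1(\mu_s)$ up to constants; this follows by differentiating $P_{t-s}\Phi_1(\mu_s)=\Phi_1(\mu_t)$ along the flow, and the $\mathcal A_{\mu_s}$ term accounts for the measure dependence of the coefficients. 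Apply It\^o's formula to $\langle\mu_s^N,\psi_s\rangle$ along the particle system \eqref{eq:IPS}:
\[
d\langle\mu_s^N,\psi_s\rangle=\langle\mu_s^N,\partial_s\psi_s+\mathcal L_{\mu_s^N}\psi_s\rangle ds+\frac1N\sum_{i}\nabla\psi_s(X_s^{i,N})^\top\sigma(X_s^{i,N},\mu_s^N)\,dB_s^i .
\]
Using \eqref{eq:backwardpsi}, the drift becomes $\langle\mu_s^N,\mathcal L_{\mu_s^N}\psi_s-\mathcal L_{\mu_s}\psi_s\rangle-\langle\mu_s^N,\mathcal A_{\mu_s}\psi_s\rangle$.

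\emph{Step 2: first-order cancellation.} Expand $\mathcal L_{\mu_s^N}-\mathcal L_{\mu_s}$ to first order in $\mu_s^N-\mu_s$ with the Lions derivative; this uses Assumption~\ref{ass:smooth} together with the $C^2$ regularity in $\mu$ of $b,\sigma$. The first-order piece equals $\langle\mu_s^N,\mathcal A_{\mu_s}\psi_s\rangle-\langle\mu_s,\mathcal A_{\mu_s}\psi_s\rangle$, up to an error $\mathcal O(W_2(\mu_s^N,\mu_s)\cdot|\mu_s^N-\mu_s|)$. Moreover $\langle\mu_s,\mathcal A_{\mu_s}\psi_s\rangle$ vanishes after subtracting the deterministic identity $\frac{d}{ds}\langle\mu_s,\psi_s\rangle=0$. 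This identity holds because $\langle\mu_s,\psi_s\rangle=\langle\mu_t,\varphi\rangle$, by the same computation on the limiting dynamics. Hence
\[
\sqrt N\big(\langle\mu_t^N,\varphi\rangle-\langle\mu_t,\varphi\rangle\big)=\sqrt N\big(\langle\mu_0^N,\psi_0\rangle-\langle\nu,\psi_0\rangle\big)+\mathcal M_t^N+\mathcal E_t^N,
\]
where $\mathcal M^N$ is the rescaled stochastic integral and $\mathcal E_t^N$ is a quadratic remainder of size $\sqrt N\int_0^t\E|\langle\mu_s^N-\mu_s,\cdot\rangle|^2ds=\mathcal O(N^{-1/2})$ in $L^2$. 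Proving this size requires the fixed-horizon propagation-of-chaos bound $\E\langle\mu_s^N-\mu_s,f\rangle^2\le C/N$ for smooth $f$, which itself follows from the weak expansion.

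\emph{Step 3: the variance.} The initial term is independent of the Brownian motions and has variance $\Var(\psi_0(X_0))$ exactly. The martingale term has expectation-of-bracket $\int_0^t\E\langle\mu_s^N,|\sigma^\top\nabla\psi_s|^2(\cdot,\mu_s^N)\rangle ds$, which converges to $\int_0^t\E\|\sigma(X_s,\mu_s)^\top\nabla\psi_s(X_s)\|^2ds$ by propagation of chaos and the continuity of $\sigma$ in $\mu$. Cross terms with $\mathcal E^N$ vanish as $N\to\infty$ by Cauchy--Schwarz. This gives $\lim_N\sigma_{N,t}^2=$ the right side of \eqref{eq:siglimitPDE}. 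Combined with the identification of $\lim_N\sigma_{N,t}^2$ with $\alpha_2-2\langle\mu_t,\varphi\rangle\alpha_1$, obtained from the fixed-time expansions \eqref{eq:uniform-time-first-moment-abs-assump} and \eqref{eq:uniform-time-second-moment-abs-assump}, this proves the proposition; nonnegativity is then immediate.

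The main obstacle is Step 2. There one must check carefully that the $\mathcal A_{\mu_s}$ term exactly cancels the linear response of $\mathcal L_{\mu_s^N}$, including the diffusion part with $\partial_\mu a$. One must also control the quadratic remainder by $o(N^{-1/2})$ using only $\varphi\in C_b^5$ and bounds on $\psi$ in $C_b^4$, which is where the classical-solution hypothesis on \eqref{eq:backwardpsi} is used.
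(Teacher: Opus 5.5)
Your route is genuinely different from the paper's. The paper first passes to the limit with the Fern\'andez--M\'el\'eard fluctuation theorem. It then applies Lemma~\ref{lem:ito_pairing} to the limiting linear SPDE, where the drift cancels exactly and no remainder appears. Finally it matches $\lim_N\sigma_{N,t}^2$ with $\sigma_t^2$ through Proposition~\ref{prop:uniform-time-variance-comparison-abs-assump}. Running the cancellation on the particle system instead would spare you the functional CLT and the passage from convergence in law to convergence of variances. But it puts the whole difficulty into $\mathcal E_t^N$, and Step~2 does not control it.

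The remainder $Q_s^N$ has two parts:
\begin{itemize}
\item $\langle\mu_s^N-\mu_s,(b(\cdot,\mu_s^N)-b(\cdot,\mu_s))\cdot\nabla\psi_s+\cdots\rangle$, which comes from integrating $x$ against $\mu_s^N$ rather than $\mu_s$;
\item the second-order Taylor remainder of $m\mapsto b(x,m),a(x,m)$ at $\mu_s$, i.e.\ second measure derivatives evaluated at random intermediate measures and integrated against $(\mu_s^N-\mu_s)^{\otimes 2}$.
\end{itemize}
Neither is a product of pairings $\langle\mu_s^N-\mu_s,f\rangle$ with a fixed smooth $f$, so the bound $\E\langle\mu_s^N-\mu_s,f\rangle^2\le C/N$ you invoke does not apply. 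What bounded second-order measure derivatives actually give is $\abs{Q_s^N}\le C\,W_2(\mu_s^N,\mu_s)^2$. However, $\sqrt N\,\E W_2(\mu_s^N,\mu_s)^2$ does not go to zero in general: already for i.i.d.\ samples from the uniform law on a cube it is at least of order $N^{1/2-2/d}$, which does not vanish once $d\ge 4$. In addition, your Cauchy--Schwarz step needs $\E(\mathcal E_t^N)^2\to0$, i.e.\ $\E(Q_s^N)^2=o(N^{-1})$. That is a fourth-order statement, which second-moment bounds cannot deliver. Note also that the first-order term equals $\langle\mu_s^N-\mu_s,\mathcal A_{\mu_s}\psi_s\rangle$ only if $\mathcal A_\mu$ is built from the linear functional derivatives $\frac{\delta b}{\delta m},\frac{\delta a}{\delta m}$. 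An expansion ``with the Lions derivative'' does not produce a pairing against $\mu_s^N-\mu_s$.

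The simplest repair is the route you set aside. Apply \cite[Lemma~2.11(ii)]{CST2022} to the exact functional $P_{t-s}\Phi_1$, as in the proof of Proposition~\ref{prop:uniform-time-weak-expansion-abs-assump}. The drift is then exactly $\frac1N\Gamma P_{t-s}\Phi_1(\mu_s^N)$, with no quadratic remainder at all. In fact, the definition of $\sigma_t^2$ gives the formula by pure algebra. Since $P_{t-s}\Phi_2=(P_{t-s}\Phi_1)^2$ and $P_{t-s}\Phi_1(\mu_s)=\langle\mu_t,\varphi\rangle$,
\[
\Gamma P_{t-s}\Phi_2(\mu_s)-2\langle\mu_t,\varphi\rangle\,\Gamma P_{t-s}\Phi_1(\mu_s)
=\int_{\R^d}\abs{\sigma(v,\mu_s)^\top\partial_\mu P_{t-s}\Phi_1(\mu_s)(v)}^2\mu_s(dv).
\]
Similarly, $\beta_2(t)-2\langle\mu_t,\varphi\rangle\beta_1(t)=\Var\big(\tfrac{\delta}{\delta m}P_t\Phi_1(\nu)(X_0)\big)$. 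Thus $\sigma_t^2(\varphi)$ equals the right-hand side of \eqref{eq:siglimitPDE} with $\nabla\psi_s$ replaced by $\partial_\mu P_{t-s}\Phi_1(\mu_s)$, and everything reduces to your ``key observation''.

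That observation still needs an actual proof. Differentiating the scalar identity $P_{t-s}\Phi_1(\mu_s)=\langle\mu_t,\varphi\rangle$ along the flow gives one number for each $s$, not the function $v\mapsto\partial_\mu P_{t-s}\Phi_1(\mu_s)(v)$. You need to:
\begin{itemize}
\item differentiate the backward Kolmogorov equation of $r\mapsto P_r\Phi_1$ in the measure variable at $\mu_s$;
\item check that $s\mapsto\frac{\delta}{\delta m}P_{t-s}\Phi_1(\mu_s)$ is a classical solution of \eqref{eq:backwardpsi};
\item invoke uniqueness of such solutions up to additive constants.
\end{itemize}
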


\begin{proof}
Set $\eta_s^N=\sqrt{N}\left(\mu_s^N-\mu_s\right)$, so that $G_t^N\left(\varphi\right)=\left\langle\eta_t^N,\varphi\right\rangle$.
Under global Lipschitz and smoothness assumptions of the type in Assumption~\ref{ass:smooth},
the functional fluctuation CLT for McKean--Vlasov particle systems
(see Fern\'andez--M\'el\'eard \cite{FM1997}) states that
$\eta^N$ converges in law (in suitable negative Sobolev topologies) to a centered Gaussian
process $\eta$ which solves, in weak form, the linear SPDE
\begin{equation*}
d\left\langle\eta_s,f\right\rangle
=
\left\langle\eta_s,\mathcal L_{\mu_s}f\right\rangle ds
+
\left\langle\eta_s,\mathcal A_{\mu_s}f\right\rangle ds
+
dM_s\left(f\right),
\end{equation*}
for every smooth $f$, where $M\left(f\right)$ is a continuous centered Gaussian martingale with quadratic variation
\begin{equation}
\label{eq:bracket}
\left\langle M\left(f\right)\right\rangle_t=\int_0^t\left\langle\mu_s,\norm{\sigma\left(\cdot,\mu_s\right)^\top\nabla f\left(\cdot\right)}^2\right\rangle ds.
\end{equation}
See also Delarue--Lacker--Ramanan \cite{DLR2019} for such fluctuation limits,
and the review \cite{CDreview} for propagation of chaos and fluctuation results.
\\~\\
Assuming that $\psi$ is a classical solution of
\eqref{eq:backwardpsi}, we now apply Lemma \ref{lem:ito_pairing} below with $\mathcal G_s =
\mathcal L_{\mu_s}+\mathcal A_{\mu_s}$ and the semimartingale
$s\mapsto \left\langle\eta_s,\psi_s\right\rangle$. This yields
\begin{equation*}
d\langle \eta_s,\psi_s\rangle
=
\langle \eta_s,\partial_s\psi_s\rangle ds
+\langle \eta_s,\partial_s\psi_s + \mathcal L_{\mu_s}\psi_s+\mathcal A_{\mu_s}\psi_s\rangle ds
+dM_s(\psi_s)
\end{equation*}
and the drift terms cancel by \eqref{eq:backwardpsi}. Integrating from
$0$ to $t$ hence yields the martingale representation
\begin{equation*}
\left\langle\eta_t,\varphi\right\rangle=\left\langle\eta_0,\psi_0\right\rangle+M_t\left(\psi\right).
\end{equation*}
By independence of the initial conditions and the Brownian motions,
$\left\langle\eta_0,\psi_0\right\rangle$ is a centered Gaussian variable with variance $\Var\left(\psi_0\left(X_0\right)\right)$
and is independent of the martingale term. Hence,
\begin{equation*}
\Var\left(\left\langle\eta_t,\varphi\right\rangle\right)=\Var\left(\left\langle\eta_0,\psi_0\right\rangle\right)+\E\left(\left\langle M\left(\psi\right)\right\rangle_t\right).
\end{equation*}
Using \eqref{eq:bracket} to make the quadratic variation explicit gives \eqref{eq:siglimitPDE}.
\\~\\
Finally, we have
$\sigma_{N,t}^2\left(\varphi\right)=\Var\left(\left\langle\eta_t^N,\varphi\right\rangle\right)$
and as
$\left\langle\eta_t^N,\varphi\right\rangle$ converges to
$\left\langle\eta_t,\varphi\right\rangle$ as $N \to \infty$,
we have $\lim_{N\to\infty}\sigma_{N,t}^2\left(\varphi\right)=\Var\left(\left\langle\eta_t,\varphi\right\rangle\right)$.
On the other hand, Proposition~\ref{prop:uniform-time-variance-comparison-abs-assump} identifies the same limit as $\sigma_t^2\left(\varphi\right)$,
so the two characterizations agree.
\end{proof}
The next lemma allows one to pass from a weak semimartingale formulation stated for fixed
test functions to the corresponding identity for smoothly time-dependent test functions.
This is used to insert the semigroup-evolved test functions into the
fluctuation equation.
\begin{lemma}
\label{lem:ito_pairing}
Fix $T>0$ and let $\eta_t$ be a random finite signed measure on $\R^d$ for each $t\in[0,T]$.
Assume that for every fixed test function $f\in C_b^2(\R^d)$, the real
process
\begin{equation*}
t\mapsto \langle \eta_t,f\rangle
\end{equation*}
is a continuous semimartingale admitting a decomposition
\begin{equation}
\label{eq:weak_semimart}
\langle \eta_t,f\rangle
=
\langle \eta_0,f\rangle + \int_0^t \langle \eta_s, \mathcal G_s f\rangle ds + M_t(f),
\end{equation}
where $(\mathcal G_s)_{s\in[0,T]}$ is a (possibly random, but progressively measurable) family of linear operators on $C_b^2(\R^d)$
and $M_t(f)$ is a continuous martingale.
\\~\\
Let $\psi \colon [0,T]\times\R^d\to\R$ be deterministic with $\psi\in
C^1\left([0,T];C_b^2(\R^d)\right)$, i.e., $t\mapsto
\psi_t=\psi(t,\cdot)$  is $C^1$ into $C_b^2$. Then, $\langle \eta_t,\psi_t\rangle$ is a continuous semimartingale and satisfies
\begin{equation}
\label{eq:pairing_ito}
d\langle \eta_t,\psi_t\rangle
=
\langle \eta_t,\partial_t\psi_t\rangle dt
+\langle \eta_t,\mathcal G_t\psi_t\rangle dt
+dM_t(\psi_t).
\end{equation}
\end{lemma}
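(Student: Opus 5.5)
The idea is a time-discretization of the pairing $\langle \eta_t,\psi_t\rangle$, combined with the weak decomposition \eqref{eq:weak_semimart} applied to fixed test functions on each subinterval. Fix $t\in[0,T]$ and a partition $0=t_0<t_1<\dots<t_n=t$ with mesh $\to 0$. We telescope:
\begin{equation*}
\langle \eta_t,\psi_t\rangle-\langle \eta_0,\psi_0\rangle
=
\sum_{k}\Big(\langle \eta_{t_{k+1}},\psi_{t_{k+1}}-\psi_{t_k}\rangle
+\big(\langle \eta_{t_{k+1}},\psi_{t_k}\rangle-\langle \eta_{t_k},\psi_{t_k}\rangle\big)\Big).
\end{equation*}

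\textbf{First sum.} Since $\psi\in C^1([0,T];C_b^2)$, we have $\psi_{t_{k+1}}-\psi_{t_k}=\int_{t_k}^{t_{k+1}}\partial_s\psi_s\,ds$ in $C_b$. Hence
\begin{equation*}
\langle \eta_{t_{k+1}},\psi_{t_{k+1}}-\psi_{t_k}\rangle=\int_{t_k}^{t_{k+1}}\langle\eta_{t_{k+1}},\partial_s\psi_s\rangle\,ds .
\end{equation*}
To let the mesh go to zero, I will use that $s\mapsto\langle\eta_s,f\rangle$ is continuous for each fixed $f$. I will also assume, or extract from the setting, that the total variation $\sup_{s\le T}\|\eta_s\|_{TV}$ is a.s. finite. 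With uniform continuity of $s\mapsto\partial_s\psi_s$ in sup norm, the sum then converges to $\int_0^t\langle\eta_s,\partial_s\psi_s\rangle ds$.

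\textbf{Second sum.} Applying \eqref{eq:weak_semimart} to the fixed function $f=\psi_{t_k}$ on $[t_k,t_{k+1}]$ gives
\begin{equation*}
\int_{t_k}^{t_{k+1}}\langle\eta_s,\mathcal G_s\psi_{t_k}\rangle ds+M_{t_{k+1}}(\psi_{t_k})-M_{t_k}(\psi_{t_k}).
\end{equation*}
For the drift part, I replace $\psi_{t_k}$ by $\psi_s$. The error is controlled by $\|\mathcal G_s(\psi_{t_k}-\psi_s)\|_\infty\lesssim\|\psi_{t_k}-\psi_s\|_{C_b^2}\to0$, provided $\mathcal G_s$ is bounded $C_b^2\to C_b$ uniformly in $s$. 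This holds for $\mathcal L_{\mu_s}+\mathcal A_{\mu_s}$ under the bounded-coefficient assumptions.

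For the martingale part, I use linearity of $f\mapsto M(f)$, which follows from linearity of the left side and of the drift in \eqref{eq:weak_semimart}. Then $\sum_k\int_{t_k}^{t_{k+1}}dM_s(\psi_{t_k})$ is a martingale transform with piecewise-constant integrand. It converges in $L^2$, uniformly in time, to the stochastic integral denoted $\int_0^t dM_s(\psi_s)$. For this I need the quadratic variation of $M(f)$ to be bounded by $C\|f\|_{C^1}^2\,dt$, as in \eqref{eq:bracket}; then the bracket of the difference is $\lesssim\int_0^t\|\psi_{t_k}-\psi_s\|^2ds\to0$.

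\textbf{Conclusion and main obstacle.} Summing the limits gives \eqref{eq:pairing_ito}. The limit process is a sum of continuous finite-variation integrals and a continuous martingale, hence a continuous semimartingale.

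The main obstacle is giving meaning to $dM_s(\psi_s)$ and justifying the $L^2$ convergence. This requires viewing $M$ as a martingale measure, or a cylindrical martingale, with a quadratic variation bound continuous in a $C_b^2$ norm. I would first try to define $\int_0^t dM_s(\psi_s)$ precisely as this $L^2$ limit, and verify that it is independent of the partition via the Cauchy property, using the bracket bound.
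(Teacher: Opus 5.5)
Your proposal is correct and follows essentially the same route as the paper's proof. Both telescope with the intermediate term $\langle \eta_{t_{k+1}},\psi_{t_k}\rangle$, apply the weak decomposition to the frozen test function $\psi_{t_k}$ on each subinterval, use the fundamental theorem of calculus for the increments of $\psi$, and pass to the limit as the mesh goes to zero.

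You make three additional conditions explicit: a pathwise bound on $\|\eta_s\|_{TV}$ uniform in $s$, uniform boundedness of $\mathcal G_s$ from $C_b^2$ to $C_b$, and a bracket bound for $M(f)$ continuous in $f$, which is what gives meaning to $\int_0^t dM_s(\psi_s)$. None of these is in the statement, but the paper's appeal to ``Riemann-sum convergences'' and ``standard arguments'' tacitly relies on conditions of this kind, so your version is, if anything, the more careful one.
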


\begin{proof}
Fix a partition $\pi=\{0=t_0<t_1<\cdots<t_n=t\}$ of $[0,t]$. Write the
telescoping sum
\begin{equation*}
\langle \eta_t,\psi_t\rangle-\langle \eta_0,\psi_0\rangle
=
\sum_{k=0}^{n-1}\left(\langle \eta_{t_{k+1}},\psi_{t_{k+1}}\rangle-\langle \eta_{t_k},\psi_{t_k}\rangle\right).
\end{equation*}
Now, we add and subtract $\langle \eta_{t_{k+1}},\psi_{t_k}\rangle$ to
get
\begin{align*}
\langle \eta_{t_{k+1}},\psi_{t_{k+1}}\rangle-\langle \eta_{t_k},\psi_{t_k}\rangle
=
\left(\langle \eta_{t_{k+1}},\psi_{t_k}\rangle-\langle \eta_{t_k},\psi_{t_k}\rangle\right)
+
\langle
  \eta_{t_{k+1}},\psi_{t_{k+1}}-\psi_{t_k}\rangle= (\mathrm{I})_k + (\mathrm{II})_k.
\end{align*}
For $(\mathrm{I})_k$, we apply \eqref{eq:weak_semimart} with the fixed test function $f=\psi_{t_k}$ on the interval
$[t_k,t_{k+1}]$, which yields
\begin{equation*}
(\mathrm{I})_k
=
\int_{t_k}^{t_{k+1}}\langle \eta_s,\mathcal G_s\psi_{t_k}\rangle ds
+M_{t_{k+1}}(\psi_{t_k})-M_{t_k}(\psi_{t_k}).
\end{equation*}
For $(\mathrm{II})_k$, the fundamental theorem of calculus implies that
\begin{equation*}
\psi_{t_{k+1}}-\psi_{t_k}=\int_{t_k}^{t_{k+1}}\partial_r\psi_r dr.
\end{equation*}
Hence,
\begin{equation*}
(\mathrm{II})_k
=
\int_{t_k}^{t_{k+1}}\langle \eta_{t_{k+1}},\partial_r\psi_r\rangle dr.
\end{equation*}
Summing over $k$ yields
\begin{align*}
\langle \eta_t,\psi_t\rangle-\langle \eta_0,\psi_0\rangle
&=
\sum_{k=0}^{n-1}\int_{t_k}^{t_{k+1}}\langle \eta_s,\mathcal
  G_s\psi_{t_k}\rangle ds\\
  &\quad
+\sum_{k=0}^{n-1}\left(M_{t_{k+1}}(\psi_{t_k})-M_{t_k}(\psi_{t_k})\right)
+\sum_{k=0}^{n-1}\int_{t_k}^{t_{k+1}}\langle \eta_{t_{k+1}},\partial_r\psi_r\rangle dr.
\end{align*}
Now, we let the mesh $|\pi|\to 0$. Since $t\mapsto \psi_t$ is continuous into $C_b^2$ and
$t\mapsto \eta_t$ is continuous in the sense that $t\mapsto \langle \eta_t,f\rangle$ is continuous for each fixed $f$,
we have the Riemann-sum convergences
\begin{equation*}
\sum_{k=0}^{n-1}\int_{t_k}^{t_{k+1}}\langle \eta_s,\mathcal G_s\psi_{t_k}\rangle ds
\longrightarrow
\int_0^t \langle \eta_s,\mathcal G_s\psi_s\rangle ds
\end{equation*}
and
\begin{equation*}
\sum_{k=0}^{n-1}\int_{t_k}^{t_{k+1}}\langle \eta_{t_{k+1}},\partial_r\psi_r\rangle dr
\longrightarrow
\int_0^t \langle \eta_s,\partial_s\psi_s\rangle ds.
\end{equation*}
On the other hand, the martingale sum converges to the stochastic
integral $\int_0^t dM_s(\psi_s)$ by standard arguments. Passing to the
limit yields \eqref{eq:pairing_ito} in integral form, which concludes
the proof.
\end{proof}

\section{Bounds for the Malliavin derivatives}
\label{sec:malliavin}

The goal of this section is to obtain appropriate uniform-in-time bounds for the first- and second-order Malliavin derivatives of the particle system $X_t^{i,N}$ for $i=1,\cdots,N$. In particular, we prove that under Assumption \ref{ass:coeff-level-new} the first- and second-order Malliavin derivatives of the particle system decay exponentially in time. Section \ref{S:BoundFirstMalliavinDerivatives} includes the calculations for the first-order Malliavin derivatives. Section \ref{S:BoundSecondMalliavinDerivatives} focuses on the second-order Malliavin derivatives.

We view $\left(B^1,\dots,B^N\right)$ as an isonormal Gaussian process over
\begin{equation*}
\mathfrak{H}_t=L^2\left(\left[0,t\right];\R^{mN}\right)\cong \bigoplus_{j=1}^N L^2\left(\left[0,t\right];\R^m\right),
\end{equation*}
with underlying space
$A=\left[0,t\right]\times\left\{1,\dots,N\right\}\times\left\{1,\dots,m\right\}$
and measure
\begin{equation*}
\lambda\left(ds,dj,d\alpha\right)=ds \sum_{j=1}^N\sum_{\alpha=1}^m\delta_{\left(j,\alpha\right)}.
\end{equation*}
For $s\in\left[0,t\right]$, $j\in\left\{1,\dots,N\right\}$ and $\alpha\in\left\{1,\dots,m\right\}$, we write $D_s^{j,\alpha}$ for the Malliavin derivative
with respect to the coordinate $B^{j,\alpha}$.

\subsection{Bounds for the first Malliavin derivatives}\label{S:BoundFirstMalliavinDerivatives}

For $i,j\in\left\{1,\dots,N\right\}$ and $0\le s\le u\le t$, we denote
the first Malliavin derivative matrix by
\begin{equation*}
D_s^{j}X_u^{i,N}=\left(D_s^{j,\alpha}X_u^{i,N}\right)_{\alpha=1}^m \in\R^{d\times m}.
\end{equation*}

\begin{lemma}[SDE for the first Malliavin derivative]
\label{lem:SDEU}
Assume that Assumptions \ref{ass:smooth}, and
\ref{ass:second-malliavin-uniform}(i) hold. Fix
$ 0 \le s \le u \le t $ and $ i,j \in \left\{ 1,\dots,N \right\} $. Then
$ D_s^j X_u^{i,N} $ satisfies, on $ [s,t] $,
\begin{align}
\label{eq:SDEU}
D_s^j X_u^{i,N}
&=
\sigma\left( X_s^{i,N},\mu_s^N \right)\mathbf{1}_{\left\{ i=j \right\}}
\nonumber\\
&\quad
+
\int_s^u
\Biggl(
\partial_x b\left( X_r^{i,N},\mu_r^N \right) D_s^j X_r^{i,N}
+
\frac1N \sum_{\ell=1}^N
\partial_\mu b\left( X_r^{i,N},\mu_r^N \right)\left( X_r^{\ell,N} \right)
D_s^j X_r^{\ell,N}
\Biggr) dr
\nonumber\\
&\quad
+
\int_s^u
\Biggl(
\partial_x \sigma\left( X_r^{i,N},\mu_r^N \right) D_s^j X_r^{i,N}
+
\frac1N \sum_{\ell=1}^N
\partial_\mu \sigma\left( X_r^{i,N},\mu_r^N \right)\left( X_r^{\ell,N} \right)
D_s^j X_r^{\ell,N}
\Biggr) dB_r^i.
\end{align}
Equivalently, for every $ \alpha \in \left\{ 1,\dots,m \right\} $, the column
$ D_s^{j,\alpha} X_u^{i,N} $ satisfies
\begin{align}
\label{eq:SDEU-column}
D_s^{j,\alpha} X_u^{i,N}
&=
\sigma\left( X_s^{i,N},\mu_s^N \right)e_\alpha \mathbf{1}_{\left\{ i=j \right\}}
\nonumber\\
&\quad
+
\int_s^u
\Biggl(
\partial_x b\left( X_r^{i,N},\mu_r^N \right) D_s^{j,\alpha} X_r^{i,N}
+
\frac1N \sum_{\ell=1}^N
\partial_\mu b\left( X_r^{i,N},\mu_r^N \right)\left( X_r^{\ell,N} \right)
D_s^{j,\alpha} X_r^{\ell,N}
\Biggr) dr
\nonumber\\
&\quad
+
\int_s^u
\Biggl(
\partial_x \sigma\left( X_r^{i,N},\mu_r^N \right) D_s^{j,\alpha} X_r^{i,N}
+
\frac1N \sum_{\ell=1}^N
\partial_\mu \sigma\left( X_r^{i,N},\mu_r^N \right)\left( X_r^{\ell,N} \right)
D_s^{j,\alpha} X_r^{\ell,N}
\Biggr) dB_r^i.
\end{align}
\end{lemma}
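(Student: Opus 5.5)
The plan is to view the particle system \eqref{eq:IPS} as a single SDE in $\R^{dN}$ driven by the $mN$-dimensional Brownian motion $(B^1,\dots,B^N)$, with coefficients
\begin{equation*}
\mathbf b^i(\mathbf x)=b\left(x^i,\tfrac1N\textstyle\sum_{k}\delta_{x^k}\right),\qquad \bm\sigma^{i}(\mathbf x)=\sigma\left(x^i,\tfrac1N\textstyle\sum_{k}\delta_{x^k}\right)
\end{equation*}
in the $i$-th block (block-diagonal diffusion acting on $B^i$). We then apply the classical result on Malliavin differentiability of SDE solutions (Nualart, Theorem 2.2.1 in \cite{nualart_malliavin_2006}). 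That result requires globally Lipschitz and $C^1$ coefficients with bounded derivatives. Global Lipschitz continuity of $\mathbf b,\bm\sigma$ in $\mathbf x$ follows from Assumption~\ref{ass:smooth}, together with $W_2(\mu^N_{\mathbf x},\mu^N_{\mathbf y})^2\le \frac1N\sum_k|x^k-y^k|^2$.

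The key computation is the Euclidean gradient of the empirical projection. For $f(x,\mu)$ with Lions derivative $\partial_\mu f$, the standard identity for empirical measures (e.g.\ from the lift characterization) gives
\begin{equation*}
\partial_{x^\ell}\, f\left(x^i,\mu^N_{\mathbf x}\right)=\partial_x f\left(x^i,\mu^N_{\mathbf x}\right)\mathbf 1_{\{\ell=i\}}+\frac1N\,\partial_\mu f\left(x^i,\mu^N_{\mathbf x}\right)\left(x^\ell\right).
\end{equation*}
We justify this by lifting, with $\mu^N_{\mathbf x}$ realized as the law of a uniform random index on $\{1,\dots,N\}$. The identity is applied with $f=b$ and with $f=\sigma$. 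Assumption~\ref{ass:second-malliavin-uniform}(i) gives existence, joint continuity and boundedness of $\partial_x b,\partial_\mu b,\partial_x\sigma,\partial_\mu\sigma$. Hence $\mathbf b,\bm\sigma\in C^1$ with bounded derivatives, and the theorem applies, so $X_u^{i,N}\in\mathbb D^{1,p}$ for all $p$.

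Nualart's theorem then says that $D_s^{j,\alpha}\mathbf X_u$ solves the linear equation
\begin{equation*}
D^{j,\alpha}_s\mathbf X_u=\bm\sigma_{\cdot,(j,\alpha)}(\mathbf X_s)+\int_s^u\nabla\mathbf b(\mathbf X_r)D_s^{j,\alpha}\mathbf X_r\,dr+\sum_{k,\beta}\int_s^u\nabla\bm\sigma_{\cdot,(k,\beta)}(\mathbf X_r)D_s^{j,\alpha}\mathbf X_r\,dB_r^{k,\beta}.
\end{equation*}
For $u<s$ the derivative vanishes.

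We now read off the $i$-th block. The initial term is $\sigma(X_s^{i,N},\mu_s^N)e_\alpha\mathbf 1_{\{i=j\}}$, because the $i$-th block of the diffusion only loads on $B^i$. Substituting the gradient formula above into the drift and diffusion terms yields exactly \eqref{eq:SDEU-column}. Collecting the columns over $\alpha$ gives the matrix form \eqref{eq:SDEU}.

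The only delicate step is the empirical-measure chain rule. Its justification requires differentiability of $\mathbf x\mapsto f(x^i,\mu^N_{\mathbf x})$ in the classical sense. This follows from the joint continuity of the Lions derivative, which is part of Assumption~\ref{ass:second-malliavin-uniform}(i), via the standard result on the finite-dimensional projection of $L$-differentiable functions (Carmona--Delarue, Prop.~5.35). Everything else is routine bookkeeping.
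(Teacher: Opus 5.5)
Your proof is correct. It rests on the same identity as the paper's, namely $\partial_{x^\ell} f(x^i,\mu^N_{\mathbf x})=\partial_x f(x^i,\mu^N_{\mathbf x})\mathbf 1_{\{\ell=i\}}+\frac1N\partial_\mu f(x^i,\mu^N_{\mathbf x})(x^\ell)$, but the argument is organized differently.

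The paper works term by term on the integral form of the $i$-th equation. It applies $D_s^{j,\alpha}$ to both sides, differentiates under the $dr$-integral, commutes $D_s^{j,\alpha}$ with the It\^o integral, and invokes a ``chain rule on $\R^d\times\Ptwo$''. This calculation takes for granted that $X_u^{i,N}$ is Malliavin differentiable and that this chain rule is valid.

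Your reduction to the stacked $\R^{dN}$-valued SDE supplies both of these points:
\begin{itemize}
\item The empirical-projection lemma, combined with the finite-dimensional Malliavin chain rule, is exactly the paper's chain rule made precise.
\item Nualart's theorem yields $X_u^{i,N}\in\mathbb D^{1,p}$ for every $p$ together with the linear equation, once you have checked global Lipschitz continuity via $W_2(\mu^N_{\mathbf x},\mu^N_{\mathbf y})^2\le\frac1N\sum_k\abs{x^k-y^k}^2$ and $C^1$ regularity with bounded gradient. You then read off \eqref{eq:SDEU-column} blockwise.
\end{itemize}

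The paper's route is shorter, and it is reused directly as the template for the second derivative in Lemma~\ref{lem:secondSDE}. Your route would need, at that stage, a $C^2$ version of the projection lemma (which Assumption~\ref{ass:second-malliavin-uniform}(ii) supports) and a second-order differentiability theorem for SDEs.

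One detail you should add. Nualart's theorem is stated for a deterministic initial condition, whereas $\mathbf X_0$ here is random. Since $\mathbf X_0$ is independent of $(B^1,\dots,B^N)$, you can apply the theorem for each fixed initial configuration on the product space. This is what the paper's remark $D_s^{j,\alpha}X_0^{i,N}=0$ encodes.
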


\begin{proof}
Using the particle system's integral form
\begin{equation*}
X_u^{i,N}
=
X_0^{i,N}
+
\int_0^u b\left( X_r^{i,N},\mu_r^N \right) dr
+
\int_0^u \sigma\left( X_r^{i,N},\mu_r^N \right) dB_r^i,
\end{equation*}
we fix $ \alpha \in \left\{ 1,\dots,m \right\} $ and apply the Malliavin derivative
$ D_s^{j,\alpha} $ to both sides. Since $ X_0^{i,N} $ is independent
of the Brownian motions, we have $D_s^{j,\alpha} X_0^{i,N} = 0$. For
the drift term, differentiation under the integral sign gives
\begin{equation*}
D_s^{j,\alpha}
\left(
\int_0^u b\left( X_r^{i,N},\mu_r^N \right) dr
\right)
=
\int_s^u
D_s^{j,\alpha} b\left( X_r^{i,N},\mu_r^N \right) dr,
\end{equation*}
because the Malliavin derivative vanishes for $ r < s $. By the chain rule on
$ \R^d \times \Ptwo $,
\begin{align*}
D_s^{j,\alpha} b\left( X_r^{i,N},\mu_r^N \right)
=
\partial_x b\left( X_r^{i,N},\mu_r^N \right) D_s^{j,\alpha} X_r^{i,N}
+
\frac1N \sum_{\ell=1}^N
\partial_\mu b\left( X_r^{i,N},\mu_r^N \right)\left( X_r^{\ell,N} \right)
D_s^{j,\alpha} X_r^{\ell,N}.
\end{align*}
We treat the stochastic integral similarly. The commutation relation between the
Malliavin derivative and the It\^o integral yields
\begin{align*}
D_s^{j,\alpha}
\left(
\int_0^u \sigma\left( X_r^{i,N},\mu_r^N \right) dB_r^i
\right)
=
\sigma\left( X_s^{i,N},\mu_s^N \right)e_\alpha \mathbf{1}_{\left\{ i=j \right\}}
+
\int_s^u
D_s^{j,\alpha} \sigma\left( X_r^{i,N},\mu_r^N \right) dB_r^i.
\end{align*}
Again by the chain rule,
\begin{align*}
D_s^{j,\alpha} \sigma\left( X_r^{i,N},\mu_r^N \right)
=
\partial_x \sigma\left( X_r^{i,N},\mu_r^N \right) D_s^{j,\alpha} X_r^{i,N}
+
\frac1N \sum_{\ell=1}^N
\partial_\mu \sigma\left( X_r^{i,N},\mu_r^N \right)\left( X_r^{\ell,N} \right)
D_s^{j,\alpha} X_r^{\ell,N}.
\end{align*}
Collecting the preceding identities proves \eqref{eq:SDEU-column}. Since
$ D_s^j X_u^{i,N} $ is the $ d \times m $ matrix whose $ \alpha $-th column is
$ D_s^{j,\alpha} X_u^{i,N} $, the matrix form \eqref{eq:SDEU} follows immediately.
\end{proof}

\begin{lemma}[Linear dissipative estimate]
\label{lem:linear-dissipative-estimate}
Fix $ p \ge 2 $, $ \kappa > 0 $, $ M \ge 0 $, and let $ 0 \le s \le u $. Let
$ \left( W_r \right)_{r \in [s,u]} $ be an $ m $-dimensional Brownian motion. Let
$ Y $ be an adapted continuous $ \R^d $-valued process satisfying
\begin{equation*}
Y_r
=
Y_s
+
\int_s^r
\left(
A_\ell Y_\ell + f_\ell
\right)
d\ell
+
\int_s^r
\left(
C_\ell\left[ Y_\ell \right] + g_\ell
\right)
dW_\ell,
\qquad
r \in [s,u],
\end{equation*}
where, for almost every $\ell \in [s,u] $, $ A_\ell$ is an $ \R^{d
  \times d}$-valued progressively measurable process, $ C_\ell $ is progressively measurable with values in the space of linear maps from
$ \R^d $ into $ \R^{d \times m} $, $ f_\ell $ is an $ \R^d $-valued
progressively measurable process, and $ g_\ell $ is an $ \R^{d \times
  m} $-valued progressively measurable process. Assume that, for all $ h \in \R^d $,
\begin{equation}
\label{eq:linear-dissipative-coercivity}
2 \left\langle h, A_\ell h \right\rangle
+
\left( p-1 \right)
\norm{C_\ell\left[ h \right]}^2
\le
-\kappa \abs{h}^2\quad\mbox{and}\quad
\norm{C_\ell\left[ h \right]}
\le
M \abs{h}.
\end{equation}
Assume also that
\begin{equation*}
\E\left( \abs{Y_s}^p \right)
+
\int_s^u
\E\left(
\abs{f_\ell}^p + \norm{g_\ell}^p
\right)
d\ell
<
\infty.
\end{equation*}
Then, it holds that
\begin{align}
\label{eq:linear-dissipative-estimate}
\E\left(
\abs{Y_u}^p
\right)^{1/p}
\le
e^{-\kappa \left( u-s \right)/4}
\E\left(
\abs{Y_s}^p
\right)^{1/p}
+
\Lambda_{p,\kappa,M}
\left(
\int_s^u
e^{-p \kappa \left( u-r \right)/4}
\E\left(
\abs{f_r}^p + \norm{g_r}^p
\right)
dr
\right)^{1/p},
\end{align}
where $\Lambda_{p,\kappa,M}$ and $\Xi_{p,\kappa}$ are the quantities
defined in Assumption \ref{ass:first-malliavin-uniform}.
\end{lemma}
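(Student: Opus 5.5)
The plan is to apply It\^o's formula to $\abs{Y_r}^p$ on $[s,u]$, exactly as in the proof of Lemma~\ref{lem:first-derivatives-decay}, but to handle the inhomogeneous terms $f$ and $g$ with explicit Young-inequality constants. After localization by stopping times, which removes the martingale part, the coercivity condition \eqref{eq:linear-dissipative-coercivity} absorbs the homogeneous contributions. This gives
\begin{align*}
\frac{d}{dr}\E\left(\abs{Y_r}^p\right)
&\le
-\frac{p\kappa}{2}\E\left(\abs{Y_r}^p\right)
+p\E\left(\abs{Y_r}^{p-1}\abs{f_r}\right)
+p\left(p-1\right)M\E\left(\abs{Y_r}^{p-1}\norm{g_r}\right)\\
&\quad
+\frac{p\left(p-1\right)}{2}\E\left(\abs{Y_r}^{p-2}\norm{g_r}^2\right).
\end{align*}
The cross term $2\langle C_rY_r,g_r\rangle$ coming from $\norm{C_rY_r+g_r}^2$ has been bounded by $2M\abs{Y_r}\norm{g_r}$. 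The It\^o correction term with the factor $(p-2)$ is bounded using the Cauchy--Schwarz inequality $\abs{(C_rY_r+g_r)^\top Y_r}^2\le\abs{Y_r}^2\norm{C_rY_r+g_r}^2$, as before.

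Next, each of the three source terms will be bounded by $\frac{p\kappa}{12}\E\abs{Y_r}^p$ plus an explicit multiple of $\E(\abs{f_r}^p+\norm{g_r}^p)$. Their total coefficient is then at most $\frac{p\kappa}{4}$, which leaves the net decay rate $-\frac{p\kappa}{4}$.

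For the first two terms I will use Young's inequality $ab^{p-1}\le \frac{p-1}{p}\varepsilon b^p+\frac1p\varepsilon^{-(p-1)}a^p$ with $\varepsilon=\kappa/(12(p-1))$, applied with $a=p\abs{f_r}$ and $a=p(p-1)M\norm{g_r}$ respectively. This produces the constant $\bigl(\tfrac{12(p-1)}{\kappa}\bigr)^{p-1}\bigl(1+((p-1)M)^p\bigr)$, up to the exact normalization.

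For the third term I will use the Young inequality $a^{p-2}b^2\le\frac{p-2}{p}\delta a^p+\frac2p\delta^{-(p-2)/2}b^p$, already employed in the paper, with $\delta=\kappa/(6(p-1)(p-2))$. This produces $\Xi_{p,\kappa}$. In the case $p=2$ the term is simply $\norm{g_r}^2$, which explains the convention $\Xi_{2,\kappa}=1$.

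Matching the constants exactly to $\Lambda_{p,\kappa,M}^p=\bigl(\tfrac{12(p-1)}{\kappa}\bigr)^{p-1}(1+((p-1)M)^p)+\Xi_{p,\kappa}$ is the \emph{main bookkeeping obstacle}. I would first try splitting the budget $\frac{p\kappa}{4}$ into three equal parts and then adjust the Young parameters so that the resulting constants are dominated by $\Lambda^p$. Since the lemma only requires an upper bound, any slack in these choices is harmless.

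With the bound $\frac{d}{dr}\E\abs{Y_r}^p\le -\frac{p\kappa}{4}\E\abs{Y_r}^p+\Lambda^p_{p,\kappa,M}\E(\abs{f_r}^p+\norm{g_r}^p)$ in hand, the variation-of-constants (Gronwall) formula gives
\begin{equation*}
\E\left(\abs{Y_u}^p\right)\le e^{-p\kappa(u-s)/4}\E\left(\abs{Y_s}^p\right)+\Lambda^p_{p,\kappa,M}\int_s^u e^{-p\kappa(u-r)/4}\E\left(\abs{f_r}^p+\norm{g_r}^p\right)dr.
\end{equation*}
Taking $p$-th roots and using the elementary inequality $(x+y)^{1/p}\le x^{1/p}+y^{1/p}$ then yields \eqref{eq:linear-dissipative-estimate}.

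The integrability hypothesis is what justifies the localization. Moments of $Y$ are finite by standard linear SDE estimates, provided that $A$ is bounded or, at least, that the stopped processes have finite moments. One then lets the stopping times tend to infinity using Fatou's lemma, so that the differential inequality holds in integrated form.
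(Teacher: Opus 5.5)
Your proposal is correct and follows essentially the paper's proof: It\^o's formula for $\abs{Y_r}^p$, Cauchy--Schwarz on the $(p-2)$ correction, coercivity, three Young splittings, localization with Fatou, and then $p$-th roots. The only difference is that the paper multiplies by the weight $e^{p\kappa(r-s)/4}$ pathwise before stopping and taking expectations, which spares it from justifying a Gronwall argument for $\E\abs{Y_r}^p$ in integrated form.

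On the bookkeeping, apply Young to $p\abs{Y_r}^{p-1}\abs{f_r}$ with your $\varepsilon=\kappa/(12(p-1))$ and $a=\abs{f_r}$, not $a=p\abs{f_r}$, which costs an extra factor $p^{p-1}$. Each of the three source terms then absorbs only $\frac{\kappa}{12}\abs{Y_r}^p$ and produces exactly the constants making up $\Lambda_{p,\kappa,M}^p$. Since $\frac{\kappa}{4}\le\frac{p\kappa}{4}$, the net rate $-\frac{p\kappa}{4}$ survives.
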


\begin{proof}
Set $\Sigma_r
=
C_r\left[ Y_r \right] + g_r$, $r \in [s,u]$. We apply It\^o's formula to the function $ y \mapsto \abs{y}^p $. Since $ p \ge 2 $,
\begin{align*}
\nabla \left( \abs{y}^p \right)
=
p \abs{y}^{p-2} y\quad\mbox{and}\quad
D^2 \left( \abs{y}^p \right)
=
p \abs{y}^{p-2} I_d
+
p \left( p-2 \right) \abs{y}^{p-4} y \otimes y.
\end{align*}
Therefore,
\begin{align*}
d \abs{Y_r}^p
&=
p \abs{Y_r}^{p-2}
\left\langle Y_r, A_r Y_r + f_r \right\rangle
dr
+
p \abs{Y_r}^{p-2}
\left\langle Y_r, \Sigma_r dW_r \right\rangle
+
\frac{p}{2}
\abs{Y_r}^{p-2}
\norm{\Sigma_r}^2
  dr
  \nonumber\\
&\quad
+
\frac{p \left( p-2 \right)}{2}
\abs{Y_r}^{p-4}
\abs{\Sigma_r^\top Y_r}^2
dr.
\end{align*}
Since $\abs{\Sigma_r^\top Y_r}^2
\le
\abs{Y_r}^2 \norm{\Sigma_r}^2$, we get
\begin{align*}
d \abs{Y_r}^p
\le
p \abs{Y_r}^{p-2}
\left\langle Y_r, A_r Y_r + f_r \right\rangle
dr
+
p \abs{Y_r}^{p-2}
\left\langle Y_r, \Sigma_r dW_r \right\rangle
+
\frac{p \left( p-1 \right)}{2}
\abs{Y_r}^{p-2}
\norm{\Sigma_r}^2
dr.
\end{align*}
We now multiply by the fixed exponential weight
$ e^{p \kappa \left( r-s \right)/4} $. Using the product rule,
\begin{align*}
d \left(
e^{p \kappa \left( r-s \right)/4} \abs{Y_r}^p
\right)
&=
\frac{p \kappa}{4}
e^{p \kappa \left( r-s \right)/4}
\abs{Y_r}^p
dr
+
e^{p \kappa \left( r-s \right)/4}
d \abs{Y_r}^p
\nonumber\\
&\le
e^{p \kappa \left( r-s \right)/4}
\Biggl[
\frac{p \kappa}{4} \abs{Y_r}^p
+
p \abs{Y_r}^{p-2}
\left\langle Y_r, A_r Y_r + f_r \right\rangle
\Biggr]
dr
\nonumber\\
&\quad
+
\frac{p \left( p-1 \right)}{2}
e^{p \kappa \left( r-s \right)/4}
\abs{Y_r}^{p-2}
\norm{\Sigma_r}^2
dr
+
p e^{p \kappa \left( r-s \right)/4}
\abs{Y_r}^{p-2}
\left\langle Y_r, \Sigma_r dW_r \right\rangle .
\end{align*}
Next, using $\norm{\Sigma_r}^2
=
\norm{C_r\left[ Y_r \right] + g_r}^2
\le
\norm{C_r\left[ Y_r \right]}^2
+
2 \norm{C_r\left[ Y_r \right]} \norm{g_r}
+
\norm{g_r}^2$, we obtain
\begin{align*}
d \left(
e^{p \kappa \left( r-s \right)/4} \abs{Y_r}^p
\right)
&\le
e^{p \kappa \left( r-s \right)/4}
\Biggl[
\frac{p \kappa}{4} \abs{Y_r}^p
+
p \abs{Y_r}^{p-2}
\left\langle Y_r, A_r Y_r \right\rangle
+
\frac{p \left( p-1 \right)}{2}
\abs{Y_r}^{p-2}
\norm{C_r\left[ Y_r \right]}^2
\Biggr]
dr
\nonumber\\
&\quad
+
e^{p \kappa \left( r-s \right)/4}
\Biggl[
p \abs{Y_r}^{p-1} \abs{f_r}
+
p \left( p-1 \right)
\abs{Y_r}^{p-2}
\norm{C_r\left[ Y_r \right]} \norm{g_r}
\nonumber\\
&\quad
+
\frac{p \left( p-1 \right)}{2}
\abs{Y_r}^{p-2}
\norm{g_r}^2
\Biggr]
dr
+
p e^{p \kappa \left( r-s \right)/4}
\abs{Y_r}^{p-2}
\left\langle Y_r, \Sigma_r dW_r \right\rangle.
\end{align*}
By \eqref{eq:linear-dissipative-coercivity},
\begin{align*}
p \abs{Y_r}^{p-2}
\left\langle Y_r, A_r Y_r \right\rangle
+
\frac{p \left( p-1 \right)}{2}
\abs{Y_r}^{p-2}
\norm{C_r\left[ Y_r \right]}^2
&=
\frac{p}{2}
\abs{Y_r}^{p-2}
\left(
2 \left\langle Y_r, A_r Y_r \right\rangle
+
\left( p-1 \right)
\norm{C_r\left[ Y_r \right]}^2
\right)
\\
&
\le
-\frac{p \kappa}{2}
\abs{Y_r}^p.
\end{align*}
Hence, we can write
\begin{align*}
d \left(
e^{p \kappa \left( r-s \right)/4} \abs{Y_r}^p
\right)
&\le
-\frac{p \kappa}{4}
e^{p \kappa \left( r-s \right)/4}
\abs{Y_r}^p
dr
\nonumber\\
&\quad
+
e^{p \kappa \left( r-s \right)/4}
\Biggl[
p \abs{Y_r}^{p-1} \abs{f_r}
+
p \left( p-1 \right)
\abs{Y_r}^{p-2}
\norm{C_r\left[ Y_r \right]} \norm{g_r}
  \nonumber\\
  &\quad
+
\frac{p \left( p-1 \right)}{2}
\abs{Y_r}^{p-2}
\norm{g_r}^2
\Biggr]
dr
+
p e^{p \kappa \left( r-s \right)/4}
\abs{Y_r}^{p-2}
\left\langle Y_r, \Sigma_r dW_r \right\rangle .
\end{align*}
Using \eqref{eq:linear-dissipative-coercivity}, we have $\norm{C_r\left[ Y_r \right]}
\le
M \abs{Y_r}$, and therefore
\begin{align}
d \left(
e^{p \kappa \left( r-s \right)/4} \abs{Y_r}^p
\right)
&\le
-\frac{p \kappa}{4}
e^{p \kappa \left( r-s \right)/4}
\abs{Y_r}^p
dr
+
e^{p \kappa \left( r-s \right)/4}
\Biggl[
p \abs{Y_r}^{p-1} \abs{f_r}
+
p \left( p-1 \right) M
\abs{Y_r}^{p-1} \norm{g_r}
\nonumber\\
&\quad
+
\frac{p \left( p-1 \right)}{2}
\abs{Y_r}^{p-2}
\norm{g_r}^2
\Biggr]
dr
+
p e^{p \kappa \left( r-s \right)/4}
\abs{Y_r}^{p-2}
\left\langle Y_r, \Sigma_r dW_r \right\rangle .
\label{eq:linear-dissipative-proof-6}
\end{align}
We now estimate the three source terms separately. For the first term, Young's inequality with exponents
$ p/(p-1) $ and $ p $ gives, for every $ r $,
\begin{equation}
\label{eq:linear-dissipative-proof-young-1}
p \abs{Y_r}^{p-1} \abs{f_r}
\le
\frac{\kappa}{12} \abs{Y_r}^p
+
\left(
\frac{12 \left( p-1 \right)}{\kappa}
\right)^{p-1}
\abs{f_r}^p.
\end{equation}

Applying the same inequality with
$ \abs{f_r} $ replaced by $ \left( p-1 \right) M \norm{g_r} $, we get
\begin{equation}
\label{eq:linear-dissipative-proof-young-2}
p \left( p-1 \right) M \abs{Y_r}^{p-1} \norm{g_r}
\le
\frac{\kappa}{12} \abs{Y_r}^p
+
\left(
\frac{12 \left( p-1 \right)}{\kappa}
\right)^{p-1}
\left(
\left( p-1 \right) M
\right)^p
\norm{g_r}^p.
\end{equation}
If $ p > 2 $, Young's inequality with exponents
$ p/(p-2) $ and $ p/2 $ gives
\begin{equation}
\label{eq:linear-dissipative-proof-young-3}
\frac{p \left( p-1 \right)}{2}
\abs{Y_r}^{p-2}
\norm{g_r}^2
\le
\frac{\kappa}{12} \abs{Y_r}^p
+
\left( p-1 \right)
\left(
\frac{6 \left( p-1 \right)\left( p-2 \right)}{\kappa}
\right)^{\left( p-2 \right)/2}
\norm{g_r}^p.
\end{equation}
If $ p = 2 $, this term is simply
\begin{equation}
\label{eq:linear-dissipative-proof-young-3-bis}
\frac{p \left( p-1 \right)}{2}
\abs{Y_r}^{p-2}
\norm{g_r}^2
=
\norm{g_r}^2.
\end{equation}
Substituting
\eqref{eq:linear-dissipative-proof-young-1},
\eqref{eq:linear-dissipative-proof-young-2},
and either
\eqref{eq:linear-dissipative-proof-young-3}
or
\eqref{eq:linear-dissipative-proof-young-3-bis}
into \eqref{eq:linear-dissipative-proof-6}, the three
$ \kappa \abs{Y_r}^p / 12 $ contributions exactly absorb the negative term
$ - p \kappa \abs{Y_r}^p / 4 $. Hence
\begin{align}
  \label{eq:linear-dissipative-proof-7}
d \left(
e^{p \kappa \left( r-s \right)/4} \abs{Y_r}^p
\right)
\le
\Lambda_{p,\kappa,M}^p
e^{p \kappa \left( r-s \right)/4}
\left(
\abs{f_r}^p + \norm{g_r}^p
\right)
dr
+
p e^{p \kappa \left( r-s \right)/4}
\abs{Y_r}^{p-2}
\left\langle Y_r, \Sigma_r dW_r \right\rangle .
\end{align}
We now localize the stochastic integral. For $ n \ge 1 $, define
\begin{equation*}
\tau_n
=
\inf\left\{
r \in [s,u] \colon
\int_s^r
e^{p \kappa \left( \ell-s \right)/2}
\abs{Y_\ell}^{2p-4}
\abs{\Sigma_\ell^\top Y_\ell}^2
d\ell
\ge n
\right\}
\wedge u.
\end{equation*}
Then, the stopped stochastic integral in \eqref{eq:linear-dissipative-proof-7} is a true
martingale on $ [s,u] $. Integrating from $ s $ to $ \tau_n $ and taking expectations, we get
\begin{align*}
\E\left(
e^{p \kappa \left( \tau_n-s \right)/4}
\abs{Y_{\tau_n}}^p
\right)
\le
\E\left( \abs{Y_s}^p \right)
+
\Lambda_{p,\kappa,M}^p
\E\left(
\int_s^{\tau_n}
e^{p \kappa \left( r-s \right)/4}
\left(
\abs{f_r}^p + \norm{g_r}^p
\right)
dr
\right).
\end{align*}
Letting $ n \to \infty $ and applying Fatou's lemma on the left-hand
side as well as monotone convergence
on the right-hand side yield
\begin{align*}
\E\left(
e^{p \kappa \left( u-s \right)/4}
\abs{Y_u}^p
\right)
\le
\E\left( \abs{Y_s}^p \right)
+
\Lambda_{p,\kappa,M}^p
\int_s^u
e^{p \kappa \left( r-s \right)/4}
\E\left(
\abs{f_r}^p + \norm{g_r}^p
\right)
dr.
\end{align*}
Multiplying by $ e^{-p \kappa \left( u-s \right)/4} $ gives
\begin{align*}
\E\left(
\abs{Y_u}^p
\right)
\le
e^{-p \kappa \left( u-s \right)/4}
\E\left(
\abs{Y_s}^p
\right)
+
\Lambda_{p,\kappa,M}^p
\int_s^u
e^{-p \kappa \left( u-r \right)/4}
\E\left(
\abs{f_r}^p + \norm{g_r}^p
\right)
dr.
\end{align*}
Taking the $ p $-th root and using
$ \left( a+b \right)^{1/p} \le a^{1/p} + b^{1/p} $ yields
\eqref{eq:linear-dissipative-estimate}.
\end{proof}

\begin{proposition}[Uniform-in-time first Malliavin derivative bounds]
\label{prop:firstbounds-uniform}
Let $p\in\left\{4,8\right\}$. Assume
Assumptions~\ref{ass:smooth}, \ref{ass:second-malliavin-uniform}(i),
\ref{ass:coeff-level-new}(i)(ii), and
\ref{ass:first-malliavin-uniform}. Then, there exists a constant
$C_p>0$ such that, for all $N\ge 1$, all $0\le s\le u$, and all
$i,j\in\left\{1,\dots,N\right\}$,
\begin{equation}
\label{eq:Ubounds-uniform}
\E\left(
\norm{D_s^j X_u^{i,N}}^p
\right)^{1/p}
\le
C_p e^{-\kappa_p \left( u-s \right)/8}
\left(
\mathbf{1}_{\left\{ i=j \right\}}
+
\frac{1}{N}\mathbf{1}_{\left\{ i\ne j \right\}}
\right).
\end{equation}
In particular,
\begin{equation}
\label{eq:Ubounds-average-uniform}
\frac{1}{N}\sum_{i=1}^N
\E\left(
\norm{D_s^j X_u^{i,N}}^p
\right)^{1/p}
\le
\frac{C_p}{N} e^{-\kappa_p \left( u-s \right)/8}.
\end{equation}
\end{proposition}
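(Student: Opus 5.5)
Fix $j$, $s$ and a column index $\alpha$, and write $Y^i_u = D_s^{j,\alpha}X_u^{i,N}$ for $u\ge s$. By Lemma~\ref{lem:SDEU}, each $Y^i$ solves a linear SDE driven by $B^i$. Its homogeneous part is
\[
A_r=\partial_x b\left(X_r^{i,N},\mu_r^N\right),\qquad C_r=\partial_x\sigma\left(X_r^{i,N},\mu_r^N\right),
\]
and its forcing terms are
\[
f^i_r=\frac1N\sum_{\ell}\partial_\mu b\left(X_r^{i,N},\mu_r^N\right)\left(X_r^{\ell,N}\right)Y^\ell_r,\qquad
g^i_r=\frac1N\sum_{\ell}\partial_\mu \sigma\left(X_r^{i,N},\mu_r^N\right)\left(X_r^{\ell,N}\right)Y^\ell_r .
\]
The initial value is $Y^i_s=\sigma(X_s^{i,N},\mu_s^N)e_\alpha\mathbf 1_{\{i=j\}}$. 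Lemma~\ref{lem:spatial-coercivity} gives the coercivity condition \eqref{eq:linear-dissipative-coercivity} with $\kappa=\kappa_p$ and $M=M_\sigma$, so Lemma~\ref{lem:linear-dissipative-estimate} applies. Since $\sigma$ is bounded (Assumption~\ref{ass:first-malliavin-uniform}), $\E(|Y^i_s|^p)^{1/p}\le \norm{\sigma}_\infty\mathbf 1_{\{i=j\}}$. Using \eqref{eq:small-measure-dependence} and Minkowski's inequality,
\[
\E\left(|f^i_r|^p+\norm{g^i_r}^p\right)\le 2\gamma^p\Big(\frac1N\sum_\ell \E(|Y^\ell_r|^p)^{1/p}\Big)^p .
\]
The key point is that this forcing is the same for every $i$.

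We then close a two-level system. Define
\[
a(u)=\sup_{r\in[s,u]}e^{\kappa_p(r-s)/8}\,\E(|Y^j_r|^p)^{1/p},\qquad
b(u)=\sup_{r\in[s,u]}e^{\kappa_p(r-s)/8}\max_{i\ne j}\E(|Y^i_r|^p)^{1/p}.
\]
Set $\bar m(r)=\frac1N\sum_\ell \E(|Y^\ell_r|^p)^{1/p}\le e^{-\kappa_p(r-s)/8}\left(\frac{a}{N}+b\right)$. Inserting this into \eqref{eq:linear-dissipative-estimate} gives
\[
\Lambda\, 2^{1/p}\gamma\Big(\int_s^u e^{-p\kappa_p(u-r)/4}e^{-p\kappa_p(r-s)/8}dr\Big)^{1/p}\left(\frac aN+b\right),
\]
with $\Lambda=\Lambda_{p,\kappa_p,M_\sigma}$. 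The integral is at most $\frac{8}{p\kappa_p}e^{-p\kappa_p(u-s)/8}$, which is exactly where the constant $(8/(p\kappa_p))^{1/p}$ in Assumption~\ref{ass:first-malliavin-uniform} comes from. Write $\theta=2^{1/p}\Lambda\gamma(8/(p\kappa_p))^{1/p}<1$. Since the initial-value term decays at rate $\kappa_p/4\ge\kappa_p/8$, we obtain
\[
a\le \norm{\sigma}_\infty+\theta\left(\frac aN+b\right),\qquad b\le \theta\left(\frac aN+b\right).
\]
The second inequality gives $b\le \frac{\theta}{1-\theta}\frac aN$. Substituting into the first gives $a(1-\frac{\theta}{(1-\theta)N})\le\norm{\sigma}_\infty$, that is, $a\le \norm{\sigma}_\infty+\frac{\theta}{1-\theta}\frac{a}{N}$.

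The main obstacle is closing this for all $N\ge1$ and making it rigorous. For the first issue, summing directly avoids the smallness requirement on $N$. The quantity $S=\sum_\ell e^{\kappa_p(\cdot-s)/8}\E(|Y^\ell|^p)^{1/p}$ satisfies $S\le \norm{\sigma}_\infty+N\theta\, S/N$, so $S\le\norm{\sigma}_\infty/(1-\theta)$. Then $\bar m\le C/N$, and each of $a,b$ follows from its own inequality. For the second issue, $a$ and $b$ must be finite a priori on compact intervals so that the absorption is legitimate. This follows from standard finite-horizon $L^p$ bounds for the linear Malliavin SDE under the bounded derivatives of Assumption~\ref{ass:second-malliavin-uniform}(i), and exchangeability handles the maximum over $i\ne j$.

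Finally, summing the column bounds over $\alpha$ gives \eqref{eq:Ubounds-uniform} for the matrix norm. Bound \eqref{eq:Ubounds-average-uniform} follows by splitting the sum over $i$ into the single term $i=j$, of size $C$, and the $N-1$ terms $i\ne j$, each of size $C/N$.
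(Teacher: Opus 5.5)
Your proposal is correct. Once you pass to the summed quantity $S$, which is $N M_u$ in the paper's notation, it is essentially the paper's argument: apply Lemma~\ref{lem:linear-dissipative-estimate} to each particle with forcing bounded by $2^{1/p}\gamma$ times the particle average, absorb the weighted supremum of that average using $\theta=\eta_p<1$, and substitute back. The two-level $(a,b)$ system is an unnecessary detour, since bounding $\frac{N-1}{N}b$ by $b$ is what forces large $N$; your remark that the weighted suprema are finite a priori on compact intervals makes explicit a point the paper uses only implicitly in the absorption step.
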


\begin{proof}
Fix $ p\in\left\{4,8\right\} $, $ j \in \left\{ 1,\dots,N \right\} $,
$ 0 \le s \le u $, and
$ \alpha \in \left\{ 1,\dots,m \right\} $. For $ r \in \left[ s,u \right] $, define
\begin{equation*}
u_i\left( r \right)
=
\E\left(
\abs{D_s^{j,\alpha} X_r^{i,N}}^p
\right)^{1/p}\quad\mbox{and}\quad
\bar u\left( r \right)
=
\frac{1}{N}\sum_{\ell=1}^N u_\ell\left( r \right).
\end{equation*}
By Lemma~\ref{lem:SDEU}, the process $ D_s^{j,\alpha} X_r^{i,N} $ satisfies, on
$ \left[ s,u \right] $,
\begin{align*}
d\left( D_s^{j,\alpha} X_r^{i,N} \right)
&=
\Biggl(
\partial_x b\left( X_r^{i,N},\mu_r^N \right)
\left[ D_s^{j,\alpha} X_r^{i,N} \right]
+
G_r^{i,\alpha}
\Biggr) dr
+
\Biggl(
\partial_x \sigma\left( X_r^{i,N},\mu_r^N \right)
\left[ D_s^{j,\alpha} X_r^{i,N} \right]
+
H_r^{i,\alpha}
\Biggr) dB_r^i,
\end{align*}
where
\begin{align*}
G_r^{i,\alpha}
=
\frac{1}{N}\sum_{\ell=1}^N
\partial_\mu b\left( X_r^{i,N},\mu_r^N \right)\left( X_r^{\ell,N} \right)
D_s^{j,\alpha} X_r^{\ell,N}
\end{align*}
and $H_r^{i,\alpha}$ is the same with $b$ replaced by $\sigma$,
with initial condition $D_s^{j,\alpha} X_s^{i,N}
=
\mathbf{1}_{\left\{ i=j \right\}}
\sigma\left( X_s^{i,N},\mu_s^N \right)e_\alpha$. We first estimate the forcing terms. By Minkowski's inequality and the measure-dependence bound
\eqref{eq:small-measure-dependence}, we can write
\begin{align*}
\E\left(
\abs{G_r^{i,\alpha}}^p
\right)^{1/p}
&\le
\frac{1}{N}\sum_{\ell=1}^N
\E\left(
\abs{
\partial_\mu b\left( X_r^{i,N},\mu_r^N \right)\left( X_r^{\ell,N} \right)
D_s^{j,\alpha} X_r^{\ell,N}
}^p
\right)^{1/p}
\nonumber\\
&\le
\frac{\gamma}{N}\sum_{\ell=1}^N
\E\left(
\abs{D_s^{j,\alpha} X_r^{\ell,N}}^p
\right)^{1/p}
=
\gamma \bar u\left( r \right).
\end{align*}
Similarly, we have
\begin{align*}
\E\left(
\norm{H_r^{i,\alpha}}^p
\right)^{1/p}
&\le
\frac{1}{N}\sum_{\ell=1}^N
\E\left(
\norm{
\partial_\mu \sigma\left( X_r^{i,N},\mu_r^N \right)\left( X_r^{\ell,N} \right)
D_s^{j,\alpha} X_r^{\ell,N}
}^p
\right)^{1/p}
\nonumber\\
&\le
\frac{\gamma}{N}\sum_{\ell=1}^N
\E\left(
\abs{D_s^{j,\alpha} X_r^{\ell,N}}^p
\right)^{1/p}
=
\gamma \bar u\left( r \right).
\end{align*}
Hence,
\begin{equation}
\label{eq:first-malliavin-forcing-bounds}
\E\left(
\abs{G_r^{i,\alpha}}^p + \norm{H_r^{i,\alpha}}^p
\right)^{1/p}
\le
2^{1/p} \gamma \bar u\left( r \right).
\end{equation}
Now, by Assumption~\ref{ass:coeff-level-new}(i) and the definition of $ M_\sigma $,
the coefficients $A_r = \partial_x b\left( X_r^{i,N},\mu_r^N \right)$
and $C_r = \partial_x \sigma\left( X_r^{i,N},\mu_r^N \right)$ satisfy the assumptions of Lemma~\ref{lem:linear-dissipative-estimate} with
$ \kappa = \kappa_p $ and $ M = M_\sigma $. Applying that lemma to
$Y_r = D_s^{j,\alpha} X_r^{i,N}$, $f_r = G_r^{i,\alpha}$ and $g_r =
H_r^{i,\alpha}$, we obtain
\begin{align*}
u_i\left( u \right)
&\le
e^{-\kappa_p \left( u-s \right)/4}
\E\left(
\abs{D_s^{j,\alpha} X_s^{i,N}}^p
\right)^{1/p}
\nonumber\\
&\quad
+
\Lambda_{p,\kappa_p,M_\sigma}
\left(
\int_s^u
e^{-p\kappa_p \left( u-r \right)/4}
\E\left(
\abs{G_r^{i,\alpha}}^p + \norm{H_r^{i,\alpha}}^p
\right)
dr
\right)^{1/p}.
\end{align*}
Since $ \sigma $ is bounded, we have $\E\left(
\abs{D_s^{j,\alpha} X_s^{i,N}}^p
\right)^{1/p}
\le
\norm{\sigma}_{\infty}\mathbf{1}_{\left\{ i=j \right\}}$, and combining this with \eqref{eq:first-malliavin-forcing-bounds}, we get
\begin{equation}
\label{eq:first-malliavin-u-i-2}
u_i\left( u \right)
\le
\norm{\sigma}_{\infty}
e^{-\kappa_p \left( u-s \right)/4}
\mathbf{1}_{\left\{ i=j \right\}}
+
2^{1/p}\Lambda_{p,\kappa_p,M_\sigma} \gamma
\left(
\int_s^u
e^{-p\kappa_p \left( u-r \right)/4}
\bar u\left( r \right)^p
dr
\right)^{1/p}.
\end{equation}
Define
\begin{equation*}
\eta_p
=
2^{1/p}\Lambda_{p,\kappa_p,M_\sigma} \gamma
\left(
\frac{8}{p\kappa_p}
\right)^{1/p}.
\end{equation*}
By Assumption~\ref{ass:first-malliavin-uniform}, applied with the present value of
$p\in\left\{4,8\right\}$, we have $\eta_p<1$. For $ v \in [s,u] $, define
\begin{equation*}
M_v
=
\sup_{r \in [s,v]}
e^{\kappa_p \left( r-s \right)/8}
\bar u\left( r \right).
\end{equation*}
Then, for every $ r \in [s,u] $, $\bar u\left( r \right)
\le
e^{-\kappa_p \left( r-s \right)/8}
M_u$. Substituting this into \eqref{eq:first-malliavin-u-i-2}, we obtain
\begin{align*}
u_i\left( u \right)
&\le
\norm{\sigma}_{\infty}
e^{-\kappa_p \left( u-s \right)/4}
\mathbf{1}_{\left\{ i=j \right\}}
+
2^{1/p}\Lambda_{p,\kappa_p,M_\sigma} \gamma
\left(
\int_s^u
e^{-p\kappa_p \left( u-r \right)/4}
\bar u\left( r \right)^p
dr
\right)^{1/p}
\\
&\le
\norm{\sigma}_{\infty}
e^{-\kappa_p \left( u-s \right)/4}
\mathbf{1}_{\left\{ i=j \right\}}
+
2^{1/p}\Lambda_{p,\kappa_p,M_\sigma} \gamma
\left(
\int_s^u
e^{-p\kappa_p \left( u-r \right)/4}
e^{-p\kappa_p \left( r-s \right)/8}
dr
\right)^{1/p}
M_u
\\
&=
\norm{\sigma}_{\infty}
e^{-\kappa_p \left( u-s \right)/4}
\mathbf{1}_{\left\{ i=j \right\}}
+
2^{1/p}\Lambda_{p,\kappa_p,M_\sigma} \gamma
e^{-\kappa_p \left( u-s \right)/8}
\left(
\int_s^u
e^{-p\kappa_p \left( u-r \right)/8}
dr
\right)^{1/p}
M_u
\\
&\le
\norm{\sigma}_{\infty}
e^{-\kappa_p \left( u-s \right)/4}
\mathbf{1}_{\left\{ i=j \right\}}
+
2^{1/p}\Lambda_{p,\kappa_p,M_\sigma} \gamma
e^{-\kappa_p \left( u-s \right)/8}
\left(
\frac{8}{p\kappa_p}
\right)^{1/p}
M_u
\\
&\le
e^{-\kappa_p \left( u-s \right)/8}
\left(
\norm{\sigma}_{\infty}\mathbf{1}_{\left\{ i=j \right\}}
+
\eta_p M_u
\right).
\end{align*}
Averaging over $ i $ yields
\begin{equation*}
\bar u\left( u \right)
\le
e^{-\kappa_p \left( u-s \right)/8}
\left(
\frac{\norm{\sigma}_{\infty}}{N}
+
\eta_p M_u
\right).
\end{equation*}
Taking the supremum over $ u \in [s,v] $, we get
\begin{equation*}
M_v
\le
\frac{\norm{\sigma}_{\infty}}{N}
+
\eta_p M_v.
\end{equation*}
Using the fact that $\eta_p <
1$, we conclude that for any $v \in [s,u]$,
\begin{equation}
\label{eq:first-malliavin-M-bound}
M_v
\le
\frac{\norm{\sigma}_{\infty}}{\left( 1-\eta_p \right)N}.
\end{equation}
Substituting \eqref{eq:first-malliavin-M-bound} back into the estimate for
$ u_i\left( u \right) $, we find
\begin{equation*}
u_i\left( u \right)
\le
e^{-\kappa_p \left( u-s \right)/8}
\left(
\norm{\sigma}_{\infty}\mathbf{1}_{\left\{ i=j \right\}}
+
\frac{\eta_p \norm{\sigma}_{\infty}}{\left( 1-\eta_p \right)N}
\right).
\end{equation*}
Since $ N \ge 1 $, the second term on the right-hand side is bounded by a constant
times $ \mathbf{1}_{\left\{ i=j \right\}} $ when $ i=j $, and is of order $ 1/N $ when
$ i\ne j $. After enlarging the constant, we obtain
\begin{equation}
\label{eq:first-malliavin-column-bound}
u_i\left( u \right)
\le
C_p e^{-\kappa_p \left( u-s \right)/8}
\left(
\mathbf{1}_{\left\{ i=j \right\}}
+
\frac{1}{N}\mathbf{1}_{\left\{ i\ne j \right\}}
\right).
\end{equation}
This proves the desired estimate for each fixed column
$ D_s^{j,\alpha} X_u^{i,N} $. Finally, by Minkowski's inequality,
\begin{equation*}
\E\left(
\norm{D_s^j X_u^{i,N}}^p
\right)^{1/p}
\le
\sum_{\alpha=1}^m
\E\left(
\abs{D_s^{j,\alpha} X_u^{i,N}}^p
\right)^{1/p},
\end{equation*}
and \eqref{eq:Ubounds-uniform} follows from
\eqref{eq:first-malliavin-column-bound} after enlarging the constant once more.
Averaging over $ i $ then gives \eqref{eq:Ubounds-average-uniform}.
\end{proof}

\subsection{Bounds for the second Malliavin derivatives}\label{S:BoundSecondMalliavinDerivatives}

Fix $0\le r\le s\le t$, $u\in\left[s,t\right]$, $i,j,k\in\left\{1,\dots,N\right\}$ and $\alpha,\beta\in\left\{1,\dots,m\right\}$.
We now study the second Malliavin derivative
$D_r^{k,\beta}D_s^{j,\alpha}X_u^{i,N}$. For an integer $p\geq2$, let
$\kappa_p>0$ be the constant from Assumption~\ref{ass:coeff-level-new}(i).
For the purposes of this section, we shall define the constant
\begin{equation}
\label{eq:def-omega-second-malliavin}
\hat{\omega}
=
\frac18 \min\left\{ \kappa_4,\kappa_8 \right\}.
\end{equation}

\begin{lemma}[SDE for the second Malliavin derivative]
\label{lem:secondSDE}
Assume that Assumptions \ref{ass:smooth},
and \ref{ass:second-malliavin-uniform} hold. Fix
$ 0 \le r \le s \le t $, $ u \in [s,t] $,
$ i,j,k \in \left\{ 1,\dots,N \right\} $, and
$ \alpha,\beta \in \left\{ 1,\dots,m \right\} $.
Then, the process $\left(
D_r^{k,\beta} D_s^{j,\alpha} X_u^{i,N}
\right)_{u \in [s,t]}$ solves the SDE
\begin{align}
\label{eq:secondSDE}
D_r^{k,\beta} D_s^{j,\alpha} X_u^{i,N}
&=
\mathcal I_{r,s}^{i,j,k,\alpha,\beta}
\nonumber\\
&\quad
+
\int_s^u
\left(
\partial_x b\left( X_\rho^{i,N},\mu_\rho^N \right)
D_r^{k,\beta} D_s^{j,\alpha} X_\rho^{i,N}
+
G_{r,s}^{i,j,k,\alpha,\beta}(\rho)
+
R_{r,s}^{i,j,k,\alpha,\beta}(\rho)
\right)
d\rho
\nonumber\\
&\quad
+
\int_s^u
\left(
\partial_x \sigma\left( X_\rho^{i,N},\mu_\rho^N \right)
D_r^{k,\beta} D_s^{j,\alpha} X_\rho^{i,N}
+
H_{r,s}^{i,j,k,\alpha,\beta}(\rho)
+
S_{r,s}^{i,j,k,\alpha,\beta}(\rho)
\right)
dB_\rho^i ,
\end{align}
where $\mathcal I_{r,s}^{i,j,k,\alpha,\beta}$ denotes the initial
condition given by
\begin{align*}
\mathcal I_{r,s}^{i,j,k,\alpha,\beta}
&=
\mathbf{1}_{\left\{ i=j \right\}}
\partial_x \sigma\left( X_s^{i,N},\mu_s^N \right)
D_r^{k,\beta} X_s^{i,N}
e_\alpha
+
\mathbf{1}_{\left\{ r=s \right\}}
\mathbf{1}_{\left\{ i=k \right\}}
\partial_x \sigma\left( X_s^{i,N},\mu_s^N \right)
D_s^{j,\alpha} X_s^{i,N}e_\beta
\nonumber\\
&\quad
+
\mathbf{1}_{\left\{ i=j \right\}} \frac1N \sum_{\ell=1}^N
\partial_\mu \sigma\left( X_s^{i,N},\mu_s^N \right)\left( X_s^{\ell,N} \right)
D_r^{k,\beta} X_s^{\ell,N} e_\alpha
\nonumber\\
&\quad
+
\mathbf{1}_{\left\{ r=s \right\}}
\mathbf{1}_{\left\{ i=k \right\}}
\frac1N \sum_{\ell=1}^N
\partial_\mu \sigma\left( X_s^{i,N},\mu_s^N \right)\left( X_s^{\ell,N} \right)
D_s^{j,\alpha} X_s^{\ell,N} e_\beta ,
\end{align*}
with $ e_\alpha $ denoting the $ \alpha $-th vector of the canonical basis of $ \R^m $,
\begin{align}
\label{eq:defGsecond}
G_{r,s}^{i,j,k,\alpha,\beta}(\rho)
&=
\frac1N \sum_{\ell=1}^N
\partial_\mu b\left( X_\rho^{i,N},\mu_\rho^N \right)\left( X_\rho^{\ell,N} \right)
D_r^{k,\beta} D_s^{j,\alpha} X_\rho^{\ell,N},
\end{align}
and $H_{r,s}^{i,j,k,\alpha,\beta}(\rho) $ is defined by the same formula, with
$ b $ replaced by $ \sigma $, while
\begin{align}
\label{eq:defRsecond}
R_{r,s}^{i,j,k,\alpha,\beta}(\rho)
&=
\partial_{xx}^2 b\left( X_\rho^{i,N},\mu_\rho^N \right)
\left[
D_r^{k,\beta} X_\rho^{i,N}, D_s^{j,\alpha} X_\rho^{i,N}
\right]
\nonumber\\
&\quad
+
\frac1N \sum_{\ell=1}^N
\partial_\mu\left[ \partial_x b \right]
\left( X_\rho^{i,N},\mu_\rho^N \right)\left( X_\rho^{\ell,N} \right)
\left[
D_r^{k,\beta} X_\rho^{\ell,N}, D_s^{j,\alpha} X_\rho^{i,N}
\right]
\nonumber\\
&\quad
+
\frac1N \sum_{\ell=1}^N
\partial_x\left[
\partial_\mu b\left( \cdot,\mu_\rho^N \right)\left( X_\rho^{\ell,N} \right)
\right]\left( X_\rho^{i,N} \right)
\left[
D_r^{k,\beta} X_\rho^{i,N}, D_s^{j,\alpha} X_\rho^{\ell,N}
\right]
\nonumber\\
&\quad
+
\frac1N \sum_{\ell=1}^N
\partial_v\left[
\partial_\mu b\left( X_\rho^{i,N},\mu_\rho^N \right)
\right]\left( X_\rho^{\ell,N} \right)
\left[
D_r^{k,\beta} X_\rho^{\ell,N}, D_s^{j,\alpha} X_\rho^{\ell,N}
\right]
\nonumber\\
&\quad
+
\frac1{N^2}\sum_{\ell=1}^N \sum_{q=1}^N
\partial_{\mu\mu}^2 b\left( X_\rho^{i,N},\mu_\rho^N \right)
\left( X_\rho^{\ell,N}, X_\rho^{q,N} \right)
\left[
D_r^{k,\beta} X_\rho^{q,N}, D_s^{j,\alpha} X_\rho^{\ell,N}
\right],
\end{align}
and $ S_{r,s}^{i,j,k,\alpha,\beta}(\rho) $ is defined by the same formula, with
$ b $ replaced by $ \sigma $. Finally, there exists a constant $ C > 0 $, independent of
$ r,s,t,\rho,N,i,j,k,\alpha,\beta $, such that, for every $ \rho \in [s,t] $,
\begin{align}
\label{eq:secondRSbound}
\norm{R_{r,s}^{i,j,k,\alpha,\beta}(\rho)}
+
\norm{S_{r,s}^{i,j,k,\alpha,\beta}(\rho)}
&\le
C \Bigg(
\norm{D_r^{k,\beta} X_\rho^{i,N}}
\norm{D_s^{j,\alpha} X_\rho^{i,N}}
+
\norm{D_s^{j,\alpha} X_\rho^{i,N}}
\frac1N \sum_{\ell=1}^N
\norm{D_r^{k,\beta} X_\rho^{\ell,N}}
\nonumber\\
&\quad
+
\norm{D_r^{k,\beta} X_\rho^{i,N}}
\frac1N \sum_{\ell=1}^N
\norm{D_s^{j,\alpha} X_\rho^{\ell,N}}
+
\frac1N \sum_{\ell=1}^N
\norm{D_r^{k,\beta} X_\rho^{\ell,N}}
\norm{D_s^{j,\alpha} X_\rho^{\ell,N}}
\nonumber\\
&\quad
+
\left(
\frac1N \sum_{\ell=1}^N
\norm{D_r^{k,\beta} X_\rho^{\ell,N}}
\right)
\left(
\frac1N \sum_{\ell=1}^N
\norm{D_s^{j,\alpha} X_\rho^{\ell,N}}
\right)
\Bigg).
\end{align}
\end{lemma}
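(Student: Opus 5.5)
The plan is to differentiate the first-order equation \eqref{eq:SDEU-column} of Lemma~\ref{lem:SDEU} once more, now with respect to $D_r^{k,\beta}$, and to collect the terms. The structure parallels the proof of Lemma~\ref{lem:SDEU}. We write
\begin{equation*}
D_s^{j,\alpha}X_u^{i,N}=\sigma\left(X_s^{i,N},\mu_s^N\right)e_\alpha\mathbf 1_{\{i=j\}}+\int_s^u\Theta_\rho\, d\rho+\int_s^u\Sigma_\rho\, dB_\rho^i,
\end{equation*}
where $\Theta_\rho$ and $\Sigma_\rho$ denote the drift and diffusion integrands in \eqref{eq:SDEU-column}. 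We then apply $D_r^{k,\beta}$ to each of the three pieces, with $r\le s$. Differentiability of $D_s^{j,\alpha}X_u^{i,N}$ in the Malliavin sense, i.e.\ membership in $\mathbb D^{1,p}$, follows from the standard theory for SDEs with $C^2$ coefficients having bounded derivatives. Here this is supplied by Assumption~\ref{ass:second-malliavin-uniform}(ii) together with the chain rule on $\R^d\times\Ptwo$ applied to the finite-dimensional map $(x^1,\dots,x^N)\mapsto f(x^i,\frac1N\sum_\ell\delta_{x^\ell})$, which is $C^2$ with bounded derivatives.

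\textbf{The initial term.} By the chain rule,
\begin{equation*}
D_r^{k,\beta}\,\sigma\left(X_s^{i,N},\mu_s^N\right)=\partial_x\sigma\, D_r^{k,\beta}X_s^{i,N}+\frac1N\sum_\ell\partial_\mu\sigma(\cdot)\left(X_s^{\ell,N}\right)D_r^{k,\beta}X_s^{\ell,N}.
\end{equation*}
After multiplication by $e_\alpha\mathbf 1_{\{i=j\}}$, this gives the first and third terms of $\mathcal I$.

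\textbf{The It\^o integral.} The commutation rule $D_r\int_s^u\Sigma_\rho\,dB^i_\rho=\int_s^u D_r\Sigma_\rho\,dB^i_\rho+\Sigma_r e_\beta\mathbf 1_{\{i=k\}}\mathbf 1_{\{r\in[s,u]\}}$ yields a boundary term. Since $r\le s$, this term survives only on the diagonal $r=s$. There $\Sigma_s$ evaluated on $D_s^{j,\alpha}X_s$ gives the second and fourth terms of $\mathcal I$. This diagonal is a $\lambda$-null set and is harmless for the later estimates.

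\textbf{The integrands.} For $D_r^{k,\beta}\Theta_\rho$ we apply the product rule. Differentiating the factors $D_s^{j,\alpha}X_\rho^{i,N}$ and $D_s^{j,\alpha}X_\rho^{\ell,N}$ gives the homogeneous term $\partial_xb\,D_r^{k,\beta}D_s^{j,\alpha}X^{i,N}_\rho$ and the coupling term $G$. Differentiating the coefficients gives $R$:
\begin{itemize}
\item $D_r^{k,\beta}[\partial_xb(X^i_\rho,\mu^N_\rho)]$ produces $\partial^2_{xx}b$ and $\partial_\mu[\partial_xb]$;
\item $D_r^{k,\beta}[\partial_\mu b(X^i_\rho,\mu^N_\rho)(X^\ell_\rho)]$ produces the three terms with $\partial_x[\partial_\mu b]$, $\partial_v[\partial_\mu b]$ and $\partial^2_{\mu\mu}b$.
\end{itemize}
The empirical-measure derivative again contributes the factor $\frac1N\sum_q$. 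The same computation for $\Sigma_\rho$ yields $H$ and $S$.

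\textbf{The bound \eqref{eq:secondRSbound}.} This follows term by term from the bound $M_2$ in Assumption~\ref{ass:second-malliavin-uniform}(ii), using $\norm{T[a,b]}\le\norm T\abs a\abs b$. The five lines of $R$ map to the five products in \eqref{eq:secondRSbound} in order. The last line factorizes because the double sum splits as a product of two single averages.

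The main obstacle is rigorous justification rather than algebra. Specifically, we need to show that $D_s^{j,\alpha}X^{i,N}$ is itself Malliavin differentiable, so that we may interchange $D_r$ with the integrals. I would handle this through the Picard iteration scheme of Nualart (Theorem 2.2.2 in \cite{nualart_malliavin_2006}). This applies to the joint $(X,DX)$ system in $\R^{dN}\times\R^{dNm}$, whose coefficients are Lipschitz in the augmented variables, with uniform $L^p$ bounds on the iterates. Closedness of $D$ then lets us pass to the limit.
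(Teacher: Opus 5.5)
Your proposal is correct and follows the paper's proof essentially step for step: you apply $D_r^{k,\beta}$ to the integral form of the first-order equation, use the chain rule on the initial datum, use the It\^o-integral commutation formula (whose boundary term survives only when $r=s$), use the product and chain rules on $\R^d\times\Ptwo$ for the integrands, and bound $R$ and $S$ term by term via $M_2$. Your added justification of second-order Malliavin differentiability goes beyond the paper, which applies $D_r^{k,\beta}$ formally; note, however, that the augmented $(X,DX)$ system is not globally Lipschitz, because of products like $\partial_x b(X)\,Y$, so what does the work is the inductive linear-equation structure with uniform $L^p$ bounds in the proof of Nualart's Theorem 2.2.2, not a direct Lipschitz argument.
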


\begin{proof}
For $ \rho \in [s,t] $, let
\begin{align*}
A_\rho^{i,j,\alpha}
&=
\partial_x b\left( X_\rho^{i,N},\mu_\rho^N \right)
D_s^{j,\alpha} X_\rho^{i,N}
+
\frac1N \sum_{\ell=1}^N
\partial_\mu b\left( X_\rho^{i,N},\mu_\rho^N \right)\left( X_\rho^{\ell,N} \right)
D_s^{j,\alpha} X_\rho^{\ell,N}
\end{align*}
and
\begin{align*}
C_\rho^{i,j,\alpha}
&=
\partial_x \sigma\left( X_\rho^{i,N},\mu_\rho^N \right)
D_s^{j,\alpha} X_\rho^{i,N}
+
\frac1N \sum_{\ell=1}^N
\partial_\mu \sigma\left( X_\rho^{i,N},\mu_\rho^N \right)\left( X_\rho^{\ell,N} \right)
D_s^{j,\alpha} X_\rho^{\ell,N}.
\end{align*}
Then, Lemma~\ref{lem:SDEU} gives
\begin{equation}
\label{eq:firstcomponentwise}
D_s^{j,\alpha} X_u^{i,N}
=
\mathbf{1}_{\left\{ i=j \right\}}
\sigma\left( X_s^{i,N},\mu_s^N \right)e_\alpha
+
\int_s^u A_\rho^{i,j,\alpha} d\rho
+
\int_s^u C_\rho^{i,j,\alpha} dB_\rho^i.
\end{equation}
Applying $ D_r^{k,\beta} $ to \eqref{eq:firstcomponentwise}, using differentiation under
the Lebesgue integral sign and the commutation formula for the stochastic integral
(because $ r \le s $), we obtain
\begin{align}
\label{eq:secondpre}
D_r^{k,\beta} D_s^{j,\alpha} X_u^{i,N}
&=
\mathbf{1}_{\left\{ i=j \right\}}
D_r^{k,\beta}
\left(
\sigma\left( X_s^{i,N},\mu_s^N \right)e_\alpha
\right)
\nonumber\\
&\quad
+
\mathbf{1}_{\left\{ r=s \right\}}
\mathbf{1}_{\left\{ i=k \right\}}
C_s^{i,j,\alpha} e_\beta
+
\int_s^u D_r^{k,\beta} A_\rho^{i,j,\alpha} d\rho
+
\int_s^u D_r^{k,\beta} C_\rho^{i,j,\alpha} dB_\rho^i.
\end{align}
Applying the Malliavin chain rule to the map
$ (x,\mu) \mapsto \sigma(x,\mu)e_\alpha $ yields the initial condition. It remains to compute
$ D_r^{k,\beta} A_\rho^{i,j,\alpha} $ and $ D_r^{k,\beta}
C_\rho^{i,j,\alpha} $. For the drift term, the product rule gives
\begin{align}
\label{eq:driftprod}
D_r^{k,\beta} A_\rho^{i,j,\alpha}
&=
D_r^{k,\beta}
\left(
\partial_x b\left( X_\rho^{i,N},\mu_\rho^N \right)
\right)
D_s^{j,\alpha} X_\rho^{i,N}
+
\partial_x b\left( X_\rho^{i,N},\mu_\rho^N \right)
D_r^{k,\beta} D_s^{j,\alpha} X_\rho^{i,N}
\nonumber\\
&\quad
+
\frac1N \sum_{\ell=1}^N
D_r^{k,\beta}
\left(
\partial_\mu b\left( X_\rho^{i,N},\mu_\rho^N \right)\left( X_\rho^{\ell,N} \right)
\right)
D_s^{j,\alpha} X_\rho^{\ell,N}
\nonumber\\
&\quad
+
\frac1N \sum_{\ell=1}^N
\partial_\mu b\left( X_\rho^{i,N},\mu_\rho^N \right)\left( X_\rho^{\ell,N} \right)
D_r^{k,\beta} D_s^{j,\alpha} X_\rho^{\ell,N}.
\end{align}
The chain rule on $ \R^d \times \Ptwo $ yields
\begin{align*}
D_r^{k,\beta}
\left(
\partial_x b\left( X_\rho^{i,N},\mu_\rho^N \right)
\right)
D_s^{j,\alpha} X_\rho^{i,N}
&=
\partial_{xx}^2 b\left( X_\rho^{i,N},\mu_\rho^N \right)
\left[
D_r^{k,\beta} X_\rho^{i,N}, D_s^{j,\alpha} X_\rho^{i,N}
\right]
\nonumber\\
&\quad
+
\frac1N \sum_{\ell=1}^N
\partial_\mu\left[ \partial_x b \right]
\left( X_\rho^{i,N},\mu_\rho^N \right)\left( X_\rho^{\ell,N} \right)
\left[
D_r^{k,\beta} X_\rho^{\ell,N}, D_s^{j,\alpha} X_\rho^{i,N}
\right],
\end{align*}
and
\begin{align*}
D_r^{k,\beta}
\left(
\partial_\mu b\left( X_\rho^{i,N},\mu_\rho^N \right)\left( X_\rho^{\ell,N} \right)
\right)
D_s^{j,\alpha} X_\rho^{\ell,N}
&=
\partial_x\left[
\partial_\mu b\left( \cdot,\mu_\rho^N \right)\left( X_\rho^{\ell,N} \right)
\right]\left( X_\rho^{i,N} \right)
\left[
D_r^{k,\beta} X_\rho^{i,N}, D_s^{j,\alpha} X_\rho^{\ell,N}
\right]
\nonumber\\
&\quad
+
\partial_v\left[
\partial_\mu b\left( X_\rho^{i,N},\mu_\rho^N \right)
\right]\left( X_\rho^{\ell,N} \right)
\left[
D_r^{k,\beta} X_\rho^{\ell,N}, D_s^{j,\alpha} X_\rho^{\ell,N}
\right]
\nonumber\\
&\quad
+
\frac1N \sum_{q=1}^N
\partial_{\mu\mu}^2 b\left( X_\rho^{i,N},\mu_\rho^N \right)
\left( X_\rho^{\ell,N}, X_\rho^{q,N} \right)
\left[
D_r^{k,\beta} X_\rho^{q,N}, D_s^{j,\alpha} X_\rho^{\ell,N}
\right].
\end{align*}
Substituting the previous two equalities into \eqref{eq:driftprod}, we find
\begin{equation}
\label{eq:expanddrift}
D_r^{k,\beta} A_\rho^{i,j,\alpha}
=
\partial_x b\left( X_\rho^{i,N},\mu_\rho^N \right)
D_r^{k,\beta} D_s^{j,\alpha} X_\rho^{i,N}
+
G_{r,s}^{i,j,k,\alpha,\beta}(\rho)
+
R_{r,s}^{i,j,k,\alpha,\beta}(\rho).
\end{equation}
The same computation with $ b $ replaced by $ \sigma $ gives
\begin{equation}
\label{eq:expanddiff}
D_r^{k,\beta} C_\rho^{i,j,\alpha}
=
\partial_x \sigma\left( X_\rho^{i,N},\mu_\rho^N \right)
D_r^{k,\beta} D_s^{j,\alpha} X_\rho^{i,N}
+
H_{r,s}^{i,j,k,\alpha,\beta}(\rho)
+
S_{r,s}^{i,j,k,\alpha,\beta}(\rho).
\end{equation}
Inserting \eqref{eq:expanddrift} and \eqref{eq:expanddiff} into \eqref{eq:secondpre}
gives \eqref{eq:secondSDE}. Finally, Assumption~\ref{ass:second-malliavin-uniform} implies that every second
derivative of $ b $ and $ \sigma $ appearing in \eqref{eq:defRsecond} is uniformly bounded.
Hence each term in $ R_{r,s}^{i,j,k,\alpha,\beta}(\rho) $ is bounded, up to a universal
constant, by one of the five quantities appearing on the right-hand side of
\eqref{eq:secondRSbound}. The same holds for
$ S_{r,s}^{i,j,k,\alpha,\beta}(\rho) $. This proves \eqref{eq:secondRSbound}.
\end{proof}

\begin{lemma}
\label{lem:secondSDE-uniform-local}
Assume Assumption~\ref{ass:second-malliavin-uniform}. Let $\hat{\omega}$ be the constant defined via \eqref{eq:def-omega-second-malliavin}. Fix
$ 0 \le r \le s \le u $, $ i,j,k \in \left\{ 1,\dots,N \right\} $, and
$ \alpha,\beta \in \left\{ 1,\dots,m \right\} $. Define, for every $\rho \in [s,u]$,
\begin{equation*}
Y_\rho^{q,j,k,\alpha,\beta}
=
D_r^{k,\beta} D_s^{j,\alpha} X_\rho^{q,N}\quad\mbox{and}\quad
\bar v(\rho)
=
\frac1N \sum_{q=1}^N
\E\left(
\norm{Y_\rho^{q,j,k,\alpha,\beta}}^4
\right)^{1/4}.
\end{equation*}
Then, there exists a constant $ C > 0 $, depending only on the constants in
Assumption~\ref{ass:second-malliavin-uniform}, such that the following bounds hold.

\begin{enumerate}
\item[(i)] The initial condition in Lemma~\ref{lem:secondSDE} satisfies
\begin{align}
\label{eq:second-initial-bound-uniform-local}
&\E\left(\norm{
\mathcal I_{r,s}^{i,j,k,\alpha,\beta}
}^4
\right)^{1/4}
\le
C e^{-\hat{\omega} (s-r)}
\left(
\mathbf{1}_{\left\{ i=j=k \right\}}
+
\frac1N
\left(
\mathbf{1}_{\left\{ i=j \right\}}
+
\mathbf{1}_{\left\{ i=k \right\}}
\right)
+
\frac1{N^2}
\right).
\end{align}

\item[(ii)] For every $ \rho \in [s,u] $,
\begin{equation}
\label{eq:second-GH-bound-uniform-local}
\E\left(
\abs{G_{r,s}^{i,j,k,\alpha,\beta}(\rho)}^4
+
\norm{H_{r,s}^{i,j,k,\alpha,\beta}(\rho)}^4
\right)^{1/4}
\le
2^{1/4}\gamma \bar v(\rho).
\end{equation}

\item[(iii)] For every $ \rho \in [s,u] $,
\begin{align}
\label{eq:second-RS-bound-uniform-local}
&\E\left(
\norm{R_{r,s}^{i,j,k,\alpha,\beta}(\rho)}^4
+
\norm{S_{r,s}^{i,j,k,\alpha,\beta}(\rho)}^4
  \right)^{1/4}\nonumber \\
  &\qquad\qquad\qquad\qquad
\le
C e^{-\hat{\omega} (\rho-r)}
\left(
\mathbf{1}_{\left\{ i=j=k \right\}}
+
\frac1N
\left(
\mathbf{1}_{\left\{ i=j \right\}}
+
\mathbf{1}_{\left\{ i=k \right\}}
+
\mathbf{1}_{\left\{ j=k \right\}}
\right)
+
\frac1{N^2}
\right).
\end{align}
\end{enumerate}
\end{lemma}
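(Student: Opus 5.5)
The plan is to bound each piece term by term with Minkowski, Hölder and Cauchy--Schwarz, using the first-order Malliavin estimate of Proposition~\ref{prop:firstbounds-uniform} with $p=8$ (and $p=4$ where it suffices). That proposition gives
\begin{equation*}
\E\left(\norm{D_s^jX_\rho^{q,N}}^8\right)^{1/8}\le C e^{-\kappa_8(\rho-s)/8}\left(\mathbf 1_{\{q=j\}}+\tfrac1N\mathbf 1_{\{q\ne j\}}\right).
\end{equation*}
Since $\kappa_8/8\ge\hat\omega$ by \eqref{eq:def-omega-second-malliavin}, every first-order factor decays at least like $e^{-\hat\omega(\rho-s)}$ or $e^{-\hat\omega(\rho-r)}$. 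The bound also yields the averaged version
\begin{equation*}
\frac1N\sum_q\E\left(\norm{D_r^kX_\rho^{q,N}}^8\right)^{1/8}\le \frac{C}{N}e^{-\hat\omega(\rho-r)}.
\end{equation*}
The weights $\mathbf 1_{\{\cdot\}}$ and $1/N$ in the statement are obtained by tracking which particle indices coincide.

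\emph{Step (ii).} This is immediate and repeats the proof of \eqref{eq:first-malliavin-forcing-bounds}. Minkowski applied to the average over $\ell$, together with the bound $\gamma$ on $\partial_\mu b$ and $\partial_\mu\sigma$ from \eqref{eq:small-measure-dependence}, gives
\begin{equation*}
\E(|G|^4)^{1/4}\le\gamma\bar v(\rho),
\end{equation*}
and the same bound for $H$. Combining these via $(a^4+b^4)^{1/4}\le 2^{1/4}\max(a,b)$ gives \eqref{eq:second-GH-bound-uniform-local}.

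\emph{Step (i).} Each of the four terms of $\mathcal I$ is a bounded coefficient ($\partial_x\sigma$ or $\partial_\mu\sigma$) times a first derivative evaluated at time $s$.
\begin{itemize}
\item The first term has a factor $\mathbf 1_{\{i=j\}}$ times $\norm{D_r^kX_s^{i,N}}$. Its $L^4$ norm is at most $Ce^{-\hat\omega(s-r)}(\mathbf 1_{\{i=k\}}+1/N)$, so it contributes $\mathbf 1_{\{i=j=k\}}+\frac1N\mathbf 1_{\{i=j\}}$.
\item The third term is $\mathbf 1_{\{i=j\}}$ times an average over $\ell$. By the averaged bound it contributes $\frac CN\mathbf 1_{\{i=j\}}e^{-\hat\omega(s-r)}$.
\item The second and fourth terms carry $\mathbf 1_{\{r=s\}}$, so the exponential factor equals $1$. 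With $D_s^jX_s^{i,N}=\mathbf 1_{\{i=j\}}\sigma e_\alpha$ and $\sigma$ bounded, they contribute $\mathbf 1_{\{i=k\}}(\mathbf 1_{\{i=j\}}+1/N)$.
\end{itemize}
Since $\mathbf 1_{\{i=j=k\}}$ dominates products such as $\mathbf 1_{\{i=j\}}\mathbf 1_{\{i=k\}}$, summing these gives \eqref{eq:second-initial-bound-uniform-local}.

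\emph{Step (iii).} This is the main bookkeeping. I start from \eqref{eq:secondRSbound} and bound the $L^4$ norm of each of the five products by Cauchy--Schwarz, i.e.\ by the product of the two $L^8$ norms. Each $L^8$ norm is controlled by Proposition~\ref{prop:firstbounds-uniform}, giving the factor $e^{-\hat\omega(\rho-r)}e^{-\hat\omega(\rho-s)}\le e^{-\hat\omega(\rho-r)}$.
\begin{itemize}
\item The first product, $\norm{D_rX^i}\norm{D_sX^i}$, gives $(\mathbf 1_{\{i=k\}}+\frac1N)(\mathbf 1_{\{i=j\}}+\frac1N)$. Expanding, this is bounded by $\mathbf 1_{\{i=j=k\}}+\frac1N(\mathbf 1_{\{i=j\}}+\mathbf 1_{\{i=k\}})+\frac1{N^2}$.
\item The second and third products each pair one single-particle factor with an average. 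An average contributes a factor $1/N$ by the averaged bound, so these give $\frac1N\mathbf 1_{\{i=j\}}+\frac1{N^2}$ and $\frac1N\mathbf 1_{\{i=k\}}+\frac1{N^2}$.
\item The fifth product is a product of two averages and gives $1/N^2$.
\end{itemize}

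The fourth product, $\frac1N\sum_\ell\norm{D_rX^\ell}\norm{D_sX^\ell}$, is the only source of $\mathbf 1_{\{j=k\}}$ and is the step I expect to need care. I apply Minkowski in $\ell$ and split the sum. The index $\ell=j=k$ contributes $1/N$ with weight $\mathbf 1_{\{j=k\}}$. When $j\ne k$, the indices $\ell=j$ and $\ell=k$ contribute $\frac1N\cdot1\cdot\frac1N$ each. The remaining $N$ indices contribute $\frac1N\cdot N\cdot\frac1{N^2}$. The total is $\frac1N\mathbf 1_{\{j=k\}}+O(N^{-2})$, as required.

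Summing all five contributions, and using that the norms of the second derivatives of $b$ and $\sigma$ are bounded by $M_2$ from Assumption~\ref{ass:second-malliavin-uniform}, gives \eqref{eq:second-RS-bound-uniform-local}. The constant is uniform in $N$, the indices and the times because every estimate used is.
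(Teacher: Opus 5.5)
Your proposal is correct and follows the paper's proof. Part (ii) comes from the $\gamma$-bound combined with Minkowski, and parts (i) and (iii) reduce, via Minkowski and H\"older, to the first-order estimates of Proposition~\ref{prop:firstbounds-uniform} together with $e^{-\hat{\omega}(\rho-r)}e^{-\hat{\omega}(\rho-s)}\le e^{-\hat{\omega}(\rho-r)}$. Your explicit index bookkeeping, in particular identifying the fourth product as the only source of $\frac1N\mathbf{1}_{\{j=k\}}$, spells out what the paper leaves implicit; like the paper, you tacitly rely on Assumptions~\ref{ass:coeff-level-new}(i)(ii) and~\ref{ass:first-malliavin-uniform}, through that proposition and through $\gamma$, even though the statement names only Assumption~\ref{ass:second-malliavin-uniform}.
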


\begin{proof}
We first prove \eqref{eq:second-GH-bound-uniform-local}. By \eqref{eq:defGsecond},
Minkowski's inequality, and the definition of $ \gamma $,
\begin{align*}
\E\left(
\abs{G_{r,s}^{i,j,k,\alpha,\beta}(\rho)}^4
\right)^{1/4}
&\le
\frac1N \sum_{\ell=1}^N
\E\left(
\abs{
\partial_\mu b\left( X_\rho^{i,N},\mu_\rho^N \right)\left( X_\rho^{\ell,N} \right)
Y_\rho^{\ell,j,k,\alpha,\beta}
}^4
\right)^{1/4}
\\
&\le
\frac{\gamma}{N} \sum_{\ell=1}^N
\E\left(
\norm{Y_\rho^{\ell,j,k,\alpha,\beta}}^4
\right)^{1/4}
=
\gamma \bar v(\rho).
\end{align*}
The same argument yields
\begin{equation*}
\E\left(
\norm{H_{r,s}^{i,j,k,\alpha,\beta}(\rho)}^4
\right)^{1/4}
\le
\gamma \bar v(\rho).
\end{equation*}
Therefore,
\begin{equation*}
\E\left(
\abs{G_{r,s}^{i,j,k,\alpha,\beta}(\rho)}^4
+
\norm{H_{r,s}^{i,j,k,\alpha,\beta}(\rho)}^4
\right)^{1/4}
\le
2^{1/4}\gamma \bar v(\rho).
\end{equation*}
We next prove \eqref{eq:second-initial-bound-uniform-local}. By Minkowski's inequality and the boundedness of
$ \partial_x \sigma $ and $ \partial_\mu \sigma $, we get
\begin{align*}
\E\left(
\norm{
\mathcal I_{r,s}^{i,j,k,\alpha,\beta}
}^4
\right)^{1/4}&\le
C \Biggl(
\mathbf{1}_{\left\{ i=j \right\}}
\E\left(
\norm{D_r^{k,\beta} X_s^{i,N}}^4
\right)^{1/4}
+
\mathbf{1}_{\left\{ i=j \right\}}
\frac1N \sum_{\ell=1}^N
\E\left(
\norm{D_r^{k,\beta} X_s^{\ell,N}}^4
\right)^{1/4}
\\
&\quad
+
\mathbf{1}_{\left\{ r=s \right\}}
\mathbf{1}_{\left\{ i=k \right\}}
\E\left(
\norm{D_s^{j,\alpha} X_s^{i,N}}^4
\right)^{1/4}
+
\mathbf{1}_{\left\{ r=s \right\}}
\mathbf{1}_{\left\{ i=k \right\}}
\frac1N \sum_{\ell=1}^N
\E\left(
\norm{D_s^{j,\alpha} X_s^{\ell,N}}^4
\right)^{1/4}
\Biggr).
\end{align*}
Applying Proposition~\ref{prop:firstbounds-uniform} with $ p = 4 $
proves \eqref{eq:second-initial-bound-uniform-local}. Finally, we prove \eqref{eq:second-RS-bound-uniform-local}. By \eqref{eq:secondRSbound},
\begin{align*}
\norm{R_{r,s}^{i,j,k,\alpha,\beta}(\rho)}
+
\norm{S_{r,s}^{i,j,k,\alpha,\beta}(\rho)}&\le
C \Biggl(
\norm{D_r^{k,\beta} X_\rho^{i,N}}
\norm{D_s^{j,\alpha} X_\rho^{i,N}}
+
\norm{D_s^{j,\alpha} X_\rho^{i,N}}
\frac1N \sum_{\ell=1}^N
\norm{D_r^{k,\beta} X_\rho^{\ell,N}}
\\
&\quad
+
\norm{D_r^{k,\beta} X_\rho^{i,N}}
\frac1N \sum_{\ell=1}^N
  \norm{D_s^{j,\alpha} X_\rho^{\ell,N}}
+
\frac1N \sum_{\ell=1}^N
\norm{D_r^{k,\beta} X_\rho^{\ell,N}}
\norm{D_s^{j,\alpha} X_\rho^{\ell,N}}
\\
&\quad
+
\left(
\frac1N \sum_{\ell=1}^N
\norm{D_r^{k,\beta} X_\rho^{\ell,N}}
\right)
\left(
\frac1N \sum_{\ell=1}^N
\norm{D_s^{j,\alpha} X_\rho^{\ell,N}}
\right)
\Biggr).
\end{align*}
By Minkowski's inequality and H\"older's inequality,
\begin{align*}
&\E\left(
\norm{R_{r,s}^{i,j,k,\alpha,\beta}(\rho)}^4
+
\norm{S_{r,s}^{i,j,k,\alpha,\beta}(\rho)}^4
\right)^{1/4}
\le
C \Biggl(
\E\left(
\norm{D_r^{k,\beta} X_\rho^{i,N}}^8
\right)^{1/8}
\E\left(
\norm{D_s^{j,\alpha} X_\rho^{i,N}}^8
\right)^{1/8}
\\
&
+
\E\left(
\norm{D_s^{j,\alpha} X_\rho^{i,N}}^8
\right)^{1/8}
\frac1N \sum_{\ell=1}^N
\E\left(
\norm{D_r^{k,\beta} X_\rho^{\ell,N}}^8
\right)^{1/8}
+
\E\left(
\norm{D_r^{k,\beta} X_\rho^{i,N}}^8
\right)^{1/8}
\frac1N \sum_{\ell=1}^N
\E\left(
\norm{D_s^{j,\alpha} X_\rho^{\ell,N}}^8
\right)^{1/8}
\\
&
+
\frac1N \sum_{\ell=1}^N
\E\left(
\norm{D_r^{k,\beta} X_\rho^{\ell,N}}^8
\right)^{1/8}
\E\left(
\norm{D_s^{j,\alpha} X_\rho^{\ell,N}}^8
\right)^{1/8}
\\
&
+
\left(
\frac1N \sum_{\ell=1}^N
\E\left(
\norm{D_r^{k,\beta} X_\rho^{\ell,N}}^8
\right)^{1/8}
\right)
\left(
\frac1N \sum_{\ell=1}^N
\E\left(
\norm{D_s^{j,\alpha} X_\rho^{\ell,N}}^8
\right)^{1/8}
\right)
\Biggr).
\end{align*}
Applying Proposition~\ref{prop:firstbounds-uniform} with $ p = 8 $
together with the fact that since $ r \le s \le \rho $, $e^{-\hat{\omega} (\rho-r)} e^{-\hat{\omega} (\rho-s)}
\le
e^{-\hat{\omega} (\rho-r)}$ yields \eqref{eq:second-RS-bound-uniform-local}.
\end{proof}

\begin{proposition}[Uniform-in-time second Malliavin derivative bounds]
\label{prop:secondbounds-uniform}
Assume that Assumptions \ref{ass:smooth},
\ref{ass:second-malliavin-uniform}, \ref{ass:coeff-level-new}(i)(ii),
and \ref{ass:first-malliavin-uniform} hold. Then, there exists a constant
$ C > 0 $ such that, for all $ N \ge 1 $, all $ 0 \le r,s \le u $, all
$ i,j,k \in \left\{ 1,\dots,N \right\} $, and all
$ \alpha,\beta \in \left\{ 1,\dots,m \right\} $,
\begin{equation}
\label{eq:secondbounds-uniform}
\E\left(
\norm{D_r^{k,\beta} D_s^{j,\alpha} X_u^{i,N}}^4
\right)^{1/4}
\le
C e^{-\hat{\omega}\left( u-\min\left\{ r,s \right\} \right)}
\left(
\mathbf{1}_{\left\{ i=j=k \right\}}
+
\frac{1}{N}
\left(
\mathbf{1}_{\left\{ i=j \right\}}
+
\mathbf{1}_{\left\{ i=k \right\}}
+
\mathbf{1}_{\left\{ j=k \right\}}
\right)
+
\frac{1}{N^2}
\right).
\end{equation}
\end{proposition}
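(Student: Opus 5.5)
By symmetry of second Malliavin derivatives in $(r,\beta,k)\leftrightarrow(s,\alpha,j)$, I will assume $r\le s$, so that $\min\{r,s\}=r$ and Lemma~\ref{lem:secondSDE} applies. For $u\ge s$, I write $Y^{i}_\rho=D_r^{k,\beta}D_s^{j,\alpha}X_\rho^{i,N}$ and set
\begin{equation*}
w_i(\rho)=\E\left(\norm{Y^i_\rho}^4\right)^{1/4},\qquad \bar v(\rho)=\frac1N\sum_{q}w_q(\rho).
\end{equation*}
The structure mirrors the proof of Proposition~\ref{prop:firstbounds-uniform}: a linear dissipative estimate for each $i$, followed by a closure argument for the average $\bar v$ using the smallness condition $\eta_4<1$.

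\textbf{Step 1 (per-particle estimate).} Apply Lemma~\ref{lem:linear-dissipative-estimate} with $p=4$, $\kappa=\kappa_4$, $M=M_\sigma$, $A_\rho=\partial_x b(X_\rho^{i,N},\mu_\rho^N)$, $C_\rho=\partial_x\sigma(X_\rho^{i,N},\mu_\rho^N)$, $f=G+R$ and $g=H+S$. Bound the forcing terms using Lemma~\ref{lem:secondSDE-uniform-local}(ii),(iii) and Minkowski. Abbreviating the index weight by $\chi_{ijk}=\mathbf 1_{\{i=j=k\}}+N^{-1}(\mathbf 1_{\{i=j\}}+\mathbf 1_{\{i=k\}}+\mathbf 1_{\{j=k\}})+N^{-2}$, this gives
\begin{align*}
w_i(u)&\le e^{-\kappa_4(u-s)/4}\,\E\left(\norm{\mathcal I^{i,j,k,\alpha,\beta}_{r,s}}^4\right)^{1/4}
+2^{1/4}\Lambda\gamma\left(\int_s^u e^{-\kappa_4(u-\rho)}\bar v(\rho)^4 d\rho\right)^{1/4}\\
&\quad+C\Lambda\left(\int_s^u e^{-\kappa_4(u-\rho)}e^{-4\hat\omega(\rho-r)}d\rho\right)^{1/4}\chi_{ijk}.
\end{align*}
Since $\hat\omega\le\kappa_4/8$, the last integral is at most $C e^{-4\hat\omega(u-r)}$. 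The initial term, by Lemma~\ref{lem:secondSDE-uniform-local}(i) combined with $e^{-\kappa_4(u-s)/4}\le e^{-\hat\omega(u-s)}$, is at most $Ce^{-\hat\omega(u-r)}\chi_{ijk}$.

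\textbf{Step 2 (closure for the average).} Define $M_v=\sup_{\rho\in[s,v]}e^{\hat\omega(\rho-r)}\bar v(\rho)$. Bounding the middle integral as in the first-order proof, and using $\hat\omega\le\kappa_4/8$, yields
\begin{equation*}
w_i(u)\le e^{-\hat\omega(u-r)}\left(C\chi_{ijk}+\eta_4 M_u\right),
\end{equation*}
where $\eta_4<1$ by Assumption~\ref{ass:first-malliavin-uniform}. The factor $(8/(4\kappa_4))^{1/4}$ that defines $\eta_4$ arises exactly as before, because the integrand $e^{-\kappa_4(u-\rho)}e^{-4\hat\omega(\rho-r)}$ is at most $e^{-4\hat\omega(u-r)}e^{-\kappa_4(u-\rho)/2}$.

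Averaging over $i$ requires the average of $\chi_{ijk}$. For fixed $j,k$, one has $\frac1N\sum_i\chi_{ijk}\le C\left(N^{-1}+\mathbf 1_{\{j=k\}}N^{-1}+N^{-2}\right)\le C/N$, since $\mathbf 1_{\{i=j=k\}}$ and $\mathbf 1_{\{i=j\}}/N$ each contribute at most $1/N$ after averaging. Taking the supremum over $u\in[s,v]$ gives $M_v\le C/N+\eta_4M_v$, hence $M_v\le C/((1-\eta_4)N)$.

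\textbf{Step 3 (pointwise bound and main obstacle).} Substituting back gives $w_i(u)\le Ce^{-\hat\omega(u-r)}(\chi_{ijk}+1/N)$. This is too weak: when $j\ne k$ and $i\notin\{j,k\}$, the claimed bound is $N^{-2}$, but the average term contributes only $1/N$. I expect this to be the main obstacle. The remedy is to refine $\bar v$ by splitting it according to the index pattern. First, when $j\ne k$, sharpen the average itself: the diagonal terms $i\in\{j,k\}$ contribute $O(1/N)\cdot\frac1N$, and the remaining terms are self-consistently $O(N^{-2})$. Concretely, rerun the closure on $\bar v$ with forcing $\frac1N\sum_i\chi_{ijk}=O(\mathbf 1_{\{j=k\}}/N+N^{-2})$, which yields $M\le C(\mathbf 1_{\{j=k\}}N^{-1}+N^{-2})$. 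Then feed this back into Step 1 to obtain
\begin{equation*}
w_i(u)\le Ce^{-\hat\omega(u-r)}\left(\chi_{ijk}+\mathbf 1_{\{j=k\}}N^{-1}+N^{-2}\right)\le Ce^{-\hat\omega(u-r)}\chi_{ijk},
\end{equation*}
which is \eqref{eq:secondbounds-uniform}. Justifying that the average of the initial and $R,S$ forcing is $O(N^{-2})$ for $j\ne k$ will require keeping the indicator structure from Proposition~\ref{prop:firstbounds-uniform} explicit, rather than bounding it crudely by $1$.

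Finally, for $u$ with $r\le u<s$ the derivative vanishes, so the bound holds trivially. The case $s<r$ follows by swapping the roles of $(r,k,\beta)$ and $(s,j,\alpha)$, which leaves $\chi_{ijk}$ unchanged.
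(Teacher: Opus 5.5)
Your proposal is correct and follows essentially the same route as the paper. Both apply the $p=4$ linear dissipative estimate with the forcing split via Lemma~\ref{lem:secondSDE-uniform-local}, close the weighted supremum $M_v=\sup_{\rho\in[s,v]}e^{\hat\omega(\rho-r)}\bar v(\rho)$ using $\eta_4<1$, and substitute back. The only difference is your detour through the crude average $C/N$ in Step 2: the paper uses directly the sharp bound $\frac1N\sum_i\chi_{ijk}\le C(\mathbf 1_{\{j=k\}}N^{-1}+N^{-2})$ that you reach in Step 3, and this bound is already supplied by the indicator structure in Lemma~\ref{lem:secondSDE-uniform-local}, so it needs no further justification.
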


\begin{proof}
By symmetry of the Malliavin derivatives, it is enough to consider the case
$ r \le s \le u $. Fix such $ r,s,u $, and fix
$ i,j,k \in \left\{ 1,\dots,N \right\} $ and
$ \alpha,\beta \in \left\{ 1,\dots,m \right\} $. Define, for every $\rho \in [s,u]$,
\begin{equation*}
Y_\rho^i
=
D_r^{k,\beta} D_s^{j,\alpha} X_\rho^{i,N},
\quad
v_i(\rho)
=
\E\left(
\norm{Y_\rho^i}^4
\right)^{1/4}\quad\mbox{and}\quad
\bar v(\rho)
=
\frac1N \sum_{\ell=1}^N v_\ell(\rho).
\end{equation*}
Also set
\begin{equation*}
K_i
=
\mathbf{1}_{\left\{ i=j=k \right\}}
+
\frac{1}{N}
\left(
\mathbf{1}_{\left\{ i=j \right\}}
+
\mathbf{1}_{\left\{ i=k \right\}}
+
\mathbf{1}_{\left\{ j=k \right\}}
\right)
+
\frac{1}{N^2}.
\end{equation*}
By Lemma~\ref{lem:secondSDE}, the process $ Y^i $ satisfies, on $ [s,u] $,
\begin{align*}
Y_u^i
&=
\mathcal I_{r,s}^{i,j,k,\alpha,\beta}
+
\int_s^u
\left(
\partial_x b\left( X_\rho^{i,N},\mu_\rho^N \right) Y_\rho^i
+
G_{r,s}^{i,j,k,\alpha,\beta}(\rho)
+
R_{r,s}^{i,j,k,\alpha,\beta}(\rho)
\right)
d\rho
\\
&\quad
+
\int_s^u
\left(
\partial_x \sigma\left( X_\rho^{i,N},\mu_\rho^N \right) Y_\rho^i
+
H_{r,s}^{i,j,k,\alpha,\beta}(\rho)
+
S_{r,s}^{i,j,k,\alpha,\beta}(\rho)
\right)
dB_\rho^i .
\end{align*}
Applying Lemma~\ref{lem:linear-dissipative-estimate} with
$ p = 4 $, $ \kappa = \kappa_4 $, and $ M = M_\sigma $, we obtain
\begin{align*}
v_i(u)
&\le
e^{-\kappa_4 (u-s)/4}
\E\left(
\norm{\mathcal I_{r,s}^{i,j,k,\alpha,\beta}
}^4
\right)^{1/4}
\nonumber\\
&\quad
+
\Lambda_4
\left(
\int_s^u
e^{-\kappa_4 (u-\rho)}
\E\left(
\abs{
G_{r,s}^{i,j,k,\alpha,\beta}(\rho)
+
R_{r,s}^{i,j,k,\alpha,\beta}(\rho)
}^4
+
\norm{
H_{r,s}^{i,j,k,\alpha,\beta}(\rho)
+
S_{r,s}^{i,j,k,\alpha,\beta}(\rho)
}^4
\right)
d\rho
\right)^{1/4}.
\end{align*}
By Minkowski's inequality in
$ L^4\left( [s,u], e^{-\kappa_4 (u-\rho)} d\rho \right) $, it follows that
\begin{align*}
v_i(u)
&\le
e^{-\kappa_4 (u-s)/4}
\E\left(
\norm{\mathcal I_{r,s}^{i,j,k,\alpha,\beta}
}^4
\right)^{1/4}
\nonumber\\
&\quad
+
\Lambda_4
\left(
\int_s^u
e^{-\kappa_4 (u-\rho)}
\E\left(
\abs{G_{r,s}^{i,j,k,\alpha,\beta}(\rho)}^4
+
\norm{H_{r,s}^{i,j,k,\alpha,\beta}(\rho)}^4
\right)
d\rho
\right)^{1/4}
\nonumber\\
&\quad
+
\Lambda_4
\left(
\int_s^u
e^{-\kappa_4 (u-\rho)}
\E\left(
\norm{R_{r,s}^{i,j,k,\alpha,\beta}(\rho)}^4
+
\norm{S_{r,s}^{i,j,k,\alpha,\beta}(\rho)}^4
\right)
d\rho
\right)^{1/4}.
\end{align*}
By \eqref{eq:second-initial-bound-uniform-local},
\begin{equation*}
e^{-\kappa_4 (u-s)/4}
\E\left(
\norm{\mathcal I_{r,s}^{i,j,k,\alpha,\beta}
}^4
\right)^{1/4}
\le
C e^{-\hat{\omega} (u-r)} K_i,
\end{equation*}
because $ \hat{\omega} \le \kappa_4/8 \le \kappa_4/4 $. By \eqref{eq:second-GH-bound-uniform-local},
\begin{align*}
\Lambda_4
\left(
\int_s^u
e^{-\kappa_4 (u-\rho)}
\E\left(
\abs{G_{r,s}^{i,j,k,\alpha,\beta}(\rho)}^4
+
\norm{H_{r,s}^{i,j,k,\alpha,\beta}(\rho)}^4
\right)
d\rho
\right)^{1/4}
&\le
2^{1/4}\Lambda_4 \gamma
\left(
\int_s^u
e^{-\kappa_4 (u-\rho)}
\bar v(\rho)^4
d\rho
\right)^{1/4}.
\end{align*}
By \eqref{eq:second-RS-bound-uniform-local},
\begin{align*}
\Lambda_4
\left(
\int_s^u
e^{-\kappa_4 (u-\rho)}
\E\left(
\norm{R_{r,s}^{i,j,k,\alpha,\beta}(\rho)}^4
+
\norm{S_{r,s}^{i,j,k,\alpha,\beta}(\rho)}^4
\right)
d\rho
\right)^{1/4}
&\le
C K_i
\left(
\int_s^u
e^{-\kappa_4 (u-\rho)} e^{-4\omega (\rho-r)}
d\rho
\right)^{1/4}
\nonumber\\
&\le
C e^{-\hat{\omega} (u-r)} K_i,
\end{align*}
because $ \kappa_4 - 4\omega \ge \kappa_4/2 > 0 $. Combining the last
four estimates, we obtain
\begin{equation}
\label{eq:proof-prop-second-uniform-3}
v_i(u)
\le
C e^{-\hat{\omega} (u-r)} K_i
+
2^{1/4}\Lambda_4 \gamma
\left(
\int_s^u
e^{-\kappa_4 (u-\rho)}
\bar v(\rho)^4
d\rho
\right)^{1/4}.
\end{equation}
For $ v \in [s,u] $, define $M_v
=
\sup_{\rho \in [s,v]}
e^{\hat{\omega} (\rho-r)} \bar v(\rho)$. Then, for every $\rho \in [s,u]$, $\bar v(\rho)
\le
e^{-\hat{\omega} (\rho-r)} M_u$. Substituting into \eqref{eq:proof-prop-second-uniform-3}, we get
\begin{align*}
v_i(u)
&\le
C e^{-\hat{\omega} (u-r)} K_i
+
2^{1/4}\Lambda_4 \gamma
\left(
\int_s^u
e^{-\kappa_4 (u-\rho)} e^{-4\hat{\omega} (\rho-r)}
d\rho
\right)^{1/4}
M_u
\nonumber\\
&=
C e^{-\hat{\omega} (u-r)} K_i
+
2^{1/4}\Lambda_4 \gamma
e^{-\hat{\omega} (u-r)}
\left(
\int_s^u
e^{-(\kappa_4 - 4\hat{\omega})(u-\rho)}
d\rho
\right)^{1/4}
M_u
\nonumber\\
&\le
C e^{-\hat{\omega} (u-r)} K_i
+
2^{1/4}\Lambda_4 \gamma
\left(
\frac{2}{\kappa_4}
\right)^{1/4}
e^{-\hat{\omega} (u-r)} M_u.
\end{align*}
Set
\begin{equation*}
\eta_4
=
2^{1/4}\Lambda_4 \gamma
\left(
\frac{2}{\kappa_4}
\right)^{1/4}.
\end{equation*}
By Assumption~\ref{ass:first-malliavin-uniform} (with $p=4$ and $p=8$), we have $\eta_4 <
1$. Hence,
\begin{equation}
\label{eq:proof-prop-second-uniform-5}
v_i(u)
\le
C e^{-\hat{\omega} (u-r)} K_i
+
\eta_4 e^{-\hat{\omega} (u-r)} M_u.
\end{equation}
We now average over $ i $. Since
\begin{align*}
\frac1N \sum_{i=1}^N K_i
&=
\frac1N \sum_{i=1}^N \mathbf{1}_{\left\{ i=j=k \right\}}
+
\frac1{N^2}\sum_{i=1}^N
\left(
\mathbf{1}_{\left\{ i=j \right\}}
+
\mathbf{1}_{\left\{ i=k \right\}}
+
\mathbf{1}_{\left\{ j=k \right\}}
\right)
+
\frac1N \sum_{i=1}^N \frac1{N^2}
\\
&\le
C\left(
\frac{\mathbf{1}_{\left\{ j=k \right\}}}{N}
+
\frac1{N^2}
\right),
\end{align*}
averaging \eqref{eq:proof-prop-second-uniform-5} yields
\begin{equation*}
\bar v(u)
\le
C e^{-\hat{\omega} (u-r)}
\left(
\frac{\mathbf{1}_{\left\{ j=k \right\}}}{N}
+
\frac1{N^2}
\right)
+
\eta_4 e^{-\hat{\omega} (u-r)} M_u.
\end{equation*}
Taking the supremum over $ u \in [s,v] $, we obtain
\begin{equation*}
M_v
\le
C\left(
\frac{\mathbf{1}_{\left\{ j=k \right\}}}{N}
+
\frac1{N^2}
\right)
+
\eta_4 M_v.
\end{equation*}
Since $ \eta_4 < 1 $, it follows that for every $v \in [s,u]$,
\begin{equation}
\label{eq:proof-prop-second-uniform-6}
M_v
\le
C\left(
\frac{\mathbf{1}_{\left\{ j=k \right\}}}{N}
+
\frac1{N^2}
\right).
\end{equation}
Substituting \eqref{eq:proof-prop-second-uniform-6} back into
\eqref{eq:proof-prop-second-uniform-5}, we find
\begin{align*}
v_i(u)
&\le
C e^{-\hat{\omega} (u-r)}
\left[
K_i
+
\frac{\mathbf{1}_{\left\{ j=k \right\}}}{N}
+
\frac1{N^2}
\right]
\\
&\le
C e^{-\hat{\omega} (u-r)}
\left(
\mathbf{1}_{\left\{ i=j=k \right\}}
+
\frac{1}{N}
\left(
\mathbf{1}_{\left\{ i=j \right\}}
+
\mathbf{1}_{\left\{ i=k \right\}}
+
\mathbf{1}_{\left\{ j=k \right\}}
\right)
+
\frac1{N^2}
\right).
\end{align*}
This proves \eqref{eq:secondbounds-uniform} in the case $ r \le s \le u $.
The general case follows by symmetry in $ r $ and $ s $.
\end{proof}

\subsection{Proof of Proposition \ref{prop:VidottoFunctional}}\label{S:UniformControlVidotto}

In this section, we provide the proof of Proposition \ref{prop:VidottoFunctional}. Lemma \ref{lem:Fbounds} that we present next plays an instrumental role in that proof, providing uniform-in-time bounds for the Malliavin derivatives of $F_t^N\left(\varphi\right)$. To obtain these bounds, we make use of the uniform-in-time bounds for the first- and second-order Malliavin derivatives of the flow as obtained in Proposition~\ref{prop:firstbounds-uniform} and
Proposition~\ref{prop:secondbounds-uniform}.
\begin{lemma}[Uniform bounds for the Malliavin derivatives of $F_t^N\left(\varphi\right)$]
\label{lem:Fbounds}
Assume that Assumptions \ref{ass:smooth},
\ref{ass:second-malliavin-uniform}, \ref{ass:coeff-level-new}(i)(ii),
and \ref{ass:first-malliavin-uniform} hold, let $ \hat{\omega} $ be given by
\eqref{eq:def-omega-second-malliavin}, and let
$\varphi\in C_b^2\left(\R^d\right)$.
Then, there exists a constant $C>0$, depending only on
$\omega$, $m$, and $\varphi$, such that for all $N\ge 1$, all
$0 \le s,r \le t$, all $j,k\in\left\{1,\dots,N\right\}$ and all
$\alpha,\beta\in\left\{1,\dots,m\right\}$,
\begin{equation}
\label{eq:DFboundfinal}
\E\left(
\abs{D_s^{j,\alpha}F_t^N\left(\varphi\right)}^4
\right)^{\frac{1}{4}}
\le
\frac{C}{\sqrt{N}} e^{-\hat{\omega}\left( t-s \right)},
\end{equation}
and
\begin{equation}
\label{eq:D2Fboundfinal}
\E\left(
\abs{D_r^{k,\beta}D_s^{j,\alpha}F_t^N\left(\varphi\right)}^4
\right)^{\frac{1}{4}}
\le
\frac{C}{\sqrt{N}}
e^{-\hat{\omega}\left( t-\min\left\{ r,s \right\} \right)}
\left(
\mathbf{1}_{\left\{j=k\right\}}+\frac1N
\right).
\end{equation}
\end{lemma}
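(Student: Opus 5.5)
The plan is to differentiate $F_t^N(\varphi)$ with the Malliavin chain rule, reduce every term to derivatives of the particle flow, and then insert the uniform bounds of Propositions~\ref{prop:firstbounds-uniform} and~\ref{prop:secondbounds-uniform}. Since $\E(\varphi(X_t^{i,N}))$ is deterministic, its Malliavin derivative vanishes. Hence
\begin{equation*}
D_s^{j,\alpha}F_t^N(\varphi)=\frac{1}{\sqrt N}\sum_{i=1}^N \nabla\varphi\left(X_t^{i,N}\right)\cdot D_s^{j,\alpha}X_t^{i,N},
\end{equation*}
and
\begin{equation*}
D_r^{k,\beta}D_s^{j,\alpha}F_t^N(\varphi)=\frac{1}{\sqrt N}\sum_{i=1}^N\Big(\nabla^2\varphi\left(X_t^{i,N}\right)\left[D_r^{k,\beta}X_t^{i,N},D_s^{j,\alpha}X_t^{i,N}\right]+\nabla\varphi\left(X_t^{i,N}\right)\cdot D_r^{k,\beta}D_s^{j,\alpha}X_t^{i,N}\Big).
\end{equation*}
Membership $F_t^N(\varphi)\in\mathbb D^{2,4}$ follows from the same formulas together with the finite moments of the flow derivatives on $[0,t]$ and $\varphi\in C_b^2$. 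The boundedness of $\nabla\varphi$ and $\nabla^2\varphi$ lets us pull out sup-norms.

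For the first derivative, apply Minkowski's inequality in $L^4(\Omega)$ to the sum:
\begin{equation*}
\E\left(\abs{D_s^{j,\alpha}F_t^N}^4\right)^{1/4}\le \frac{\norm{\nabla\varphi}_\infty}{\sqrt N}\sum_{i=1}^N \E\left(\norm{D_s^jX_t^{i,N}}^4\right)^{1/4}.
\end{equation*}
Proposition~\ref{prop:firstbounds-uniform} with $p=4$ gives that the $i=j$ term is $O(e^{-\kappa_4(t-s)/8})$. The remaining $N-1$ terms are each $O(N^{-1}e^{-\kappa_4(t-s)/8})$, so the whole sum is bounded by $Ce^{-\kappa_4(t-s)/8}\le Ce^{-\hat\omega(t-s)}$, since $\hat\omega\le\kappa_4/8$. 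This yields \eqref{eq:DFboundfinal}.

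For the second derivative, again use Minkowski's inequality. Assume without loss of generality $r\le s$, which is allowed by symmetry.
\begin{itemize}
\item \textbf{Product term.} Apply Cauchy--Schwarz to pass to $L^8$ norms and use Proposition~\ref{prop:firstbounds-uniform} with $p=8$. Each summand is bounded by
\begin{equation*}
Ce^{-\kappa_8(t-r)/8}e^{-\kappa_8(t-s)/8}\left(\mathbf 1_{\{i=k\}}+N^{-1}\right)\left(\mathbf 1_{\{i=j\}}+N^{-1}\right).
\end{equation*}
Summing over $i$ gives $C(\mathbf 1_{\{j=k\}}+N^{-1})$ times the exponential. This is the step that produces the key indicator: only $i=j=k$ contributes order one.
\item \textbf{Second-derivative term.} Use Proposition~\ref{prop:secondbounds-uniform}. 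Summing its bound over $i$ gives
\begin{equation*}
\mathbf 1_{\{j=k\}}+\frac{1}{N}\left(1+1+N\mathbf 1_{\{j=k\}}\right)+\frac{N}{N^2}\le C\left(\mathbf 1_{\{j=k\}}+\frac{1}{N}\right),
\end{equation*}
with decay factor $e^{-\hat\omega(t-\min\{r,s\})}$.
\end{itemize}
Multiplying by $N^{-1/2}$ yields \eqref{eq:D2Fboundfinal}.

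The main obstacle is the exponential bookkeeping in the product term. There, I use $e^{-\kappa_8(t-s)/8}\le1$ and $\kappa_8/8\ge\hat\omega$ to reduce to $e^{-\hat\omega(t-\min\{r,s\})}$. I also need to check that the combinatorics of the indicator sums really produce $\mathbf 1_{\{j=k\}}+1/N$ rather than a constant, and in particular that the $\frac1N\mathbf 1_{\{j=k\}}$ term in Proposition~\ref{prop:secondbounds-uniform}, summed over all $N$ values of $i$, is absorbed into $\mathbf 1_{\{j=k\}}$.
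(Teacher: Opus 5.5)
Your proposal is correct and follows essentially the same route as the paper. The common steps are: the chain and product rules for $F_t^N(\varphi)$, Minkowski plus Cauchy--Schwarz in $L^4$, then Proposition~\ref{prop:firstbounds-uniform} (with $p=4$ and $p=8$) and Proposition~\ref{prop:secondbounds-uniform}, with the same indicator sums over $i$ and the same use of $\hat{\omega}\le\kappa_p/8$ to reduce all exponentials to $e^{-\hat{\omega}(t-\min\{r,s\})}$.
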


\begin{proof}
We begin with the first derivative. By the Malliavin chain rule,
\begin{equation*}
D_s^{j,\alpha}F_t^N\left(\varphi\right)
=
\frac1{\sqrt{N}}\sum_{i=1}^N
\nabla\varphi\left(X_t^{i,N}\right)\cdot D_s^{j,\alpha}X_t^{i,N}.
\end{equation*}
Since $\nabla\varphi$ is bounded, Minkowski's inequality and
Proposition~\ref{prop:firstbounds-uniform} with $p=4$ give
\begin{align*}
\E\left(\abs{D_s^{j,\alpha}F_t^N\left(\varphi\right)}^4\right)^{\frac{1}{4}}
&\le
\frac{\norm{\nabla\varphi}_\infty}{\sqrt{N}}
\sum_{i=1}^N
\E\left(\norm{D_s^{j,\alpha}X_t^{i,N}}^4\right)^{\frac{1}{4}}
\\
&\le
\frac{C}{\sqrt{N}} e^{-\hat{\omega}\left( t-s \right)}
\sum_{i=1}^N
\left(
\mathbf{1}_{\left\{i=j\right\}}+\frac1N\mathbf{1}_{\left\{i\ne j\right\}}
\right)
\le
\frac{C}{\sqrt{N}} e^{-\hat{\omega}\left( t-s \right)},
\end{align*}
which proves \eqref{eq:DFboundfinal}. For the second derivative, by commutativity of Malliavin derivatives,
\begin{align*}
D_r^{k,\beta}D_s^{j,\alpha}F_t^N\left(\varphi\right)
&=
\frac1{\sqrt{N}}\sum_{i=1}^N
\nabla\varphi\left(X_t^{i,N}\right)\cdot
D_r^{k,\beta}D_s^{j,\alpha}X_t^{i,N}
\\
&\quad
+
\frac1{\sqrt{N}}\sum_{i=1}^N
\left(D_r^{k,\beta}X_t^{i,N}\right)^\top
\nabla^2\varphi\left(X_t^{i,N}\right)
D_s^{j,\alpha}X_t^{i,N}.
\end{align*}
Hence, using boundedness of $\nabla\varphi$ and $\nabla^2\varphi$,
Minkowski's inequality, H\"older's inequality,
Proposition~\ref{prop:firstbounds-uniform} with $p=8$, and
Proposition~\ref{prop:secondbounds-uniform}, we obtain
\begin{align*}
&\E\left(
\abs{D_r^{k,\beta}D_s^{j,\alpha}F_t^N\left(\varphi\right)}^4
\right)^{\frac{1}{4}}
\\
&\le
\frac{C}{\sqrt{N}}
\sum_{i=1}^N
\E\left(
\norm{D_r^{k,\beta}D_s^{j,\alpha}X_t^{i,N}}^4
\right)^{\frac{1}{4}}
+
\frac{C}{\sqrt{N}}
\sum_{i=1}^N
\E\left(
\norm{D_r^{k,\beta}X_t^{i,N}}^8
\right)^{\frac{1}{8}}
\E\left(
\norm{D_s^{j,\alpha}X_t^{i,N}}^8
\right)^{\frac{1}{8}}
\\
&\le
\frac{C}{\sqrt{N}}
e^{-\hat{\omega}\left( t-\min\left\{ r,s \right\} \right)}
\sum_{i=1}^N
\left(
\mathbf{1}_{\left\{i=j=k\right\}}
+\frac1N
\left(
\mathbf{1}_{\left\{i=j\right\}}
+\mathbf{1}_{\left\{i=k\right\}}
+\mathbf{1}_{\left\{j=k\right\}}
\right)
+\frac1{N^2}
\right)
\\
&\quad
+
\frac{C}{\sqrt{N}}
e^{-\hat{\omega}\left( t-r \right)}e^{-\hat{\omega}\left( t-s \right)}
\sum_{i=1}^N
\left(
\mathbf{1}_{\left\{i=k\right\}}+\frac1N
\right)
\left(
\mathbf{1}_{\left\{i=j\right\}}+\frac1N
\right).
\end{align*}
Since
\begin{equation*}
\sum_{i=1}^N
\left(
\mathbf{1}_{\left\{i=j=k\right\}}
+\frac1N
\left(
\mathbf{1}_{\left\{i=j\right\}}
+\mathbf{1}_{\left\{i=k\right\}}
+\mathbf{1}_{\left\{j=k\right\}}
\right)
+\frac1{N^2}
\right)
\le
C\left( \mathbf{1}_{\left\{j=k\right\}}+\frac1N \right)
\end{equation*}
and
\begin{equation*}
\sum_{i=1}^N
\left(
\mathbf{1}_{\left\{i=k\right\}}+\frac1N
\right)
\left(
\mathbf{1}_{\left\{i=j\right\}}+\frac1N
\right)
\le
C\left( \mathbf{1}_{\left\{j=k\right\}}+\frac1N \right),
\end{equation*}
while $e^{-\hat{\omega}\left( t-r \right)}e^{-\hat{\omega}\left( t-s \right)}
\le
e^{-\hat{\omega}\left( t-\min\left\{ r,s \right\} \right)}$, we obtain \eqref{eq:D2Fboundfinal}.
\end{proof}

\begin{proof}[Proof of Proposition \ref{prop:VidottoFunctional}]
Fix $ t \ge 0 $, $x=\left(s,j,\alpha\right)\in A_t$ and
$y=\left(r,k,\beta\right)\in A_t$.
By definition of the first contraction on
$\mathfrak H_t=L^2\left(A_t,\lambda_t\right)$,
\begin{equation*}
\left(
D^2F_t^N\left(\varphi\right)\otimes_1
D^2F_t^N\left(\varphi\right)
\right)\left(x,y\right)
=
\int_0^t\sum_{\ell=1}^N\sum_{\gamma=1}^m
D_u^{\ell,\gamma}D_s^{j,\alpha}F_t^N\left(\varphi\right)
D_u^{\ell,\gamma}D_r^{k,\beta}F_t^N\left(\varphi\right)
du.
\end{equation*}
Applying Minkowski's inequality and H\"older's inequality gives
\begin{align*}
&\sqrt{
\E\left(
\left(
D^2F_t^N\left(\varphi\right)\otimes_1
D^2F_t^N\left(\varphi\right)
\right)\left(x,y\right)^2
\right)
}
\\
&\qquad\qquad\qquad\qquad\qquad \le
\int_0^t\sum_{\ell=1}^N\sum_{\gamma=1}^m
\E\left(
\abs{
D_u^{\ell,\gamma}D_s^{j,\alpha}F_t^N\left(\varphi\right)
}^4
\right)^{\frac{1}{4}}
\E\left(
\abs{
D_u^{\ell,\gamma}D_r^{k,\beta}F_t^N\left(\varphi\right)
}^4
\right)^{\frac{1}{4}}
du.
\end{align*}
Using \eqref{eq:D2Fboundfinal}, we obtain
\begin{align*}
&\sqrt{
\E\left(
\left(
D^2F_t^N\left(\varphi\right)\otimes_1
D^2F_t^N\left(\varphi\right)
\right)\left(x,y\right)^2
\right)
}
\\
&\qquad\qquad\qquad \le
\frac{C}{N}
\int_0^t
e^{-\hat{\omega}\left( t-\min\left\{ u,s \right\} \right)}
e^{-\hat{\omega}\left( t-\min\left\{ u,r \right\} \right)}
du
\sum_{\ell=1}^N
\left(
\mathbf{1}_{\left\{\ell=j\right\}}+\frac1N
\right)
\left(
\mathbf{1}_{\left\{\ell=k\right\}}+\frac1N
\right)
\\
&\qquad\qquad\qquad \le
\frac{C}{N}
\left(
\mathbf{1}_{\left\{j=k\right\}}+\frac1N
\right)
\int_0^t
e^{-\hat{\omega}\left( t-\min\left\{ u,s \right\} \right)}
e^{-\hat{\omega}\left( t-\min\left\{ u,r \right\} \right)}
du.
\end{align*}
Assume without loss of generality that $ r \le s $. Splitting the
integral over $ \left[ 0,r \right] $, $ \left[ r,s \right] $, and $
\left[ s,t \right] $, we get
\begin{align*}
\int_0^t
e^{-\hat{\omega}\left( t-\min\left\{ u,s \right\} \right)}
e^{-\hat{\omega}\left( t-\min\left\{ u,r \right\} \right)}
du
&=
\int_0^r e^{-2\hat{\omega}\left( t-u \right)} du
+
\int_r^s e^{-\hat{\omega}\left( t-u \right)}e^{-\hat{\omega}\left( t-r \right)}
  du\\
  &\quad
+
\int_s^t e^{-\hat{\omega}\left( t-s \right)}e^{-\hat{\omega}\left( t-r \right)} du
\\
&\le
C e^{-2\hat{\omega}\left( t-r \right)}
+
C e^{-\hat{\omega}\left( 2t-s-r \right)}
+
\left( t-s \right)e^{-\hat{\omega}\left( 2t-s-r \right)}
\\
&\le
C\left( 1+t-s \right)e^{-\hat{\omega}\left( 2t-s-r \right)}.
\end{align*}
Since the last bound is symmetric in $ r $ and $ s $, we may write
\begin{equation*}
\int_0^t
e^{-\hat{\omega}\left( t-\min\left\{ u,s \right\} \right)}
e^{-\hat{\omega}\left( t-\min\left\{ u,r \right\} \right)}
du
\le
C\left( 1+\min\left\{ t-s,t-r \right\} \right)
e^{-\hat{\omega}\left( 2t-s-r \right)}.
\end{equation*}
Therefore,
\begin{equation*}
\sqrt{
\E\left(
\left(
D^2F_t^N\left(\varphi\right)\otimes_1
D^2F_t^N\left(\varphi\right)
\right)\left(x,y\right)^2
\right)
}
\le
\frac{C}{N}
\left(
\mathbf{1}_{\left\{j=k\right\}}+\frac1N
\right)
\left( 1+\min\left\{ t-s,t-r \right\} \right)
e^{-\hat{\omega}\left( 2t-s-r \right)}.
\end{equation*}
On the other hand, \eqref{eq:DFboundfinal} gives
\begin{align*}
\sqrt{
\E\left(
DF_t^N\left(\varphi\right)\left(x\right)^2
DF_t^N\left(\varphi\right)\left(y\right)^2
\right)
}
&\le
\E\left(
\abs{DF_t^N\left(\varphi\right)\left(x\right)}^4
\right)^{\frac{1}{4}}
\E\left(
\abs{DF_t^N\left(\varphi\right)\left(y\right)}^4
\right)^{\frac{1}{4}}
\le
\frac{C}{N}e^{-\hat{\omega}\left( 2t-s-r \right)}.
\end{align*}
Multiplying the last two bounds yields
\begin{align*}
&\sqrt{
\E\left(
\left(
D^2F_t^N\left(\varphi\right)\otimes_1
D^2F_t^N\left(\varphi\right)
\right)\left(x,y\right)^2
\right)
}
\sqrt{
\E\left(
DF_t^N\left(\varphi\right)\left(x\right)^2
DF_t^N\left(\varphi\right)\left(y\right)^2
\right)
}
\\
&\qquad\qquad\qquad\qquad\qquad\qquad\qquad\quad \le
\frac{C}{N^2}
\left(
\mathbf{1}_{\left\{j=k\right\}}+\frac1N
\right)
\left( 1+\min\left\{ t-s,t-r \right\} \right)
e^{-2\hat{\omega}\left( 2t-s-r \right)}.
\end{align*}
Integrating over $ x $ and $ y $, and using the change of variables
$ a=t-s $ and $ b=t-r $, we obtain
\begin{align*}
\Delta_{N,t}
&\le
\frac{C}{N^2}
\sum_{j=1}^N\sum_{k=1}^N
\left(
\mathbf{1}_{\left\{j=k\right\}}+\frac1N
\right)
\int_0^t\int_0^t
\left( 1+\min\left\{ a,b \right\} \right)
e^{-2\hat{\omega}\left( a+b \right)}
da db
\\
&\le
\frac{C}{N^2}
\sum_{j=1}^N\sum_{k=1}^N
\left(
\mathbf{1}_{\left\{j=k\right\}}+\frac1N
\right)
\int_0^\infty\int_0^\infty
\left( 1+\min\left\{ a,b \right\} \right)
e^{-2\hat{\omega}\left( a+b \right)}
da db.
\end{align*}
The double integral above is finite and depends only on $ \hat{\omega} $. Since
\begin{equation*}
\sum_{j=1}^N\sum_{k=1}^N \mathbf{1}_{\left\{j=k\right\}} = N
\qquad \text{and} \qquad
\sum_{j=1}^N\sum_{k=1}^N \frac1N = N,
\end{equation*}
we conclude that
\begin{equation*}
\Delta_{N,t}\le \frac{C}{N},
\end{equation*}
with $ C $ independent of $ t $. Taking the supremum over $ t \ge 0 $ proves
\eqref{eq:GammaBound}.
\end{proof}

\bibliographystyle{plain}

\end{document}